\newtheorem{thm}{Theorem}[section]
\newtheorem{prop}[thm]{Proposition}
\newtheorem{claim}[thm]{Claim}
\newtheorem{lemma}[thm]{Lemma}
\newtheorem{cor}[thm]{Corollary}
\newtheorem{conj}[thm]{Conjecture}
\newcommand{\C}[1]{\mathcal{#1}}
\renewcommand{\O}[1]{\overline{#1}}
\newcommand{\GR}[1]{\mathrm{GR}(#1)}
\newcommand{\cG}[2]{{\mathcal G}^{(#1)}_{#2}}
\def\ex{\mathrm{ex}}
\newcommand{\p}[2]{P_{#1}(#2)}
\newcommand{\pp}[3]{P_{\,\O{#1}#2}(#3)}
\newcommand{\CI}{C}
\newcommand{\I}[1]{{\mathbbm #1}}
\newcommand{\OutIn}[2]{$\ifthenelse{\equal{#1}{}}{}{\O #1}#2$}
\newcommand{\OneClaim}{$1$-claim}
\newcommand{\OMerge}[2]{\ifthenelse{\equal{#1}{}}{(#2)}{(#1|#2)}}
\newcommand{\Merge}[2]{\ifthenelse{\equal{#1}{}}{#2}{#1|#2}}
\newcommand{\cluster}{cluster}
\newcommand{\OneClusters}{\mathcal{M}_{\Merge{}{1}}}
\newcommand{\OneCluster}{$\Merge{}{1}$-\cluster}
\newcommand{\TwoCluster}{$\Merge{}{2}$-\cluster}
\newcommand{\hide}[1]{}
\newcommand{\op}[1]{\textcolor{blue}{OP: #1}}
\newcommand{\sm}[1]{\textcolor{orange}{SS: #1}}
\title{\bf\Large On the quadratic 8-edge case of\\  the Brown--Erd\H{o}s--S\'os problem}
\author{Oleg Pikhurko and Shumin Sun\\
Mathematics Institute and DIMAP\\
University of Warwick\\
Coventry CV4 7AL, UK}
\date{}
\begin{document}

\maketitle
\begin{abstract}
    Let $f^{(r)}(n;s,k)$ be the maximum number of edges in an $n$-vertex $r$-uniform hypergraph containing no  $k$ edges on at most $s$ vertices. Brown, Erd\H{o}s and S\'os  conjectured in 1973 that the limit $\lim_{n\rightarrow \infty}n^{-2}f^{(3)}(n;k+2,k)$ exists for all $k$. Recently, Delcourt and Postle settled the conjecture and their approach was generalised by Shangguan to every uniformity $r\ge 4$: the limit $\lim_{n\rightarrow \infty}n^{-2}f^{(r)}(n;rk-2k+2,k)$ exists for all $r\ge 3$ and $k\ge 2$. 
    
    The value of the limit is currently known for $k\in \{2,3,4,5,6,7\}$ due to various results authored by Glock, Joos, Kim, K\"{u}hn, Lichev, Pikhurko, R\"odl and Sun. In this paper we consider the case $k=8$, determining the value of the limit for each $r\ge 4$ and presenting a lower bound for $r=3$ that we conjecture to be sharp.

\end{abstract}
\section{Introduction}

 For an integer $r\geq 2$, an \textit{$r$-uniform hypergraph} (in short, \textit{$r$-graph}) $H$ consists of a vertex set $V(H)$ and an edge set $E(H)\subseteq \binom{V(H)}{r}$, that is, every edge is an $r$-element subset of $V(H)$. Given a family $\C F$ of $r$-graphs, the \emph{Tur\'an number} of $\C F$, denoted by $\ex(n;\C F)$, is defined as the maximum number of edges in an $n$-vertex $r$-graph containing no element of $\C F$ as a subgraph. 

In this paper, we focus on the family $\C F^{(r)}(s,k)$, consisting of all $r$-graphs with $k$ edges and at most $s$ vertices. Brown, Erd\H{o}s and S\'os~\cite{71BES} initiated the systematic investigation of the function
$$f^{(r)}(n;s,k):=\ex(n;\C F^{(r)}(s,k)).$$
They showed that
$$
\Omega(n^{(rk-s)/(k-1)})=f^{(r)}(n;s,k)=O(n^{\lceil (rk-s)/(k-1)\rceil}).
$$
If  $t:=(rk-s)/(k-1)$ is an interger, i.e. $s=rk-tk+t$, then  $f^{(r)}(n;rk-tk+t,k)=\Theta (n^t)$. In the sequel, we are mainly interested in the case when $t=2$; thus $s=rk-2k+2$ and the magnitude of the function is $\Theta (n^2)$. A natural question here is whether the limit
$$\pi(r,k):=\lim_{n\to \infty}n^{-2}f^{(r)}(n;rk-2k+2,k)$$ 
exists, which was originally conjectured by Brown, Erd\H{o}s and S\'os for $r=3$.

In their initial paper~\cite{71BES}, Brown, Erd\H{o}s and S\'os confirmed the conjecture for $k=2$ by observing that $\pi(3,2)=1/6$. 
Many years later, Glock~\cite{19G} solved the case $k=3$ and showed that $\pi(3,3)=1/5$. In a recent work, Glock, Joos, Kim, K\"{u}hn, Lichev and Pikhurko~\cite{GlockJoosKimKuhnLichevPikhurko24} proved the case $k=4$ by showing that $\pi(3,4)=7/36$. Building on their work, Delcourt and Postle~\cite{DelcourtPostle24} finally resolved the Brown--Erd\H{o}s--S\'os conjecture, namely, $\pi(3,k)$ exists for any $k\ge 2$, without determining its value. 

For more general case $\pi(r,k)$ with uniformity $r\ge 4$, the existence of the limit  (without explicit value) was shortly confirmed by Shangguan~\cite{23sh}, following the approach of Delcourt and Postle. The natural remaining question is to determine the limits. Recently, a range of results regarding this direction has been established. Apart from values for $r=3$ mentioned above, the celebrated work of R\"odl~\cite{Rodl85} on the existence of approximate Steiner systems implies that $\pi(r,2)=\frac{1}{r^2-r}$ for every $r\ge 3$. Moreover, Glock, Joos, Kim, K\"{u}hn, Lichev and Pikhurko~\cite{GlockJoosKimKuhnLichevPikhurko24} proved that for every $r\ge 3$,
$$\pi(r,3)=\frac{1}{r^2-r-1}\quad \text{and}\quad \pi(r,4)=\frac{1}{r^2-r}.$$
Very recently, Glock, Kim, Lichev, Pikhurko and Sun~\cite{GKLPS24} obtained the limits for $k\in \{5,7\}$, which is same as $k=3$.
$$\pi(r,5)=\pi(r,7)=\frac{1}{r^2-r-1}\quad \text{for every $r\ge 3$.}$$
They also resolved the case $k=6$. Curiously, in this case, the value behaves differently when $r=3$ and $r\ge 4$ as follows:
$$\pi(3,6)=\frac{61}{330}\quad \text{and\quad $\pi(r,6)=\frac{1}{r^2-r}$ for every $r\ge 4$.}$$
Meanwhile, Letzter and Sgueglia~\cite{25LS} provided the exact value
\begin{equation}\label{eq:evenlimit}
\pi(r,k)=\frac{1}{r^2-r}
\end{equation}
for even integer $k$ and $r\ge r_0(k)$ sufficiently large. In their paper, they asked for the smallest $r$ such that (\ref{eq:evenlimit}) holds.

In this paper, we determine the limit for $k=8$ and $r\ge 4$ as follows.

\begin{thm}\label{thm:main}
    For every $r\ge 4$, we have $\pi(r,8)=\frac{1}{r^2-r}.$
\end{thm}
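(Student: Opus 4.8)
We prove the two inequalities $\pi(r,8)\le\frac1{r^2-r}$ and $\pi(r,8)\ge\frac1{r^2-r}$ separately, extending the approach used for $k\le 7$ in~\cite{GlockJoosKimKuhnLichevPikhurko24,GKLPS24}. For a configuration (sub-$r$-graph) $C$ we write $\mathrm{exc}(C):=r\,e(C)-v(C)=\sum_{x\in V(C)}(\deg_C(x)-1)$ for its \emph{excess}, so that a set of $8$ edges spans at most $8r-14$ vertices exactly when its excess is at least $14$. We also use freely that $\frac1{r^2-r}$ is the asymptotic edge density of a partial Steiner system: a linear $r$-graph --- one in which any two edges meet in at most one vertex --- on $n$ vertices has at most $\binom n2/\binom r2$ edges.

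\emph{Lower bound.} The plan is to build, for each large $n$, a linear $r$-graph with $(1-o(1))\frac{n^2}{r^2-r}$ edges having no $8$ edges on at most $8r-14$ vertices. Since the host is linear, the inequality $\sum_{x}\binom{\deg_C(x)}{2}\le\binom{e(C)}{2}$ severely restricts any $8$-edge sub-configuration $C$ of excess at least $14$: one checks that $v(C)\le 8r-14$, and that $C$ lies in the finite family $\C C$ of all $r$-graphs with $8$ edges and at most $8r-14$ vertices, each member being either ``critical'' ($\mathrm{exc}(C)=14$, so $v(C)=8r-14$; the simplest instance is the $K_4$-configuration already relevant for $k=4$) or ``supercritical'' ($\mathrm{exc}(C)\ge 15$, equivalently $v(C)\le 8r-15$). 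Supercritical configurations appear $o(n^2)$ times in a random-like partial Steiner system and are routinely forbidden; the finitely many critical ones are, when $r\ge 4$, not forced in a near-optimal partial Steiner system --- this is precisely where the $r=3$ case departs. Running the R\"odl-nibble and absorption construction of partial Steiner systems avoiding a bounded family of configurations --- the machinery behind anti-Pasch and high-girth Steiner systems, refining R\"odl's theorem~\cite{Rodl85} --- then produces a linear $r$-graph of density $\frac1{r^2-r}-o(1)$ containing no member of $\C C$, hence no $8$ edges on at most $8r-14$ vertices.

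\emph{Upper bound.} Let $H$ be an $n$-vertex $r$-graph with $e(H)=f^{(r)}(n;8r-14,8)$; it suffices to show $e(H)\le\big(\frac1{r^2-r}+o(1)\big)n^2$. Form the auxiliary graph on $E(H)$ joining two edges whenever they meet in at least two vertices, and call its connected components \emph{clusters}. Growing a cluster along a spanning tree of this auxiliary graph, each added edge meets the union of the earlier ones in at least two vertices and so raises the excess by at least $2$; hence a connected cluster on $m$ edges has excess at least $2(m-1)$, and a cluster with $8$ or more edges already contains $8$ edges of excess at least $14$, which is forbidden. Thus every cluster has at most $7$ edges. Refinements of this observation further show, for instance, that no pair of vertices lies in $8$ or more edges, and that one cannot have vertex-disjoint clusters of combined excess at least $14$ spanning at most $8$ edges altogether (e.g.\ no four vertex-disjoint pairs of edges, each pair sharing at least four vertices). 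Partitioning clusters into \OneCluster{}s (single edges), \TwoCluster{}s, and \ThreePCluster{}s, one then runs a charging argument --- distributing \OneClaim{}s and \TwoPClaim{}s among the clusters and bounding their number --- which, using the restrictions above, shows that the total pair-excess $\sum_{\{x,y\}}\big(\deg_H(\{x,y\})-1\big)^{+}$ is $O(n)$. Since $e(H)\binom r2=\sum_{\{x,y\}}\deg_H(\{x,y\})\le\binom n2+\sum_{\{x,y\}}\big(\deg_H(\{x,y\})-1\big)^{+}$, this gives $e(H)\le\frac1{r^2-r}n^2+o(n^2)$, as desired.

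\emph{Main difficulty.} The crux --- and the only place the hypothesis $r\ge 4$ is genuinely needed --- is making the charging scheme tight enough to land on the exact constant $\frac1{r^2-r}$. The obstacle is controlling the \ThreePCluster{}s and especially the way clusters may pile up around a vertex of large degree: such a vertex lies in many edges and hence in many overlapping heavy sub-configurations, and one must exclude that this yields a cheaper-than-Steiner way to accumulate edges. When $r\ge 4$, every nontrivial cluster ``wastes'' enough vertices relative to the repeated pair-incidences it creates that this exchange never pays --- for instance, a $2$-fold Steiner system fails to be $\C F^{(r)}(8r-14,8)$-free once $r\ge 4$ --- whereas for $r=3$ the trade can be favourable, which is why $\pi(3,8)$ already exceeds $\frac1{30}$ and its exact value remains open.
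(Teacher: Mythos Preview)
Your lower-bound sketch is essentially the paper's argument: Theorem~\ref{thm:highgirth} applied with $F$ a single edge already encapsulates the high-girth/conflict-free packing you describe and gives $\pi(r,8)\ge\frac1{r(r-1)}$ in one line. (Incidentally, this works for $r=3$ as well, yielding $\pi(3,8)\ge 1/6$; the hypothesis $r\ge4$ is needed only for the matching upper bound, so the remark that the $r=3$ case ``departs'' at the lower bound is misplaced.)

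The upper bound, however, has a fatal gap: the claim that the pair-excess $\sum_{\{x,y\}}(\deg_H(\{x,y\})-1)^{+}$ is $O(n)$---or even $o(n^2)$---is false. A counterexample is produced by the very machinery behind the lower bound. Apply Theorem~\ref{thm:highgirth} with $F$ a diamond (two edges sharing exactly two vertices, so $F$ is $\cG{r}{8}$-free and $P_{\le4}(F)=\binom{V(F)}{2}$). This yields, for every $r\ge3$, an $(8r-14,8)$-free $r$-graph on $n$ vertices made of $\Theta(n^2)$ diamonds sitting on an edge-disjoint packing of copies of $K_{2r-2}$; each diamond contributes exactly one pair of codegree~$2$, and these heavy pairs are pairwise distinct, so the pair-excess is $\Theta(n^2)$. (For $r=4$ this graph has edge density about $1/15$, well below the extremal $1/12$, but it already refutes the structural claim.) Hence one cannot deduce $e(H)\binom r2\le\binom n2+o(n^2)$ by bounding pair multiplicities alone.

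The paper's upper bound works quite differently. After reducing to $\cG{r}{8}$-free graphs via Lemma~\ref{lem:ub}, it merges the $1$-clusters further into \emph{$2$-clusters} (along pairs that are $1$-claimed on one side and $2$-claimed on the other), proves a structural lemma (Lemma~\ref{lem:str}) classifying all $2$-clusters with nine or more edges, and then assigns to each $2$-cluster $F$ a weight that counts not only the $1$-claimed pairs but also the $\overline1 2$-claimed pairs (with coefficient $1/3$)---and, for $r=4$, additionally certain $\{2,4\}$-, $\{3,4,5\}$- and $\{3,5,6\}$-claimed pairs. It is precisely this extra weight on pairs \emph{outside} the $2$-shadow that compensates for the pair-excess inside a cluster and yields $w(F)\ge\binom r2|F|$; summing over clusters and using that every pair receives total weight at most~$1$ then gives $|G|\le\binom n2/\binom r2$. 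Your charging scheme tracks only codegrees inside the $2$-shadow, so it cannot access this compensation and cannot close the argument.
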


Moreover, we provide a lower bound for $r=3$, which implies that $r=4$ is the smallest uniformity such that $\pi(r,8)=\frac{1}{r^2-r}$.

\begin{thm}\label{thm:lb}
    $\pi(3,8)\ge \frac{3}{16}$.
\end{thm}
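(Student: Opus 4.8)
The aim is to construct, for every sufficiently large $n$, an $n$-vertex $3$-graph $H_n$ with $\bigl(\tfrac3{16}-o(1)\bigr)n^2$ edges and no $8$ edges spanning at most $10$ vertices; this gives $\pi(3,8)\ge\tfrac3{16}$. Following the shape of the lower-bound constructions for $k\in\{3,4,6\}$ in \cite{19G,GlockJoosKimKuhnLichevPikhurko24,GKLPS24}, the plan is to take $H_n$ to be a \emph{high-girth blow-up of a fixed finite density pattern}. A natural candidate pattern: split the vertices into four almost equal parts $V_1,\dots,V_4$ and, for each of the four triples $\{i,j,k\}$, place on $V_i\cup V_j\cup V_k$ a transversal linear triple system (a ``partial Latin square'') of size $\bigl(\alpha-o(1)\bigr)(n/4)^2$; with the right fraction $\alpha=\tfrac34$ this yields $\bigl(\tfrac3{16}-o(1)\bigr)n^2$ edges in total, and locally the construction is a union of ``$2$-clusters'', i.e.\ pairs of triples sharing two vertices, which is the cluster type one expects the extremal configuration to be assembled from. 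The key point is that the partial Latin squares, and more importantly the $3$-graph they jointly span, must be chosen to be \emph{robustly sparse}: each block should be high-girth, with any $j$ of its triples using at least $j+3$ of the associated coordinate values for $2\le j\le 8$, and the four blocks must be compatible in a similarly robust sense. Such configurations are obtained from the by-now-standard R\"odl-nibble-plus-alteration (or absorption) machinery for high-girth partial designs, provided the pattern is \emph{feasible} --- admits no unavoidable small dense subconfiguration --- which is exactly what the verification below establishes; the $o(n^2)$ edges lost to alterations do not affect the count, so the edge count is immediate.

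It then remains to check that a feasible $H_n$ as above contains no $8$ edges on at most $10$ vertices. Suppose $\mathcal E$ were such a family, spanning a set $S$ with $|S|\le 10$. One first records the \emph{type} of $\mathcal E$: how its eight triples split among the four super-edges and, within each super-edge, how many vertices of $S$ lie in each of its three parts. Since $|S|$ is a small constant there are only finitely many types, and each gives a short system of inequalities linking the $|S\cap V_i|$ with the per-super-edge edge counts. One then feeds the high-girth bound ``$j$ triples inside a super-edge span $\ge j+3$ coordinate values'' into each super-edge, adds the contributions of distinct parts, and --- taking care that a vertex lying in a part shared by two super-edges is counted once but may be used by triples of both --- derives $|S|\ge 11$ in every type, a contradiction. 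The ``all eight triples inside one super-edge'' type is immediate from high girth ($8$ cells span $\ge 11$ values), and the opposite extreme, where each super-edge carries at most two triples, is handled by the same inequalities with a bit of slack.

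The genuinely hard part is this verification for the \emph{mixed} types, where $\mathcal E$ is spread over two or three super-edges that pairwise share a part: the vertices of a shared part may be re-used by triples of several super-edges, so the per-super-edge bounds by themselves add up only to $10$, and winning the last vertex requires exploiting the \emph{joint} structure --- roughly, that two high-girth blocks meeting in a pair of parts cannot both be close to extremal on the same small set of rows and columns. Guaranteeing this is precisely the extra compatibility that the nibble/absorption step must be arranged to enforce, and making the bookkeeping close is where the bulk of the work sits. A secondary difficulty is to pin down the pattern itself --- the parts, which super-edges to retain, the fraction $\alpha$, and whether to also include within-part edges --- so that its density is exactly $\tfrac3{16}$ \emph{and} it is feasible; the apparent impossibility of pushing this method past $\tfrac3{16}$ is what motivates the conjecture that $\tfrac3{16}$ is the true value of $\pi(3,8)$, while the matching upper bound $\pi(3,8)\le\tfrac3{16}$ is left open.
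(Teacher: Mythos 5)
Your proposal takes a genuinely different route from the paper and, as you yourself concede at several points, falls well short of a proof. The paper's strategy is to invoke Theorem~\ref{thm:highgirth}: it suffices to exhibit a single $\cG{3}{8}$-free gadget $F$ (of any size; it need not be an $n$-vertex construction) with $|F|/(2\,|\p{\le 4}{F}|)$ tending to $3/16$. The gadget built in Lemmas~\ref{lm:2-paths} and~\ref{lem:const} starts from the incidence graph of a projective plane, takes a sparse random family $\C P$ of $2$-paths in it avoiding small dense subfamilies, and attaches to each $P_i=\{u_ia_i,a_iv_i\}$ a fresh copy $R_i$ of the $5$-vertex, $3$-edge $3$-graph $R=\{abc,\,buv,\,cuv\}$. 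Each block contributes $8$ new pairs to $\p{1}{F}$, while the only pairs that are $3$-claimed but not $1$-claimed are $a_iu_i,a_iv_i$, which are edges of the union graph $\bigcup_{P\in\C P}P$; that graph has only $O(m^{3/2})$ edges while $|F|=\Omega(m^{3/2}\log m)$, so $|\p{\le 4}{F}|=\tfrac{8}{3}|F|+O(m^{3/2})$ and the ratio tends to $3/16$. The conflict-free packing machinery underlying Theorem~\ref{thm:highgirth} then turns this small, finely tuned object into the $n$-vertex lower-bound construction.

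Your plan is instead to directly build a dense $n$-vertex $3$-graph as a $4$-partite blow-up with a partial Latin square on each tripartite block, and there are three concrete gaps. First, the construction is not pinned down: you state yourself that which super-edges to keep, the value of $\alpha$, and whether to include within-part edges are undetermined, so there is no fixed object to analyse. Second, the verification of $8$-freeness is only sketched, and you flag the ``mixed types'' (triples spread over blocks sharing parts) as the genuinely hard case requiring an unstated joint compatibility condition to be ``arranged to enforce''; this is the crux of the claim and it is absent. Third, there is an internal inconsistency: a transversal linear triple system is a linear hypergraph, so no two of its triples share two vertices, yet you describe the construction as ``locally a union of $2$-clusters, i.e.\ pairs of triples sharing two vertices,'' which cannot be what is meant. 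Without a verified pattern, a verified sparsity guarantee on it, and a closed case analysis of the $8$-edge configurations, the proposal does not establish $\pi(3,8)\ge 3/16$.
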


We conjecture that this lower bound is sharp.

\begin{conj}
    $\pi(3,8)=\frac{3}{16}$.
\end{conj}

\noindent \textbf{A connection to generalised Ramsey numbers.}
\hide{In a recent work, Bennett, Cushman and Dudek~\cite{BennettCushmanDudek25} found a connection between generalised Ramsey numbers and the Brown--Erd\H{o}s--S\'os function for $4$-graphs.
Our result that $\pi(4,8)=1/12$ directly gives the asymptotic value for a generalized Ramsey numbers. Before presenting the formal statement, we first introduce some necessary definitions and notation. 
}

In a recent work, Bennett, Cushman and Dudek~\cite{BennettCushmanDudek25} found a connection between the Brown--Erd\H{o}s--S\'os function for $4$-graphs and generalised Ramsey numbers, that were introduced by Erd\H{o}s and Shelah~\cite{ES75} and were first systematically studied by Erd\H{o}s and Gy\'arf\'as~\cite{EG97}.

For integers $p, q$ such that $p \geq 3$ and $2\le q\le\tbinom{p}{2}$, a \emph{$(p, q)$-colouring} of $K_n$
is a colouring of the edges of $K_n$ such that every clique of size $p$ receices at least $q$ colours. The \emph{generalised Ramsey number} $\GR{n,p,q}$ is the minimum number of colours such that $K_n$ has a $(p, q)$-colouring. 

\hide{
Erd\H{o}s and Gy\'arf\'as~\cite{EG97} proved that for  arbitrary $p\geq 3$ and $q_{\rm{lin}} := 
\tbinom{p}{2}-p+3$, 
\[\GR{n,p,q_{\rm{lin}}} = \Omega(n)\quad \text{and}\quad \GR{n,p,q_{\rm{lin}}-1} = o(n).\]
Similarly, they showed that for $q_{\rm{quad}} := 
\tbinom{p}{2}-\lfloor p/2\rfloor+2$,
\[\GR{n,p,q_{\rm{quad}}} = \Omega(n^2)\quad \text{and}\quad \GR{n,p,q_{\rm{quad}}-1} = o(n^2).\]
Thus, for fixed $p$, we say $q_{\rm{lin}}$ the \emph{linear threshold} and $q_{\rm{quad}}$ the \emph{quadratic threshold} of generalised Ramsey number.
}

Erd\H{o}s and Gy\'arf\'as~\cite{EG97} proved among other results that for  arbitrary $p\geq 3$ and $q_{\rm{quad}} := 
\tbinom{p}{2}-\lfloor p/2\rfloor+2$, it holds that
\[\GR{n,p,q_{\rm{quad}}} = \Omega\left(n^2\right)\quad \text{and}\quad \GR{n,p,q_{\rm{quad}}-1} = o\left(n^2\right).\]
Thus $q_{\rm{quad}}$ is the threshold for quadratic growth.

Bennett, Cushman and Dudek~\cite{BennettCushmanDudek25} showed the following connection between generalised Ramsey numbers and the Brown--Erd\H{o}s--S\'os function.

\begin{thm}[\cite{BennettCushmanDudek25}*{Theorem 3}]
\label{thm:quadratic}
For all even $p\geq 6$, we have
\[\lim_{n\to \infty} \frac{\GR{n,p,q_{\rm{quad}}}}{n^2} = \frac{1}{2} - \pi \left(4,\frac{p}{2}-1\right).\]
In particular, the limit on the left exists by~\cite{23sh}. 
\end{thm}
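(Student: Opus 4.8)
\emph{Proof idea for Theorem~\ref{thm:quadratic}.} The plan is to reduce the computation of $\GR{n,p,q_{\mathrm{quad}}}$ to the Brown--Erd\H{o}s--S\'os function $f^{(4)}(n;p,p/2-1)$, whose normalised limit is $\pi(4,p/2-1)$ by~\cite{23sh}. Fix an even $p\ge 6$ and put $K_n$ on vertex set $V$. For a colouring $\phi$ of $E(K_n)$ with colour classes $G_1,\dots,G_N$, the clique on $S\in\binom{V}{p}$ sees exactly $\binom{p}{2}-\mathrm{exc}_\phi(S)$ colours, where
\[
\mathrm{exc}_\phi(S)\ :=\ \sum_{c}\Bigl(\bigl|E(G_c)\cap\tbinom{S}{2}\bigr|-1\Bigr)_{+};
\]
hence $\phi$ is a $(p,q_{\mathrm{quad}})$-colouring if and only if $\mathrm{exc}_\phi(S)\le p/2-2$ for all such $S$. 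Since $N=\binom{n}{2}-\mathrm{sav}(\phi)$ where $\mathrm{sav}(\phi):=\sum_c\bigl(|E(G_c)|-1\bigr)$, we get $\GR{n,p,q_{\mathrm{quad}}}=\binom{n}{2}-\mathrm{sav}^{*}(n)$ with $\mathrm{sav}^{*}(n)$ the maximum of $\mathrm{sav}(\phi)$ over valid $\phi$, so it suffices to prove $\mathrm{sav}^{*}(n)=f^{(4)}(n;p,p/2-1)+o(n^2)$.

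For the lower bound $\mathrm{sav}^{*}(n)\ge f^{(4)}(n;p,p/2-1)-o(n^2)$, take an extremal $\C F^{(4)}(p,p/2-1)$-free $4$-graph $H$ on $V$ and, for each edge $\{a,b,c,d\}$ of $H$, merge two disjoint pairs of its vertices into one fresh colour, all other edges of $K_n$ keeping private colours. Then $\mathrm{exc}_\phi(S)$ is exactly the number of edges of $H$ inside $S$, which is at most $p/2-2$ by $\C F$-freeness, so $\phi$ is valid; and $\mathrm{sav}(\phi)$ equals the number of edges of $H$ for which the two chosen pairs are consistent (no edge of $K_n$ being asked to take two colours). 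This is all but $o(n^2)$ of $E(H)$, because $\C F^{(4)}(p,p/2-1)$-freeness forces every pair of vertices into at most $p/2-2$ edges of $H$, so the ``conflict graph'' on $E(H)$ has bounded maximum degree and the pairs can be picked greedily.

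The substance is the upper bound $\mathrm{sav}^{*}(n)\le f^{(4)}(n;p,p/2-1)+o(n^2)$, for which, given a valid $\phi$, I would build a $\C F^{(4)}(p,p/2-1)$-free $4$-graph $H'$ on $V$ with $e(H')\ge\mathrm{sav}(\phi)-o(n^2)$. First, every colour class has at most $p/2-1$ edges, since a class with $\ge p/2$ edges spans at most $p$ vertices and a $p$-set containing them has excess at least $p/2-1$. Second, the colour classes whose isomorphism type $F$ satisfies $\binom{v(F)}{4}<|E(F)|-1$ (in particular all cherries, triangles and paths or stars on four vertices) together save only $o(n^2)$ colours: for each fixed such $F$ the occurrences of $F$ form a $v(F)$-uniform hypergraph in which every $p$-set contains at most $\lfloor(p/2-2)/(|E(F)|-1)\rfloor$ edges, and for $p\ge 6$ a short computation shows the relevant Brown--Erd\H{o}s--S\'os exponent is strictly below $2$. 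Third, for each remaining colour class $G_c$ I would exhibit $|E(G_c)|-1$ \emph{distinct} $4$-subsets $A_1,\dots,A_{|E(G_c)|-1}$ of $V(G_c)$ with the local domination property
\[
\#\{\,i:\ A_i\subseteq U\,\}\ \le\ \bigl(|E(G_c[U])|-1\bigr)_{+}\qquad\text{for every }U\subseteq V(G_c);
\]
for instance, if $G_c$ is the matching $v_1w_1,\dots,v_tw_t$ one takes the overlapping $4$-sets $\{v_i,w_i,v_{i+1},w_{i+1}\}$ for $i=1,\dots,t-1$. Letting $H'$ be the union of all these $4$-sets gives $e(H')\ge\mathrm{sav}(\phi)-o(n^2)$, and summing the local inequalities over the colour classes meeting a given $p$-set $S$ shows the number of edges of $H'$ inside $S$ is at most $\mathrm{exc}_\phi(S)\le p/2-2$, i.e.\ $H'$ is $\C F^{(4)}(p,p/2-1)$-free.

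The main obstacle is precisely this third step: proving, as a self-contained structural lemma, that every graph $F$ with $\binom{v(F)}{4}\ge|E(F)|-1$ admits such a family of $|E(F)|-1$ distinct $4$-sets with local domination, uniformly over the boundedly many types $F$ that can occur $\Omega(n^2)$ times; one must also check that $4$-sets coming from different colour classes do not coincide too often, which holds because distinct colour classes share at most one edge of $K_n$. Granting this, the two bounds give $\mathrm{sav}^{*}(n)=f^{(4)}(n;p,p/2-1)+o(n^2)$, so that $\GR{n,p,q_{\mathrm{quad}}}=\binom{n}{2}-f^{(4)}(n;p,p/2-1)+o(n^2)$; dividing by $n^2$ and applying~\cite{23sh} yields $\lim_{n\to\infty}\GR{n,p,q_{\mathrm{quad}}}/n^2=\tfrac12-\pi(4,p/2-1)$, with existence of the limit inherited from that of $\pi(4,p/2-1)$.
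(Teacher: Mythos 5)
The paper does not actually prove Theorem~\ref{thm:quadratic}: it is imported as a black box from Bennett, Cushman and Dudek~\cite{BennettCushmanDudek25} and used only to derive the corollary on $\GR{n,18,146}$. So there is no internal proof to compare against; your sketch has to stand on its own.

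Your opening reduction is sound and clean: a $p$-clique $S$ sees $\binom{p}{2}-\mathrm{exc}_\phi(S)$ colours, the colouring is valid iff $\mathrm{exc}_\phi(S)\le p/2-2$ everywhere, and $\GR{n,p,q_{\mathrm{quad}}}=\binom{n}{2}-\mathrm{sav}^{*}(n)$, so the problem becomes $\mathrm{sav}^{*}(n)=f^{(4)}(n;p,p/2-1)+o(n^2)$. This bookkeeping is correct and is surely the right starting point. But the proof as written has a genuine gap, which you yourself name: the ``local domination'' structural lemma in the third step is asserted but not proved, and it carries the entire upper bound. You would have to show that every isomorphism type $F$ surviving your second filter admits $|E(F)|-1$ distinct $4$-subsets $A_i\subseteq V(F)$ with $\#\{i:A_i\subseteq U\}\le(|E(F[U])|-1)_+$ for all $U$; the usable $4$-sets are only those $A$ with $|E(F[A])|\ge 2$, and it is not at all obvious that enough of them exist, nor that they can be chosen to respect the inequality on every intermediate $U$ simultaneously. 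Until this is proved (uniformly over the boundedly many admissible types), the upper bound $\mathrm{sav}^{*}(n)\le f^{(4)}(n;p,p/2-1)+o(n^2)$ is not established.

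Beyond the flagged gap, two intermediate claims need repair. The assertion that ``a class with $\ge p/2$ edges spans at most $p$ vertices'' is false as stated (such a class can span arbitrarily many vertices); what is true, and suffices, is that \emph{any $p/2$ edges} of one class span at most $p$ vertices, hence extend to a $p$-set with excess at least $p/2-1$, which is forbidden. Also, in the lower bound you assign to each edge $\{a,b,c,d\}$ of $H$ a merged pair, but because a pair $ab$ can lie in up to $p/2-2$ edges of $H$, the transitive closure of these identifications can produce colour classes with more than two edges, and one must check (rather than just wave at a ``conflict graph of bounded degree'') that this neither breaks the excess condition nor loses $\Omega(n^2)$ savings. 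The plan is reasonable and likely salvageable, but in its present form it is an outline with a missing key lemma rather than a proof.
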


From Theorem~\ref{thm:main}, together with Theorem~\ref{thm:quadratic}, we directly obtain the following asymptotic value of $\GR{n,18,146}$.

\begin{thm}
$$ 
\lim_{n\to \infty} \frac{\GR{n,18,146}}{n^2} = \frac{5}{12}.
$$
\end{thm}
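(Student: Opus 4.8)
The plan is to combine Theorem~\ref{thm:quadratic} with Theorem~\ref{thm:main}; no new ideas are required. First I would check that $p=18$ meets the hypotheses of Theorem~\ref{thm:quadratic}, which it does, being even and at least $6$. Next I would record the value of the quadratic threshold at $p=18$: since $q_{\rm{quad}}=\binom{p}{2}-\lfloor p/2\rfloor+2$, we get $q_{\rm{quad}}=\binom{18}{2}-9+2=153-9+2=146$, so $\GR{n,18,146}$ is exactly the generalised Ramsey number sitting at the quadratic threshold for $18$-cliques.

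Applying Theorem~\ref{thm:quadratic} with $p=18$ then gives
\[
\lim_{n\to\infty}\frac{\GR{n,18,146}}{n^2}=\frac12-\pi\!\left(4,\frac{18}{2}-1\right)=\frac12-\pi(4,8),
\]
where the limit on the left is known to exist by the cited result of Shangguan~\cite{23sh}. Finally I would invoke Theorem~\ref{thm:main} with $r=4$ to evaluate $\pi(4,8)=\frac{1}{4^2-4}=\frac{1}{12}$, whence the limit equals $\frac12-\frac{1}{12}=\frac{5}{12}$, as claimed.

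The argument is entirely a substitution, so there is no genuine obstacle here; the one point worth emphasising is that the input needed is precisely $\pi(4,8)$, i.e.\ the $k=8$ case of the Brown--Erd\H{o}s--S\'os problem for $4$-graphs supplied by Theorem~\ref{thm:main}, and that the elementary identities $q_{\rm{quad}}=146$ and $\tfrac{p}{2}-1=8$ should be double-checked. One might also remark that this explains why the value for $k=8$ (and not merely smaller $k$) is the relevant new ingredient for this particular generalised Ramsey number.
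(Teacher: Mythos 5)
Your proposal is correct and is exactly the argument the paper intends: the theorem is stated with a \qed because it is a direct substitution of $p=18$ into Theorem~\ref{thm:quadratic}, using $q_{\rm quad}=\binom{18}{2}-9+2=146$ and $\pi(4,8)=\frac{1}{12}$ from Theorem~\ref{thm:main}. Your arithmetic checks out and no further justification is needed.
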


\noindent\textbf{Organisation.} The remainder of this paper is organised as follows. We introduce some necessary definitions and notation in Section~\ref{se:prelim}. Section~\ref{se:lower} provides the proofs of lower bounds in Theorem~\ref{thm:main} and~\ref{thm:lb}. The proof of upper bound in Theorem~\ref{thm:main} can be found in Section~\ref{se:upper}. 

\section{Preliminaries}\label{se:prelim}

Throughout the paper, we shall use the following notation and terminology.
For integers $m$ and~$n$, we denote by $[n]$ the set $\{1,\dots,n\}$ and by $[m,n]$ the set $[n]\setminus [m-1]=\{m,\dots,n\}$. For a set $X$, we define ${X\choose m}:=\{Y\subseteq X: |Y|=m\}$ to be the family of all $m$-subsets of~$X$. For simplicity, we often denote the unordered pair $\{x,y\}$ (resp.\ triple $\{x,y,z\}$)  by $xy$ (resp.\ $xyz$). 


We will often identify an $r$-graph $G$ with its edge set. In particular, if we specify only the edge set $E(G)$ then the vertex set is assumed to be the union of these edges, that is, $V(G)=\bigcup_{e\in E(G)} e$. We let $|G|$ be the number of edges of $G$ and $v(G)$ be the number of vertices of~$G$.
For two $r$-graphs $G$ and $H$, we define their \emph{union} $G\cup H$ by $E(G\cup H):=E(G)\cup E(H)$, and their \emph{difference} $G\setminus H$ by $E(G\setminus H):=E(G)\setminus E(H)$. 
By a \emph{graph}, we will mean a 2-graph.

A \emph{diamond} is an $r$-graph consisting of two edges sharing exactly two vertices.
For positive integers $s$ and $k$, an \emph{$(s,k)$-configuration} is an $r$-graph with $k$ edges and at most $s$ vertices, that is, an element of $\C F^{(r)}(s,k)$. In particular, if $s=rk-2k+2$, we simply omit $s$ and refer to it as a \emph{$k$-configuration}. Moreover, if $s=rk-2k+1$, we refer to it as \emph{$k^-$-configuration}. We say that an $r$-graph is \emph{$k$-free} (resp.\ \emph{$k^-$-free}) if it contains no $k$-configuration (resp.\ $k^-$-configuration). Let $\cG{r}{k}$ denote the family of all $k$-configurations and all $\ell^-$-configurations with $\ell\in [2,k-1]$, namely, 
$$\cG{r}{k}:=\C F^{(r)}(rk-2k+2,k) \cup\left(\bigcup_{\ell=2}^{k-1}\C F^{(r)}(r\ell-2\ell+1,\ell)\right).$$

Note that $\cG{r}{k}$ not only contains $k$-configurations, which is primary topic of this paper, but also includes ``denser'' $r$-graphs of smaller sizes. In the following sections, we will see that this family is closely related to the lower and upper bounds on $\pi(r,k)$. 

We use the following definitions introduced in~\cite{GKLPS24}. For an $r$-graph $G$, a pair $xy$ of distinct vertices (not necessarily in ${V(G)\choose 2}$) and $A\subseteq\I N\cup\{0\}$, we say that $G$ \emph{$A$-claims} the pair $xy$ if, for every $i\in A$, there are $i$ distinct edges $e_1,\dots,e_i\in E(G)$ such that $|\{x,y\}\cup (\bigcup_{j=1}^i e_j)|\leq ri-2i+2$. In particular, if $xy\in {V(G)\choose 2}$, this is equivalent to the existence,  for every $i\in A$,  of an $i$-configuration $J\subseteq G$ such that $\{x,y\}\subseteq V(J)$. Let $\p{A}{G}$ be the set of all pairs in ${V(G)\choose 2}$ that are $A$-claimed by~$G$.  If $A=\{i\}$ is a singleton, we simply write \emph{$i$-claims} (resp.\ $\p{i}{G}$) instead of $\{i\}$-claims (resp.\ $\p{\{i\}}{G}$).
For $i=1$, $\p{1}{G}$ is the usual \emph{$2$-shadow} of $G$ consisting of all pairs $uv$ of vertices such that there exists some edge $e\in E(G)$ with $u,v\in e$.
Let $\CI_{G}(xy)$ be the set of those $i\geq 0$ such that the pair $xy$ is $i$-claimed by $G$, that is,
 \begin{equation}\label{eq:I}
 \textstyle
 \CI_{G}(xy):=\left\{ i\geq 0: \exists\mbox{ distinct $e_1,\dots,e_i\in E(G)$ such that }\big|\{x,y\}\cup (\bigcup_{j=1}^i e_j)\big|\leq ri-2i+2\right\}.
 \end{equation}
More generally, for disjoint subsets $A,B\subseteq \I N$, we say that $G$ \emph{\OutIn{A}{B}-claims} a pair $xy$ if $A\cap \CI_{G}(xy)=\varnothing$ and $B\subseteq \CI_{G}(xy)$. For simplicity, we often omit curly brackets. For example, when $A=\{1\}$ and $B=\{i\}$ we just say \emph{\OutIn{1}{i}-claims}; also, we  let $\pp{1}{i}{G}:=\p{i}{G}\setminus \p{1}{G}$ denote the set of pairs in ${V(G)\choose 2}$ that are \OutIn{1}{i}-claimed by~$G$, and similarly let $\pp{12}{i}{G}:=\p{i}{G}\setminus (\p{1}{G}\cup \p{2}{G})$.

\hide{
\noindent\textbf{Organisation.}
The remainder of this paper is organised as follows. Section 2 provides the proofs of lower bounds in Theorem~\ref{thm:main} and~\ref{thm:lb}. The proof of upper bound in Theorem~\ref{thm:main} can be found in Section 3. 
}

\section{Lower bounds}\label{se:lower}
In order to prove lower bounds in Theorem~\ref{thm:main} and~\ref{thm:lb}, we need the following result proved by Glock, Joos, Kim, K\"{u}hn, Lichev and Pikhurko~\cite{GlockJoosKimKuhnLichevPikhurko24}.

\begin{thm}[\cite{GlockJoosKimKuhnLichevPikhurko24}*{Theorem 3.1}]\label{thm:highgirth}
Fix $k\geq 2$ and $r\geq 3$. Let $F$ be a $\cG{r}{k}$-free $r$-graph. 
Then,
$$\liminf_{n\rightarrow \infty}\frac{f^{(r)}(n;rk-2k+2,k)}{n^2}\geq \frac{|F|}{2\,|\p{\le \lfloor k/2\rfloor}{F}|},$$
where we define $P_{\le t}(F):=\{xy\in {V(F)\choose 2}\mid  \CI_{F}(xy)\cap [t]\not=\varnothing\}$ to consist of all pairs $xy$ of vertices of $F$ such that $\CI_{F}(xy)$ contains some $i$ with $1\leq i\leq t$.
\end{thm}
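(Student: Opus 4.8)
The plan is to construct, for every large $n$, an $n$-vertex $k$-free $r$-graph $G$ with $|G|\ge(1-o(1))\,\frac{|F|}{2\,|P_{\le\lfloor k/2\rfloor}(F)|}\,n^2$; dividing by $n^2$ and letting $n\to\infty$ then yields the bound on the $\liminf$. We may assume $F$ has an edge (otherwise the bound is trivial), so $v:=v(F)\ge r\ge 3$; write $m:=|F|$, $P:=P_{\le\lfloor k/2\rfloor}(F)$, $p:=|P|$, and fix $V(F)=[v]$. I would pack many \emph{copies} of $F$: a copy is an injection $\phi\colon[v]\hookrightarrow[n]$, with image $r$-graph $\phi(F)$ and \emph{claimed-pair set} $E_\phi:=\{\phi(a)\phi(b):ab\in P\}\subseteq\binom{[n]}{2}$. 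Let $\mathcal H$ be the $p$-uniform hypergraph on vertex set $\binom{[n]}{2}$ whose edges are the sets $E_\phi$. A routine count gives that $\mathcal H$ is $(1+o(1))\Delta$-regular with $\Delta=\Theta(n^{v-2})$ and has all codegrees $O(n^{v-3})=O(\Delta^{1-\Omega(1)})$. A matching $\mathcal M$ in $\mathcal H$ is exactly a family of copies with pairwise disjoint claimed-pair sets; since each of the $\binom{r}{2}\ge 3$ pairs inside an $r$-edge of $F$ lies in $P$, two copies that shared an $r$-edge would share $\binom{r}{2}$ claimed pairs, so the copies in $\mathcal M$ are in particular edge-disjoint and $G:=\bigcup_{\phi\in\mathcal M}\phi(F)$ has exactly $m\,|\mathcal M|$ edges. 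A near-perfect matching has $|\mathcal M|=(1-o(1))\binom{n}{2}/p$ and hence the required edge count, so the task reduces to choosing $\mathcal M$ so that, in addition, $G$ is $k$-free.

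\textbf{Conflicts.} Suppose $G$ contains a $k$-configuration $J$. Since the copies in $\mathcal M$ are edge-disjoint, $J$'s $k$ edges split among the copies contributing them as $J=J_1\cup\dots\cup J_t$ with $\varnothing\ne J_i\subseteq\phi_i(F)$ and $\sum_i|J_i|=k$. If $t=1$ then $J\subseteq\phi_1(F)\cong F$ is a $k$-configuration in $F$ — impossible; so $t\ge 2$ and each $|J_i|\le k-1$. As $F$, hence each $J_i$, has no $\ell^-$-configuration for $\ell\in[2,k-1]$ (and a single $r$-edge spans $r=r\cdot 1-2\cdot 1+2$ vertices), we get $v(J_i)\ge r|J_i|-2|J_i|+2$ for all $i$, so $\sum_i v(J_i)\ge rk-2k+2t$, whereas $v(J)\le rk-2k+2$. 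By the Bonferroni inequality $v(J)\ge\sum_i v(J_i)-\sum_{i<j}|V(J_i)\cap V(J_j)|$, whence $\sum_{i<j}|V(J_i)\cap V(J_j)|\ge 2(t-1)$. Thus a $k$-configuration can arise only from a \emph{heavily interlocked} cluster of $2\le t\le k$ copies; this is the ``high-girth'' feature of the construction. I would declare every minimal such cluster (a subset of $E(\mathcal H)$ of size $\le k$) a \emph{conflict}, obtaining a conflict system $\mathcal C$ on $E(\mathcal H)$, so that any $\mathcal C$-free matching $\mathcal M$ yields a $k$-free $G$.

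\textbf{Finding the matching.} I would feed $(\mathcal H,\mathcal C)$ into a conflict-free hypergraph matching theorem — a Rödl nibble forbidding the configurations in $\mathcal C$, in the spirit of Glock--Joos--Kim--Kühn--Osthus, or a Delcourt--Postle absorption argument — to extract a near-perfect $\mathcal C$-free matching $\mathcal M$. The hypotheses to be checked are that for each $2\le j\le k$ the number of size-$j$ conflicts through any fixed copy is $o(\Delta^{j-1})$, together with appropriate bounds on the codegrees of $\mathcal C$; the interlocking estimate of the previous step is the tool, since the $\ge 2(t-1)$ forced coincidences constrain the copies of a cluster while the disjointness of claimed pairs across $\mathcal M$ prevents some of those coincidences from being ``free''. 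Since almost every pair of $\binom{[n]}{2}$ is covered, all but $o(n)$ vertices appear in $G$; passing to the induced subgraph on the used vertices gives the desired example on $(1-o(1))n$ vertices.

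\textbf{The hard part.} The crux is the conflict-density estimate. A crude count already puts the number of interlocked $j$-clusters through a fixed copy at the \emph{critical} order $\Delta^{j-1}$ — visible already for $k=4$, where four copies meeting pairwise in a single vertex glue four $r$-edges into a $k$-configuration, and there are $\Theta(\Delta^{3})$ such patterns through a given copy — whereas a conflict-free matching theorem requires $o(\Delta^{j-1})$. The saving has to come from the full strength of $\cG{r}{k}$-freeness (not merely $k$-freeness of $F$): in a ``tight'' cluster (each $J_i$ spanning exactly $r|J_i|-2|J_i|+2$ vertices and every shared vertex used) the interlocking should be rigid enough that either some pair of copies is forced to meet in two vertices, or several shared vertices are forced into common $r$-edges — in either case re-creating claimed pairs that the matching forbids, so that such clusters can be discarded from $\mathcal C$ or counted strictly below critical order. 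Making this rigidity quantitative and uniform over all $\cG{r}{k}$-free $F$, and absorbing away the genuinely borderline clusters not killed by the rigidity, is the technical heart; everything else is bookkeeping around a standard semi-random argument.
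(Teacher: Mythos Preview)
Your plan coincides with the sketch the paper gives (note that the paper \emph{cites} this theorem from~\cite{GlockJoosKimKuhnLichevPikhurko24} rather than proving it): pack copies of the graph $J:=P_{\le\lfloor k/2\rfloor}(F)$ almost perfectly into $K_n$ via a conflict-free hypergraph matching, and place a copy of $F$ on top of each. Your inequality $\sum_{i<j}|V(J_i)\cap V(J_j)|\ge 2(t-1)$, derived from $\ell^-$-freeness of $F$, is exactly the input one feeds into the conflict count.

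Where you go astray is in the diagnosis of the ``hard part''. The conflict-free matching theorems of Glock--Joos--Kim--K\"uhn--Lichev~\cite{24GJKKL} and Delcourt--Postle~\cite{DelcourtPostle24} require only $O(\Delta^{j-1})$ size-$j$ conflicts through each hyperedge of $\mathcal H$, not $o(\Delta^{j-1})$; a genuine power saving is demanded only for the \emph{codegree} (conflicts through two fixed hyperedges). Your own interlocking bound already delivers the degree condition: for a size-$j$ conflict one embeds the $k$-configuration $K$ with $J_1$ pinned inside the fixed copy, paying $n^{v(K)-v(J_1)}$ for the remaining vertices and $\prod_{i\ge 2}n^{v-v(J_i)}$ for the remaining copies, and the inequalities $v(K)\le rk-2k+2$, $\sum_i v(J_i)\ge rk-2k+2j$ collapse this to $n^{(j-1)(v-2)}=\Delta^{j-1}$. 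Fixing a second copy pins $J_2$ as well and buys at least one extra power of $n$, so the codegree is $O(\Delta^{j-1}/n)$. Thus your four-copy example at $\Theta(\Delta^{3})$ is not an obstruction at all. Correspondingly, your proposed rescue --- arguing that ``tight'' clusters must share a claimed pair and can therefore be discarded --- is both unnecessary and false in general: in that very four-copy configuration no two copies share two vertices, so no claimed pair is duplicated and nothing is discarded. Once the hypotheses of the black box are read correctly, the verification is the routine bookkeeping you already set up, not a new structural idea.
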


In brief, Theorem~\ref{thm:highgirth} is proved by finding, for any large $n$, an almost optimal edge-packing of copies of the graph $J:=\p{\le \lfloor k/2\rfloor}{F}$ in the
$n$-clique, and putting a copy of $F$ ``on top" of each copy of $J$. Since $F$ has no $k$-configuration, it remains to prevent any $k$-configurations that use at least two different copies of $J$. For this, the packing has to be chosen carefully, using the general theory of conflict-free hypergraph matchings developed independently by Delcourt and Postle~\cite{DelcourtPostle24} and Glock, Joos, Kim, K\"{u}hn and Lichev~\cite{24GJKKL}. 


We note that Theorem~\ref{thm:highgirth} was used to derive lower bounds of $\pi(r,k)$ for $k\in [4,7]$ in~\cite{GlockJoosKimKuhnLichevPikhurko24,GKLPS24}, as well as to prove the existence of $\pi(r,k)$ in~\cite{DelcourtPostle24,23sh}. In the following, we will also apply this theorem to determine the lower bounds in Theorem~\ref{thm:main} and~\ref{thm:lb}.

\begin{proof}[Proof of the lower bound in Theorem~\ref{thm:main}]
 For $r\ge 4$, the lower bound $\pi(r,8)\ge 1/(r^2-r)$ follows from Theorem~\ref{thm:highgirth} with the $r$-graph $F$ being a single edge $e$ (as then $P_{\le 4}(F)={e\choose 2}$ consists of all pairs inside $e$).    
\end{proof}

\begin{figure}[b]
    \begin{center}
\begin{tikzpicture}
\fill[color=orange,fill opacity=0.2] (0,2.1) -- (1,0.6) -- (-1,0.6) -- cycle;
\fill[color=red,fill opacity=0.2] (1,0.6) -- (-1,-1) -- (1,-1) -- cycle;
\fill[color=blue,fill opacity=0.2] (-1,0.6) -- (-1,-1) -- (1,-1) -- cycle;
    \draw[line width=0.7pt](-1,-1)--(1,-1);
    \draw[line width=0.7pt] (-1,0.6)--(1,0.6);
    \draw[line width=0.7pt] (-1,0.6)--(-1,-1);
    \draw[line width=0.7pt] (1,0.6)--(1,-1);
    \draw[line width=0.7pt] (1,0.6)--(-1,-1);
    \draw[line width=0.7pt] (-1,0.6)--(1,-1);
    \draw[line width=0.7pt] (1,0.6)--(0,2.1);
    \draw[line width=0.7pt] (-1,0.6)--(0,2.1);   
    \node[scale=1.2] at (0,2.37) {$a$};
    \node[scale=1.2] at (1.3,0.6) {$c$};
    \node[scale=1.2] at (-1.3,0.6) {$b$};
    \node[scale=1.2] at (-1.3,-1.2) {$u$};
    \node[scale=1.2] at (1.3,-1.2) {$v$};
    \draw [fill=black] (-1,-1) circle (2pt);
    \draw [fill=black] (1,-1) circle (2pt);
    \draw [fill=black] (-1,0.6) circle (2pt);
    \draw [fill=black] (1,0.6) circle (2pt);
    \draw [fill=black] (0,2.1) circle (2pt);

\end{tikzpicture} 
\caption{An illustration of $R$}
    \end{center}
      \label{fi:1}
\end{figure}

Let us informally describe the construction of $F$ used to prove
Theorem~\ref{thm:lb} via an application of Theorem~\ref{thm:highgirth}. Let $R$ be the $5$-vertex $3$-edge $3$-graph obtained from an edge $abc$ by adding a diamond $\{buv, cuv\}$, see Figure~1.
This $3$-graph \OutIn{1}{3}-claims the pairs $au$ and~$av$. Our goal is to construct a $4$-free and $8$-free $3$-graph $F$ consisting of many copies of $R$ such that the number of \OutIn{1}{3}-claimed pairs in $F$ is much smaller than $|F|$. We fix a large integer $m$ and a bipartite graph $G$ with $2m$ vertices, such that $G$ does not contain a 4-cycle as a subgraph and each its vertex has degree $\Theta(\sqrt m)$.  We take a random collection $\C P$ of $2$-paths (that is, paths consisting of 2 edges) where each 2-path of $G$ is included into $\C P$ with probability $p:=(\log m)/\sqrt{m}$, except we remove some (negligibly many) paths to satisfy the property that for any $i\in [8]$,  every $i$-subset $\{P_1,\dots, P_i\}$ of $\C P$ satisfies $v\left(\bigcup_{j\in [i]}P_j\right)\ge i+2$. We construct $F$ from $\C P$ as follows. 
For each path $P_i\in \C P$, say $P_i=\{u_ia_i,a_iv_i\}$, we add two vertices $b_i,c_i$ to $V(F) $ and add all edges from $R_i:=\{a_ib_ic_i,b_iu_iv_i,c_iu_iv_i\}$ to $E(F)$. Thus $R_i$ is a copy of $R$ on top of $P_i$ such that the two edges of $P_i$ are \OutIn{1}{3}-claimed by $R_i$. 

We split the proof of Theorem~\ref{thm:lb} into two main parts. Lemma~\ref{lm:2-paths} returns a collection $\C P$ of $2$-paths with the required properties. Lemma~\ref{lem:const} verifies that the constructed $F$ satisfies all conditions of Theorem~\ref{thm:highgirth}, which will give us the desired lower bound of Theorem~\ref{thm:lb}.

\hide{
A graph $G$ is called \emph{$K$-almost-regular} if $\Delta(G)\le K\delta(G)$, where $\Delta(G)$ is the maximum degree of $G$ and $\delta(G)$ is the minimum degree of $G$. In order to find an almost regular graph, we need the following lemma from~\cite{21CJL}, which is a variant of a result by Erd\H{o}s and Simonovits~\cite{70ES}.

\begin{lemma}[\cite{21CJL}*{Lemma 2.2}]\label{lm:regular}
    Let $\varepsilon,c$ be positive reals, where $\varepsilon<1$. Let $n$ be a positive integer that is sufficiently large as a function of $\varepsilon$ and $c$. Let $G$ be a graph on $n$ vertices with $|G|\ge cn^{1+\varepsilon}$. Then $G$ contains a $K$-almost-regular subgraph $G'$ on $m\ge n^{\frac{\varepsilon-\varepsilon^2}{4+4\varepsilon}}$ vertices such that $|G'|\ge \frac{2c}{5}m^{1+\varepsilon}$ and $K=20\dot 2^{\frac{1}{\varepsilon^2}+1}$.
\end{lemma}

We also need the following lemma from~\cite{16AS}, which can be proved directly by considering a random partition.

\begin{lemma}[\cite{16AS}*{Theorem 2.2.1}]\label{lm:bipartite}
    Let $G$ be a graph with $n$ vertices and $e$ edges. Then $G$ contains a bipartite subgraph with at least $e/2$ edges.
\end{lemma}

Now we are ready to provide the lemma finding a family of 2-paths.}

First, we provide the lemma finding a desired family of 2-paths.

\begin{lemma}\label{lm:2-paths}
For any sufficiently large prime power $q$, there exists a family $\C P$ of $2$-paths satisfying the following properties with $m:=q^2+q+1$:

   \begin{enumerate}[label=(\roman*)]
       \item $|\C P|=\Omega(m^{3/2}\log m)$,
       \item the union graph $\bigcup_{P\in \C P}P$ has 
       $O(m^{3/2})$ edges,
       \item the union graph $\bigcup_{P\in \C P}P$ is triangle-free, $C_4$-free and $C_5$-free,
       \item for any $i\in [8]$, every $i$-subset $\{P_1,\dots, P_i\}$ of $\C P$ satisfies $v\left(\bigcup_{j\in [i]}P_j\right)\ge i+2$.
   \end{enumerate}
\end{lemma}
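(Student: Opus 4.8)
The plan is to take a well-known $C_4$-free host graph — the incidence graph of a projective plane of order $q$, or equivalently the Erd\H{o}s--R\'enyi polarity graph — as the starting point, sample $2$-paths from it, and then prune. Concretely, let $G$ be a bipartite $(q+1)$-regular $C_4$-free graph on $2m$ vertices with $m = q^2+q+1$ (the incidence graph of $\mathrm{PG}(2,q)$ works, being bipartite it is automatically triangle-free and $C_5$-free, and projective-plane incidence graphs have girth $6$ so they are $C_4$-free). Form $\C P_0$ by including each $2$-path of $G$ independently with probability $p := (\log m)/\sqrt m$. I would first record the relevant first-moment estimates: $G$ has $\Theta(m^{3/2})$ edges and $\Theta(m \cdot q^2) = \Theta(m^2)$ $2$-paths (each middle vertex contributes $\binom{q+1}{2} = \Theta(m)$ paths), so $\E{|\C P_0|} = p \cdot \Theta(m^2) = \Theta(m^{3/2}\log m)$, matching (i). Each edge of $G$ lies in $\Theta(q) = \Theta(\sqrt m)$ $2$-paths, so the expected number of edges actually covered by $\C P_0$ is $\Theta(m^{3/2}) \cdot (1 - (1-p)^{\Theta(\sqrt m)}) = \Theta(m^{3/2})$ — in fact essentially all of $E(G)$ gets covered whp, which already gives (ii) since $|E(G)| = \Theta(m^{3/2})$, and gives (iii) for free since any subgraph of $G$ inherits triangle-, $C_4$- and $C_5$-freeness.

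The substance of the proof is property (iv): for every $i \in [8]$ and every $i$-subset $\{P_1,\dots,P_i\} \subseteq \C P$ we need $v(\bigcup_j P_j) \ge i+2$. I would approach this by deleting, from $\C P_0$, one path from each "bad" subset, and arguing the number of deletions is $o(\E{|\C P_0|})$ so that (i) survives. A $2$-path has $3$ vertices, so a single path already satisfies $v \ge 3 = 1+2$, and the union of $i$ edge-disjoint $2$-paths generically has many more than $i+2$ vertices; violations of (iv) require the paths to overlap heavily. The key structural claim is that in a $C_4$-free, $C_5$-free, triangle-free graph, any family of $i \le 8$ distinct $2$-paths with $v(\bigcup_j P_j) \le i+1$ must contain a bounded-size sub-configuration that is itself "sparse" in the same sense (few vertices relative to number of paths), and such sparse configurations are rare under the random sampling. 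Quantitatively: a configuration on $t$ vertices using $\ell$ of the sampled paths, with $t$ bounded, occurs in expectation $O(m^t) \cdot p^\ell = O(m^{t - \ell/2}(\log m)^\ell)$ times; as long as $t < \ell/2 \cdot (\text{something}) $... more carefully, we want this to be $o(m^{3/2}\log m)$, i.e. $t - \ell/2 < 3/2$ with room to spare, equivalently $\ell > 2t - 3$. So I would enumerate the finitely many "overlap patterns" of $\le 8$ paths spanning $\le i+1$ vertices, check that each forces $\ell \ge 2t-2$ (using girth $\ge 6$ to rule out the densest patterns — e.g. two $2$-paths sharing both endpoints would be a $C_4$), and conclude each pattern contributes $o(m^{3/2}\log m)$ bad paths in expectation.

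The main obstacle, and where I would spend the most care, is the finite case analysis underpinning property (iv): showing that for each $i \le 8$, a union of $i$ distinct $2$-paths on at most $i+1$ vertices in a graph of girth $\ge 6$ must be dense enough (in the $\ell > 2t-3$ sense) that the expected count vanishes. The girth condition is doing real work here — it forbids short cycles, which are exactly the efficient ways to pack many edges (hence many $2$-paths) into few vertices — so I would organize the analysis around the cycle structure of $\bigcup_j P_j$. One clean way: if $\bigcup_j P_j$ has $t$ vertices and $e$ edges, then since each $2$-path contributes $2$ edges and paths may share edges, $i \le \binom{e}{2}$ trivially, but more usefully, girth $\ge 6$ gives $e \le t + O(t^{3/2})$-type bounds via the Moore bound, and for the small values $t \le 9$ in play one can just bound $e$ explicitly (e.g. $t \le 5 \Rightarrow$ the graph is a forest or a single $6$-cycle is impossible on $5$ vertices, so it is a forest, $e \le t-1$, hence very few $2$-paths). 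Handling the borderline patterns — those just barely failing $v \ge i+2$ — and verifying the expectation bound is $o(m^{3/2}\log m)$ uniformly, then invoking a union bound / deletion argument and a concentration statement (Chernoff or Janson) to guarantee $|\C P| = \Omega(m^{3/2}\log m)$ after pruning, completes the proof.
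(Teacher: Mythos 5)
Your overall strategy — same host graph (incidence graph of a projective plane), same sampling probability $p=(\log m)/\sqrt m$, and a probabilistic deletion argument to kill the few bad configurations that violate~(iv) — is exactly the paper's approach. Properties (i)--(iii) you handle correctly and in the same way.

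However, there is a genuine gap in your treatment of~(iv), and it is not a minor one: your first-moment computation uses the crude bound that there are $O(m^t)$ candidate configurations on $t$ vertices, leading you to require $\ell > 2t-3$ sampled paths per bad configuration. That requirement is not satisfiable. By definition, a bad (``dense'') family of $\ell$ paths has $t\le \ell+1$ vertices, i.e.\ $\ell\ge t-1$; you would need $\ell\ge 2t-2$, which is strictly stronger than $\ell\ge t-1$ for every $t\ge 2$, so \emph{no} dense configuration meets your threshold. Concretely, your claim that ``each pattern forces $\ell\ge 2t-2$'' is false already for a star: three $2$-paths through a common centre give $\ell=3$ and $t=4$, so $\ell=3<6=2t-2$, yet this pattern certainly occurs in the incidence graph of a projective plane (girth $6$ does not forbid high-degree stars). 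Plugging this into your expectation bound yields $O(m^4)\cdot p^3 = O(m^{5/2}\log^3 m)$, which swamps $m^{3/2}\log m$, so the deletion step as you describe it would destroy $\C P_0$.

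The fix — and this is what the paper does — is to count configurations \emph{inside the sparse host graph} rather than over all $t$-subsets of $V(G')$. Since $G'$ is $(q+1)$-regular with $q+1=\Theta(\sqrt m)$, the number of connected $t$-vertex subgraphs of $G'$ is $O\bigl(m\cdot\Delta(G')^{\,t-1}\bigr)=O\bigl(m^{(t+1)/2}\bigr)$, not $O(m^t)$. (The paper bounds it by counting spanning trees built greedily from one vertex, and also notes that each fixed connected subgraph hosts only $O(1)$ choices of the $\ell$ paths, since $\ell\le 8$ and $t\le 9$.) One also needs the minor observation that it suffices to consider \emph{connected} bad families: the paper introduces ``minimal dense sets'' and proves they are connected, so that this sharper count applies. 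With the corrected count, the expectation of selected bad configurations becomes $p^\ell\cdot O(m^{(t+1)/2})=O(m\log^\ell m)$ whenever $\ell\ge t-1$, which is exactly the dense regime, and this is indeed $o(m^{3/2}\log m)$. The girth condition is still used, but only to carry (iii); the real engine behind (iv) is the degree bound. Finally, you do not need Chernoff or Janson here: once the expected number of bad configurations is $o(\E{|\C P_0|})$, a single application of linearity of expectation followed by choosing a good deterministic outcome already finishes the deletion argument.
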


\begin{proof}
Let $G'$ be the incidence bipartite graph of the Desarguesian projective plane over the finite field $\mathbbm F_q$. In more detail, let $V_i$, for $i=1,2$, consist of $i$-dimensional linear subspaces of $\mathbbm F_q^3$, and the graph adjancency is the inclusion relation. As it is well-known (and easy to check directly), we have $|V_i|=m$ and each vertex has degree $q+1$ in $G'$.

Let $\C P(G')$ be the set of all $2$-paths of $G'$. We choose every $2$-path in $\C P(G')$ randomly and independently with probability $(\log m)/m^{1/2}$. Denote by $\C P_0\subseteq \C P(G')$ the set of all chosen $2$-paths. Since  
\[
|\C P(G')|=\sum_{v\in V(G')}\binom{d(v)}{2}=\Theta \left(m^2\right),
\]
we have
\begin{equation}\label{eq:p0}
\mathbb{E}\big[\,|\C P_0|\,\big]=\Theta\left(m^2\right)\cdot \frac{\log m}{m^{1/2}}=\Theta\left(m^{3/2}\log m\right),
\end{equation}
which is exactly the order of magnitude in Condition $(i)$. Furthermore, since $\bigcup_{P\in \C P_0}P\subseteq G'$, $\C P_0$ satisfies conditions $(ii)$ and $(iii)$ trivially. To meet condition $(iv)$, we shall use the probabilistic deletion method to remove some $2$-paths from $\C P_0$ which form configurations prohibited in $(iv)$  and show that the number of such removed $2$-paths is $o(m^{3/2}\log m)$.

Given an integer $i$, an $i$-set $\{P_1,\dots,P_i\}\subseteq \C P(G')$ is called \emph{dense} if $v\left(\bigcup_{j\in [i]}P_j\right)\le i+1$, and \emph{sparse} otherwise. We note that every $1$-set $\{P_1\}$ is sparse as $v\left(P_1\right)=3>1+1$. We say that a dense $i$-set $\C S$ is a \emph{minimal dense $i$-set} if every $j$-subset of $\C S$ is sparse for every $j\in [i-1]$. An easy induction argument gives the following proposition.

\begin{prop}\label{pr:minimal}
    Every dense $i$-set with $i\ge 1$ contains a minimal dense $j$-set as a subset for some $1\le j\le i$.
\end{prop}
\hide{\begin{proof}[Proof of Proposition~\ref{pr:minimal}]
    Firstly, every dense $2$-set is minimal since all $1$-sets are sparse. Thus, the statement holds for $i=2$. Now assume that the proposition is true for all $j\le i-1$. Let us consider a fixed arbitrary dense $i$-set $\C S$. If $\C S$ is minimal, then we are done. If not, then $\C S$ contains a dense $k$-set $\C S'$ as a subset for some $k<i$ by the definition. We apply induction step on $\C S'$, and obtain that either $\C S'$ is minimal or $\C S'$ contains a smaller minimal dense subsets. Both cases imply that $\C S$ contains a smaller minimal dense $j$-set as a subset with $j\le k<i$. 
\end{proof}
}

Let us count the number of minimal dense $i$-sets of $\C P(G')$ in the following claim. 

\begin{claim}\label{cl:count}
    Given an integer $i\ge 2$, the number of minimal dense $i$-sets is $O\left(m^{1+\frac{i}{2}}\right)$.
\end{claim}
\begin{proof}[Proof of Claim~\ref{cl:count}]
    Fix an $i\ge 2$. Let $N_i$ denote the number of minimal dense $i$-sets. For $k\le i+1$, we define $N_{i,k}$ to be the number of minimal dense $i$-sets $\{P_1,\dots,P_i\}\subseteq \C P(G')$ satisfying $v\left(\bigcup_{j\in [i]}P_j\right)=k$. Recalling the definition of dense $i$-sets, we have $N_i=\sum_{k\in [i+1]}N_{i,k}$. Hence, it suffices to prove that $N_{i,k}= O\left(m^{1+\frac{i}{2}}\right)$ for every $k\in [i+1]$. 

    Let $k\in [i+1]$ be fixed. Consider a minimal dense $i$-set $\C S:=\{P_1,\dots,P_i\}\subseteq \C P(G')$ with $v\left(\bigcup_{j\in [i]}P_j\right)=k$. We claim that the union graph $\bigcup_{j\in [i]}P_j$ is connected. If not, then $\bigcup_{j\in [i]}P_j$ consists of several connected components, which will give a natural partition of $\C S$ (since every path $P_j\in \C S$, $V(P_j)$ entirely lies in one component). Now assume that $\bigcup_{j\in [i]}P_j$ has $t\ge 2$ components $C_1,\dots, C_t$. For $\ell\in [t]$, let $\C S_\ell\subseteq \C S$ be the proper subset of paths of $\C S$ which entirely lie in the component $C_\ell$. Then there must exist some $\C S_\ell\subseteq \C S$ with $\ell\in [t]$ which is a dense $|\C S_\ell|$-set, since otherwise we can derive the contradiction that
    \[
    v\Big(\bigcup_{j\in [i]}P_j\Big)=\sum_{\ell\in [t]}v\left(C_\ell\right)=\sum_{\ell\in [t]}v\Big(\bigcup_{P\in \C S_\ell}P\Big)\ge \sum_{\ell\in [t]}\left(|\C S_\ell|+2\right)=i+2t>i+2.
    \]
    However, this contradicts to the fact that $\C S$ is a minimal dense $i$-set.

Now we know that each minimal dense $i$-set $\C S:=\{P_1,\dots,P_i\}\subseteq \C P(G')$ with $v\left(\bigcup_{j\in [i]}P_j\right)=k$ corresponds to a connected graph $\bigcup_{j\in [i]}P_j$ on $k$ vertices in $G'$. On the other hand, a $k$-vertex connected subgraph of $G'$ can be the union graph for at most $\binom{k^3}{i}$ minimal dense $i$-sets $\C S:=\{P_1,\dots,P_i\}\subseteq \C P(G')$ with $v\left(\bigcup_{j\in [i]}P_j\right)=k$, since $k^3$ is a trivial upper bound on the number of $2$-paths in this subgraph. Let $\C F_k$ be the set of all $k$-vertex connected subgraphs of~$G'$. Then $N_{i,k}=O\left(|\C F_k|\right)$. 

Let $\C T_k$ be the set of all $k$-vertex trees in $G'$. For every connected $k$-vertex graph $F$ in $\C F_k$, $F$ contains a spanning tree $T\subseteq F$ as a subgraph. On the other hand, every $k$-vertex tree $T\in \C T_k$ can serve as a spanning tree for at most constant number of $k$-vertex connected subgraphs. Therefore, $|\C F_k|= O(|\C T_k|)$. By the regularity of $G'$, we can bound $|\C T_k|$ as follows:
\[
|\C T_k|\le 2m\cdot (k-1)!\cdot  \big(\Delta(G')\big)^{k-1}=O(m\cdot m^{\frac{k-1}{2}})= O(m^{1+\frac{i}{2}}).
\]
This bound is obtained by choosing vertex in $G'$ and then iteratively building a tree by choosing a selected vertex and adding one of its neighbours in $G'$. Thus we conclude that
\[
N_{i,k}= O(|\C F_k|)= O(|\C T_k|)= O(m^{1+\frac{i}{2}}),
\]
as claimed.\end{proof}

Let $X_i$ be the number of minimal dense $i$-sets in which every $2$-paths has been chosen. Then, by Claim~\ref{cl:count}, 
\begin{equation}\label{eq:minimal}
\mathbb{E}\big[\,\sum_{i\in[8]}X_i\,\big]=\sum_{i\in [8]}\mathbb{E}[\,X_i\,]\le \sum_{i\in [8]}\Big(\frac{\log m}{m^{1/2}}\Big)^i\cdot O\left(m^{1+\frac{i}{2}}\right)= O(m\log^8 m).
\end{equation}
Let $\C P\subseteq \C P_0$ be the set obtained from $\C P_0$ by removing one $2$-path from each minimal dense $i$-set with $i\in [8]$. Then $\C P$ inherits properties $(ii)$ and $(iii)$ from $\C P_0$ as $\bigcup_{P\in \C P}P$ is a subgraph of $\bigcup_{P\in \C P_0}P$. Also, it follows from~(\ref{eq:p0}) and~(\ref{eq:minimal}) that 
\[
\mathbb{E}\big[\,|\C P|\,\big]\ge \mathbb{E}\big[\,\C |\C P_0|-\sum_{i\in[8]}X_i\,\big]\ge \Theta(m^{3/2}\log m)-O(m\log ^8m)=\Theta(m^{3/2}\log m).
\]
Take a deterministic outcome $\C P$ with $|\C P|=\Omega(m^{3/2}\log m)$. Then, $\C P$ avoids all minimal dense $i$-sets with $i\in [8]$. By Proposition~\ref{pr:minimal}, $\C P$  avoids all dense $i$-sets for $i\in [8]$, proving Item $(iv)$. This finishes the proof.
\end{proof}


The construction of $F$ to be utilized in Theorem~\ref{thm:highgirth} is presented below.

\begin{lemma}\label{lem:const}
    For an infinite sequence of $m$, there exists a $3$-graph $F$ satisfying the following properties:
    \begin{enumerate}[label=(\alph*)]
        \item $F$ is $4$-free and $8$-free,
        \item $F$ is $k^-$-free for every $k\in [2,7]$.
        \item $|F|=\Omega(m^{3/2}\log m)$,
        \item $|\p{\le 4}{F}|=\frac{8}{3}\,|F|+O(m^{3/2})$.
    \end{enumerate}
\end{lemma}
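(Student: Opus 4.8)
\textbf{Proof proposal for Lemma~\ref{lem:const}.}

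The plan is to take the $3$-graph $F$ built ``on top'' of the family $\C P$ from Lemma~\ref{lm:2-paths} as described in the informal discussion: for each $2$-path $P_i=\{u_ia_i, a_iv_i\}\in\C P$ introduce two brand-new vertices $b_i,c_i$ and add the three edges of $R_i=\{a_ib_ic_i, b_iu_iv_i, c_iu_iv_i\}$. Since each $R_i$ contributes $3$ edges and the copies of $R$ are edge-disjoint (distinct paths give distinct $\{b_i,c_i\}$, hence distinct edges), we immediately get $|F|=3|\C P|=\Omega(m^{3/2}\log m)$, which is Item~(c). For Item~(d), I would first show that the only pairs $\CI$-claimed by a single copy $R_i$ with index $\le 4$ are: the three pairs inside each of the three edges of $R_i$ (these are $1$-claimed), plus the two ``new'' pairs $a_iu_i, a_iv_i$ which are $\{1\}$-out $\{3\}$-in claimed (the $3$-configuration being $R_i$ itself, $3r-2\cdot 3+2 = 3\cdot 3 - 4 = 5 = v(R_i)$). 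A short case check (there are only $3$ edges, so $i\in\{1,2,3\}$, and for $i=2$ one checks $v$ of any two edges of $R_i$ is $\ge 5 > 2\cdot 3-2\cdot 2+2 = 4$, so no $2$-configuration appears inside a single $R_i$) gives that each $R_i$ claims at most $3\cdot 3 + 2 = 11$ pairs but, more to the point, contributes (the edge $a_ib_ic_i$ has $3$ pairs all involving a new vertex; $b_iu_iv_i$ and $c_iu_iv_i$ each contribute one new pair $b_iu_i,b_iv_i$ resp.\ $c_iu_i,c_iv_i$ and share the pair $u_iv_i$; plus $a_iu_i,a_iv_i$) — the bookkeeping here should be arranged so that the total is $\tfrac{8}{3}|F| + (\text{shadow terms})$. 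Concretely: $|F|/3$ copies of $R$, each copy giving $8$ ``private'' pairs (namely $a_ib_i, a_ic_i, b_ic_i, b_iu_i, b_iv_i, c_iu_i, c_iv_i$, and then the pairs $a_iu_i,a_iv_i$ — that is actually $9$; I will need to recount, but the point is $\tfrac{8}{3}|F|$ is $8$ per copy), plus the pairs coming from the union graph $\bigcup P$, namely $a_iu_i, a_iv_i$ and $u_iv_i$, whose total number is $O(m^{3/2})$ by Item~(ii) of Lemma~\ref{lm:2-paths}. The content of~(d) is thus that no pair is double-counted across copies except those living inside $\bigcup_{P\in\C P}P$, which follows because the only vertices shared between different $R_i$'s are the path vertices $a,u,v$, so a pair claimed by two distinct copies lies in the union graph, and there are only $O(m^{3/2})$ such pairs.

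The heart of the lemma is Items~(a) and~(b): $F$ contains no $k$-configuration for $k=8$, no $4$-configuration, and no $\ell^-$-configuration for $\ell\in[2,7]$. The strategy is the standard one for these ``high-girth'' constructions. Suppose $J\subseteq F$ is a configuration with $|J|=\ell$ edges; partition $E(J)$ according to which copy $R_i$ each edge belongs to, say $J$ meets copies $R_{i_1},\dots,R_{i_t}$ in $\ell_1,\dots,\ell_t$ edges respectively with $\sum \ell_j = \ell$. The key estimate is a lower bound on $v(J)$ in terms of $\ell$ and $t$: each copy $R_{i_j}$ restricted to $\ell_j\le 3$ edges spans at least a certain number of vertices, and the overlaps between copies are confined to the path vertices, which are controlled by property~(iv) of Lemma~\ref{lm:2-paths} (any $t$ paths span $\ge t+2$ vertices) together with $C_4$- and $C_5$-freeness and triangle-freeness of $\bigcup P$ (property~(iii)). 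I expect the right inequality to be roughly $v(J)\ge r\ell - 2\ell + 2$ with equality analysis: when $\ell_j = 3$ (a full copy $R$) we ``lose'' the maximum, i.e.\ a full copy is exactly a $3$-configuration on $5$ vertices; when $\ell_j\in\{1,2\}$ the copy is strictly sparser than extremal. Summing and using that the union of the underlying paths is sparse (girth $\ge 6$) should force $v(J) > r\ell - 2\ell + 1$ unless $\ell = 3t$ and every $\ell_j = 3$, i.e. $J$ is a disjoint-ish union of $t$ full copies of $R$; and then a separate argument using property~(iv) (the $t$ paths span $\ge t+2$ vertices, and the $b_i,c_i$ are all new) shows $v(J) = 5t - (\text{overlaps in path vertices})$, which one checks is $> 3\cdot 3t - 2\cdot 3t + 2 = 3t+2$ for... hmm, $5t$ vs $3t+2$: this is $\ge$ iff $2t\ge 2$, equality at $t=1$. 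So a single copy of $R$ is the unique $3$-configuration, and for $t\ge 2$ even a union of full copies is not an $\ell$-configuration. This rules out $8 = |J|$ (since $8$ is not a multiple of $3$, $J$ cannot be a union of full copies, so it is strictly sparse) and also rules out all $\ell^-$-configurations for $\ell\in[2,7]$ and the $4$-configuration. I will organize this as a single case analysis on $t$ and the multiset $(\ell_1,\dots,\ell_t)$.

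The main obstacle, and where I would spend the most care, is making the vertex-counting inequality for $v(J)$ tight enough and correctly handling the interaction between the copies $R_i$ through shared path-vertices. Specifically: two copies $R_i, R_{i'}$ can share one path-vertex, or — if the two paths $P_i, P_{i'}$ share an edge or two vertices — more; property~(iv) forbids two paths from spanning only $3$ vertices, and $C_4$-freeness forbids two paths sharing both endpoints while having different midpoints, so the sharing is quite limited, but I must enumerate the possibilities (share $0$, $1$, or $2$ path-vertices; if $2$, the configuration of $\bigcup P$ that arises and whether girth $\ge 6$ excludes it) and check the vertex count in each. The ``$\ell_j\le 2$'' sub-copies need their own small table: a single edge of $R$ spans $3$ vertices; two edges of $R$ span either $4$ (the diamond $\{b_iu_iv_i, c_iu_iv_i\}$, or $\{a_ib_ic_i, b_iu_iv_i\}$... wait that's $a_i,b_i,c_i,u_i,v_i$, $5$ vertices) — so I need to be careful which pairs of edges of $R$ form a diamond. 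Once these local counts are in hand, the global bound follows by adding up contributions and subtracting the at-most-controlled overlaps, and I am confident the arithmetic closes with room to spare except exactly at $t=1,\ell=3$ (the intended tight case, which is harmless since $R$ is $4$-free and $8$-free by inspection and is not itself forbidden). A secondary obstacle is double-checking the $\ell^-$-configuration bounds $r\ell - 2\ell + 1 = \ell+1$ for $r=3$: I need $v(J)\ge \ell+2$ for every sub-$8$ configuration with $\ell\ge 2$ edges, which is \emph{exactly} the statement that no $\ell$ edges of $F$ span only $\ell+1$ vertices — and this should reduce cleanly to property~(iv) once the new vertices $b_i,c_i$ are accounted for.
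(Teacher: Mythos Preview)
Your construction and overall strategy match the paper's, and Items~(c) and~(b) are essentially right: the vertex bound $v(J)\ge \ell+2$ for any $\ell\in[2,8]$ edges follows exactly as you outline, by combining property~(iv) of Lemma~\ref{lm:2-paths} (the $t$ underlying paths span $\ge t+2$ vertices) with a trimming count (removing $3t-\ell$ edges from $\bigcup R_{i_j}$ removes at most $3t-\ell$ vertices). This is the paper's Claim~\ref{cl:k-free}.

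There is, however, a genuine gap in your argument for Item~(a). You assert that ``when $\ell_j\in\{1,2\}$ the copy is strictly sparser than extremal'', and hence equality $v(J)=\ell+2$ forces all $\ell_j=3$, which would rule out $\ell\in\{4,8\}$ for free. This is false: a single edge of $R$ spans $3=1+2$ vertices (tight), and the diamond $\{b_iu_iv_i,c_iu_iv_i\}$ spans $4=2+2$ vertices (also tight). So partial copies are \emph{not} automatically slack, and the implication ``$8$ is not a multiple of $3$ $\Rightarrow$ no $8$-configuration'' does not follow from the vertex count alone. What actually rules out $4$- and $8$-configurations is property~(iii) (triangle-, $C_4$-, and $C_5$-freeness of $\bigcup P$), used not to establish the inequality $v(J)\ge\ell+2$ but to analyse its equality cases. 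The paper does this via a separate structural step (Proposition~\ref{pr:trim-1-2}): first showing that any $k$-configuration with $3\le k\le 8$ must contain at least one \emph{full} block $R_i$, then tracing how the remaining partial blocks attach. In each case the attachment forces two underlying paths to share endpoints or a midpoint-endpoint pair in a way that creates a $C_3$, $C_4$, or $C_5$ in $\bigcup P$. You mention girth but misplace where it enters; the case analysis is short but not bypassable.

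For Item~(d), your bookkeeping is tangled (you list seven pairs, then say nine, then eight). The eight $1$-claimed pairs per block are $a_ib_i,a_ic_i,b_ic_i,b_iu_i,b_iv_i,c_iu_i,c_iv_i,u_iv_i$; the two $\overline{1}3$-claimed pairs are $a_iu_i,a_iv_i$, which lie in $\bigcup P$ and hence contribute $O(m^{3/2})$. You also need three facts you do not mention: that $\p{1}{R_i}\cap\p{1}{R_j}=\varnothing$ for $i\ne j$ (else a $2$-configuration across blocks gives a $C_4$), that $\pp{1}{2}{F}=\varnothing$ (every $2$-configuration lies in one block and is $1$-claimed there), and that $4$-freeness gives $\p{4}{F}=\varnothing$, so $\p{\le 4}{F}=\p{1}{F}\cup\pp{1}{3}{F}$.
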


\begin{proof}
    For $m=q^2+q+1$ with sufficiently large prime power $q$, let $\C P=\{P_1,\dots,P_{|\C P|}\}$ be the family of $2$-paths returned by Lemma~\ref{lm:2-paths}. For a $2$-path $P_i\in \C P$, we denote by $a_i$ the internal vertex of $P_i$, and denote by $u_i,v_i$ the two endpoints of $P_i$. Now we construct $F$, using $\C P$. For each $P_i\in \C P$,
    we add new vertices $b_i$ and $c_i$ to $V(F)$, and add all edges of $R_i:=\{a_ib_ic_i,b_iu_iv_i,c_iu_iv_i\}$ to~$F$. Thus $R_i$ is a copy of $R$ that sits on top of $P_i$; we also say that $P_i$ \emph{supports} $R_i$. Let $F$ be the union of the $3$-graphs $R_i$ for $i\in [\,|\C P|\,]$. We call each such copy of $R$ a \emph{block}.

Let us first calculate $|F|$. We claim that $|F|=3\,|\C P|$. By our construction of $F$, we know that each $R_i$ contributes 3 edges to $E(F)$. It is enough to show that for distinct $i,j\in [\,|\C P|\,]$, we have $E(R_i)\cap E(R_j)=\varnothing$. Indeed, each edge of $R_i$ contains either $b_i$ or $c_i$. Since $b_i,c_i\notin V(R_j)$, no edge of $R_i$ belongs to $E(R_j)$. Thus, Item $(c)$ holds.

To prove Item $(b)$, it suffices to have the following claim.  

\begin{claim}\label{cl:k-free}
    For every $k\in [2,8]$, every $k$ edges of $F$ span at least $k+2$ vertices. 
\end{claim}

\begin{proof}[Proof of Claim~\ref{cl:k-free}]
    Consider an arbitrary set $S:=\{e_1, \dots, e_k\}$ of $k$ edges of~$F$. We can think of $S$ as a subgraph of~$F$. By our construction of $F$, each edge of $S$ belongs to some copy of $R$. Let us assume that edges of $S$ come from $j$ blocks, say $R_1,\dots, R_j$. Then we have $j\le k$ trivially. Each $R_i$, with $i\in [j]$, might contribute $1,2$ or $3$ edges of~$S$. In other words, one can obtain $S$ from $\bigcup_{i\in [j]}R_i$ by trimming $0,1$ or $2$ edges which do not belong to $S$ from each $R_i$. Furthermore, when trimming an edge $e\notin S$, if some vertex $v$ of $e$ does not lie in $V(S)=\cup_{i=1}^k e_i$, then we remove it. 
    
    For every $i\in [j]$, let $t(R_i)$ be the number of trimmed edges of $R_i$, and let $r(R_i)$ be the number of removed vertices of $R_i$. Then $t(R_i)\in \{0,1,2\}$ for every $i\in [j]$. In the proposition below, we give a relation between $t(R_i)$ and $r(R_i)$.

\begin{prop}\label{pr:count-trim}
       For every $R_i$ with $i\in [j]$, it holds that $r(R_i)\le t(R_i)$.
\end{prop}

\begin{proof}[Proof of Proposition~\ref{pr:count-trim}]
    We split the proof into three cases, depending on whether $t(R_i)=1$, $2$ or $0$. Recall that $R_i:=\{a_ib_ic_i,b_iu_iv_i,c_iu_iv_i\}$. 
    When $t(R_i)=1$ and $a_ib_ic_i$ is removed, both $b_iu_iv_i$ and $c_iu_iv_i$ belong to $S$. Hence, $\{b_i,c_i,u_i,v_i\}\subseteq V(S)$ and $a_i$ is the only possible vertex which may be removed from $V(R_i)$. If $b_iu_iv_i$ (resp.\ $c_iu_iv_i$) is removed and $a_ib_ic_i,c_iu_iv_i\in S$ (resp.\ $a_ib_ic_i,b_iu_iv_i\in S$), then $\{a_i,b_i,c_i,u_i,v_i\}\subseteq V(S)$ and no vertex of $R_i$ can be removed. Thus $r(R_i)\le 1=t(R_i)$.

    When $t(R_i)=2$, the conclusion follows by observing that at least 3 vertices of $R_i$ remain so at most 2 vertices may be removed. 

    When $t(R_i)=0$, meaning that no edge is trimmed from $R_i$, all three edges  or $R_i$ belong to $S$, and no vertex of $R_i$ can be removed. 
\end{proof}

Since $j\le k\le 8$, we may use Lemma~\ref{lm:2-paths}~(iv) for the $j$-set $\{P_1,\dots, P_j\}$ to obtain that $v(\bigcup_{i\in [j]}P_i)\ge j+2$. Recall that $R_i$ is a copy of $R$ that is based on $P_i$ and its vertex set is obtained by adding two new vertices. Thus, we have
\begin{equation}\label{eq:vtx}
v\big(\bigcup_{i\in [j]}R_i\big)=v\big(\bigcup_{i\in [j]}P_i\big)+2j\ge 3j+2.
\end{equation}

Furthermore, the fact that $E(R_k)\cap E(R_\ell)=\varnothing$ for distinct $k,\ell\in [j]$ yields that $|\bigcup_{i\in [j]}R_i|=3j$. Therefore, in total there are exactly $3j-k$ edges trimmed from $\bigcup_{i\in [j]}R_i$ to obtain~$S$. 
On the other hand, by Proposition~\ref{pr:count-trim}, the number of vertices removed from $\bigcup_{i\in [j]}R_i$ is at most
\begin{equation}\label{eq:del-vtx}
\sum_{i\in [j]}r(R_i)\le \sum_{i\in [j]}t(R_i)=3j-k.
\end{equation}
Hence, by~(\ref{eq:vtx}) and~(\ref{eq:del-vtx}), the number of remaining vertices, which is exactly $|V(S)|$, is at least
\begin{equation}\label{eq:k+2}
3j+2-(3j-k)=k+2,
\end{equation}
as desired. This finishes the proof of Claim~\ref{cl:k-free}.
\end{proof}

\noindent\textbf{Remark.} 
In the proof of Claim~\ref{cl:k-free}, for $k\in [2,8]$, if $k$ edges span exactly $k+2$ vertices, it means that all the inequalities in~(\ref{eq:vtx}) and~(\ref{eq:del-vtx}) are equalities. In other words, the corresponding $2$-paths $\{P_1,\dots, P_j\}$ satisfy $v(\bigcup_{i\in [j]}P_i)= j+2$. Also, for those $R_i$ with $t(R_i)=1$ (resp.\ $t(R_i)=2$), we have $r(R_i)=1$ (resp.\ $r(R_i)=2$). In particular, by the proof of Proposition~\ref{pr:count-trim}, if $t(R_i)=1$ and $r(R_i)=1$, then the unique edge we trim from $R_i$ is $a_ib_ic_i$ and the removed vertex of $R_i$ is $a_i$.

Now we turn to Item $(a)$. First, we prove the following helpful proposition.

\begin{prop}\label{pr:trim-1-2}
Suppose that $S$ is a $k$-configuration of $F$ with $3\le k\le 8$ and $S$ consists of edges from $j$ blocks $R_1,\dots ,R_j$. Let $I:=\{i\in [j]:t(R_i)=0\}$ and $R_{I}:=\bigcup_{i\in I}R_i$. Then 
\begin{enumerate}[label=(\Alph*)]
    \item $I\neq\varnothing$,
    \item $R_I$ is a $3|I|$-configuration,
    \item $R_I$ can be obtained from $S$ by iteratively deleting edges $e_{k-3|I|}, \dots, e_2, e_{1}$, such that each $e_\ell$ with $\ell \in [\,k-3|I|\,]$ shares exactly two vertices with $R_I\cup \{e_1, \dots, e_{\ell-1}\}$. Furthermore, each $e_\ell$ comes from some block $R_i$ with $t(R_i)\in\{1,2\}$, and $e_\ell$ is $b_iu_iv_i$ or $c_iu_iv_i$. 
\end{enumerate}
\end{prop}

\begin{proof}
By Claim~\ref{cl:k-free}, $S$ spans exactly $k+2$ vertices. Consider any $R_i$ that contributes 1 edge to $S$, that is, satisfies $t(R_i)=2$. If $a_ib_ic_i\in S$ and $b_iu_iv_i, c_iu_iv_i\notin S$ then, since $b_i,c_i$ cannot lie in other edges of $S$ (by our construction of $F$), we can remove the edge $a_ib_ic_i$ from $S$ together with two vertices $b_i,c_i$ and obtain a $k^-$-configuration, which contradicts Claim~\ref{cl:k-free}. Therefore, either $b_iu_iv_i\in S$ or $c_iu_iv_i\in S$. 

For each $R_i$ contributing 2 edges, by the remark after the proof of Claim~\ref{cl:k-free}, we have that $b_iu_iv_i, c_iu_iv_i\in S$ and $a_ib_ic_i\notin S$. For each such $i$, we remove first $b_iu_iv_i$ and then $c_iu_iv_i$ from~$S$. Meanwhile, $b_i$ and the $c_i$ can also be deleted since they do not belong to any other edges of~$S$. Moreover, $u_i,v_i$ must lie in some other edges of the remaining $3$-graph, since otherwise after deleting $b_iu_iv_i$ (or $c_iu_iv_i$) we would gain a $j^-$-configuration with $j\in [k]$, which contradicts Claim~\ref{cl:k-free}. Therefore, during each removal, we lose exactly one edge and one associated vertex. After cleaning all edges from $R_i$ with $t(R_i)=2$, we obtain a $k_0$-configuration $S'$ with exactly $k_0+2$ vertices with $k_0\in [k]$. Then we remove edges from blocks $R_i$ with $t(R_i)=1$ from $S'$. For each $R_i$ with $t(R_i)=1$, we remove $b_iu_iv_i$ and $c_iu_iv_i$ from $S'$. By similar arguments, we conclude that during each edge removal, we lose exactly one edge and one vertex. Indeed, if $|I|\ge 1$, then the remaining $3$-graph is $R_I$ with $3|I|$-edges and $3|I|+2$ vertices. (If $I=\varnothing$, then $R_I$ is the empty 3-graph). We proved Items $(B)$ and $(C)$.

Suppose on the contrary that Item $(A)$ fails, that is $I=\varnothing$, which implies that every block $R_i$ contributes one or two edges. By the argument above, we can enumerate $S=\{e_k, \dots, e_2, e_1\}$ so that each $e_\ell$ with $\ell\in [k]\setminus \{1\}$ shares exactly two vertices with $e_1\cup \dots \cup e_{\ell-1}$. For $e_2$, we have $e_2\cap e_1=2$. If $e_1$ and $e_2$ are from distinct blocks, say $e_1\in R_1$ and $e_2\in R_2$, then $R_1$ intersects $R_2$ on $\{u_1,v_1\}=\{u_2,v_2\}$ as $e_1\in \{b_1u_1v_1,c_1u_1v_1\}$ and $e_2\in \{b_2u_2v_2,c_2u_2v_2\}$. Then the paths $P_1$ and $P_2$, which support $R_1$ and $R_2$ respectively, share two endpoints, giving a $C_4$. The other case is that $e_2,e_1$ come from the same block, say $R_1$, and $e_1=b_1u_1v_1$ and $e_2=c_1u_1v_1$ or vice versa. Then for $e_3$, since $t(R_1)\in \{1,2\}$, $e_3$ is from some other block, say $R_3$, and $e_3\in \{b_3u_3v_3,c_3u_3v_3\}$ shares exactly two vertices with $e_1\cup e_2$. Since all $b_i,c_i$ cannot be shared by distinct blocks, we know that the shared vertices are $\{u_1,v_1\}=\{u_3,v_3\}$. It follows that $P_1$ and $P_3$ form a $C_4$, which is a contradiction to Lemma~\ref{lm:2-paths}~(iii).\end{proof}

Assume that $F$ contains a $4$-configuration $S$ coming from $j$ blocks $R_1,\dots ,R_j$. By Claim~\ref{cl:k-free}, $S$ spans exactly $6$ vertices. By Proposition~\ref{pr:trim-1-2}, there exists some block contributing $3$ edges to~$S$ and thus $j=2$. Without loss of generality, we assume that $t(R_1)=0$. Then, $R_2$ contributes one edge $e\in \{b_2u_2v_2, c_2u_2v_2\}$ to $S$, and $e$ shares two vertices with $R_1$. Since $b_2,c_2$ cannot lie in other blocks, we have $e\cap V(R_1)=\{u_2,v_2\}$. On the other hand, since $b_1,c_1$ cannot lie in other blocks, we also have $e\cap V(R_1)\subseteq \{a_1,u_1,v_1\}$. If $e\ni a_1$ then $P_1\cup P_2$ contains a triangle $a_2u_2v_2$, if $e\cap V(R_1)=\{u_1,v_1\}$ then $P_1\cup P_2$ contains a $4$-cycle $a_1u_1a_2v_1$, see Figure~\ref{fi:4Free}. Both statements contradict Lemma~\ref{lm:2-paths}~(iii). Therefore, $F$ is $4$-free. 
\begin{figure}[t]
    \begin{center}
\begin{tikzpicture}[scale=0.7]
\fill[color=red,fill opacity=0.2 ,draw=black, line width=0.6pt] (0,2.1) -- (1,0.6) -- (-1,0.6) -- cycle;
\fill[color=red,fill opacity=0.2, draw=black, line width=0.6pt] (1,0.6) -- (-1,-1) -- (1,-1) -- cycle;
\fill[color=red,fill opacity=0.2, draw=black, line width=0.6pt] (-1,0.6) -- (-1,-1) -- (1,-1) -- cycle;
 \node[scale=0.8] at (0.5,2.37) {$a_1 (v_2)$};
    \node[scale=0.8] at (1.3,0.55) {$c_1$};
    \node[scale=0.8] at (-1.2,0.55) {$b_1$};
    \node[scale=0.8] at (-1.2,-1.3) {$u_1 (u_2)$};
    \node[scale=0.8] at (1.3,-1.3) {$v_1$};
    \node[scale=0.8] at (-3,-0.7) {$b_2$};
    \node[scale=0.8] at (-2,2.9) {$c_2$};
    \node[scale=0.8] at (-4.3,1.6) {$a_2$};
    \draw[line width=0.7pt,dashed] (0,2.1)--(1,-1);
    \draw[color=blue,line width=0.7pt,dashed] (-4,1.6)--(-1,-1);
    \draw[color=blue,line width=0.7pt,dashed] (-4,1.6)--(0,2.1);
\fill[color=red,fill opacity=0.2, draw=black, line width=0.6pt] (-1,-1) -- (-3,-0.4) -- (0,2.1) -- cycle;
\fill[color=red,fill opacity=0, draw=black, line width=0.6pt] (-1,-1) -- (0,2.1) -- (-2,2.7) -- cycle;
\fill[color=red,fill opacity=0, draw=black, line width=0.6pt] (-4,1.6) -- (-3,-0.4) -- (-2,2.7) -- cycle;
 \draw[color=blue,line width=0.7pt] (0,2.1)--(-1,-1);

\draw[color=blue,line width=0.7pt,dashed] (6,3.6)--(5,0.5);
\draw[color=blue,line width=0.7pt,dashed] (6,3.6)--(7,0.5);
\draw[color=blue,line width=0.7pt,dashed] (6,-2.4)--(7,0.5);
\draw[color=blue,line width=0.7pt,dashed] (6,-2.4)--(5,0.5);
\fill[color=red,fill opacity=0.2 ,draw=black, line width=0.6pt] (6,3.6) -- (7,2.1) -- (5,2.1) -- cycle;
\fill[color=red,fill opacity=0.2, draw=black, line width=0.6pt] (7,2.1) -- (5,0.5) -- (7,0.5) -- cycle;
\fill[color=red,fill opacity=0.2, draw=black, line width=0.6pt] (5,2.1) -- (5,0.5) -- (7,0.5) -- cycle;  
\fill[color=red,fill opacity=0.2, draw=black, line width=0.6pt] (5,-0.9) -- (5,0.5) -- (7,0.5) -- cycle;
\fill[color=red,fill opacity=0, draw=black, line width=0.6pt] (7,-0.9) -- (5,0.5) -- (7,0.5) -- cycle;  
\fill[color=red,fill opacity=0, draw=black, line width=0.6pt] (5,-0.9) -- (7,-0.9) -- (6,-2.4) -- cycle;
\node[scale=0.8] at (6,3.8) {$a_1$};
\node[scale=0.8] at (4.7,2.1) {$b_1$};
\node[scale=0.8] at (7.3,2.1) {$c_1$};
\node[scale=0.8] at (4.3,0.5) {$(u_2) u_1$};
\node[scale=0.8] at (7.7,0.5) {$v_1 (v_2)$};
\node[scale=0.8] at (4.7,-0.9) {$b_2$};
\node[scale=0.8] at (7.3,-0.9) {$c_2$};
\node[scale=0.8] at (6,-2.6) {$a_2$};

\end{tikzpicture} 
\caption{An illustration of the proof that $F$ is $4$-free}\label{fi:4Free}
    \end{center}
\end{figure}

Suppose that $F$ contains an $8$-configuration $Q$ consisting of edges from $j$ blocks $R_1,\dots ,R_j$. By Proposition~\ref{pr:trim-1-2}, we have $1\le |I|\le 2$. Let $e_1,\dots, e_5$ be the sequence of edges returned by Proposition~\ref{pr:trim-1-2} $(C)$. First assume that $|I|=1$ and $t(R_1)=0$. Without loss of generality, suppose that $e_1$ belongs to $R_2$; thus $e_1\in \{b_2u_2v_2,c_2u_2v_2\}$. Since $b_2,c_2$ cannot lie in other blocks, we derive that $e_1\cap R_1=\{u_2,v_2\}$. Similarly to the arguments in the proof of $4$-freeness, we have $e\cap V(R_1)\subseteq \{a_1,u_1,v_1\}$. If $e\cap V(R_1)=\{a_1,u_1\}$ or $e\cap V(R_1)=\{a_1,v_1\}$, then $P_1\cup P_2$ contains a triangle $a_2u_2v_2$. If $e\cap V(R_1)=\{u_1,v_1\}$, then $P_1\cup P_2$ contains a $4$-cycle $a_1u_1a_2v_1$. Both cases contradict Lemma~\ref{lm:2-paths}~(iii). 

The remaining case is that $|I|=2$. Assume that $t(R_1)=t(R_2)=0$. Then, by Proposition~\ref{pr:trim-1-2} and Claim~\ref{cl:k-free}, $R_1\cup R_2$ is a $6$-configuration on $8$ vertices. Thus, $R_1$ shares two vertices with $R_2$. Again $b_1,c_1,b_2,c_2$ do not belong to other blocks. Therefore, $V(R_1)\cap V(R_2)\subseteq \{a_1,u_1,u_1\}$ and $V(R_1)\cap V(R_2)\subseteq \{a_2,u_2,u_2\}$. Note that $V(R_1)\cap V(R_2)$ cannot be $\{u_1,v_1\}$ or $\{u_2,v_2\}$, as otherwise $P_1\cup P_2$ contains a triangle or $C_4$. This means that $P_1$ and $P_2$ share one edge, and thus $P_1\cup P_2$ is a $3$-path or a star. In the notation of Proposition~\ref{pr:trim-1-2} $(C)$, 
assume that $e_1$ comes from $R_3$. By Proposition~\ref{pr:trim-1-2} $(C)$, we have $e_1\in \{b_3u_3v_3,c_3u_3v_3\}$. Then, $e_1$ shares two vertices $u_3,v_3$ with $R_1\cup R_2$ and $u_3,v_3\in \{a_1,a_2,u_1,v_1,u_2,v_2\}$. This implies that the endpoints of $P_3$ lie in $V(P_1\cup P_2)$. One can easily check $P_1\cup P_2\cup P_3$ must contain a triangle, $C_4$ or $C_5$, which is a contradiction to Lemma~\ref{lm:2-paths}~(iii). Thus, $F$ is $8$-free. We proved Item $(a)$ of Lemma~\ref{lem:const}.

Let us turn to Item $(d)$. Observe that every $3$-configuration in $F$ comes from one block by Proposition~\ref{pr:trim-1-2} $(A)$. Let us show that the same also holds for any
$2$-configuration. Suppose on the contrary that a $2$-configuration $\{e_1,e_2\}$ satisfies, say, $e_1\in R_1$ and $e_2\in R_2$ for distinct $R_1, R_2$. Then, $|e_1\cap e_2|=2$, which implies that $e_1\in \{b_1u_1v_1, c_1u_1v_1\}$ and $e_1\in \{b_1u_1v_1, c_1u_1v_1\}$. Again, since $b_1,b_2,c_1,c_2$ cannot lie in other blocks, we have $e_1\cap e_2=\{u_1,v_1\}=\{u_2,v_2\}$. This gives a $C_4$ in $P_1\cup P_2$, a contradiction. 

Recall that $F$ consists of $|\C P|=|F|/3$ copies of $R$. Thus, $\p{1}{F}=\bigcup_{i\in [\,|\C P|\,]}\p{1}{R_i}$. Each block $R$ satisfies $|\p{1}{R}|=8$. Furthermore, $\p{1}{R_i}\cap \p{1}{R_j}=\varnothing$ for every distinct $i,j\in [\,|\C P|\,]$, since otherwise we would obtain a $2$-configuration from distinct blocks. Hence, we have $|\p{1}{F}|=8|\C P|=\frac{8}{3}|F|$. Moreover, we claim that $\pp{1}{2}{F}=\varnothing$. Indeed, every $2$-configuration lies in a single block and by our construction, every $2$-claimed pair is also $1$-claimed. Now let us consider $\pp{1}{3}{F}$. By the arguments above, we know that every $3$-configuration is a block. For a block $R_i$, only $a_iu_i$ and $a_iv_i$ are \OutIn{1}{3}-claimed, and these two pairs are edges of the union graph $\bigcup_{P\in \C P}P$. Consequently, $|\pp{1}{3}{F}|\le |\bigcup _{P\in \C P}P|\le O(m^{3/2})$ by Lemma~\ref{lm:2-paths}. At the end, since $F$ is $4$-free, there is no pair $4$-claimed by~$F$. We conclude that $\p{\le 4}{F}=\p{\le 3}{F}=\frac{8}{3}|F|+O(m^{3/2})$. 

This finishes the proof of Lemma~\ref{lem:const}.    
\end{proof}

\begin{proof}[Proof of Theorem~\ref{thm:lb}]
   Let $F$ be the $3$-graph given by Lemma~\ref{lem:const} for some $m$. Then $F$ is $\cG{3}{8}$-free. By Theorem~\ref{thm:highgirth}, we obtain
   \[
   \liminf_{n\rightarrow \infty}\frac{f^{(3)}(n;10,8)}{n^2}\geq \frac{|F|}{2\,|\p{\le 4}{F}|}\ge \frac{3}{16}\cdot \frac{|F|}{|F|+O(m^{3/2})}.
   \]
   Since $|F|=\Omega(m^{3/2}\log m)=\omega(m^{3/2})$, we have $\pi (3,8)\ge \frac{3}{16}$ by taking $m\to \infty$.

\end{proof}

\section{Proof of the upper bound in Theorem~\ref{thm:main}}\label{se:upper}

For the upper bound, we will use the following lemma, proved by Delcourt and Postle~\cite{DelcourtPostle24}*{Theorem 1.7} for $r=3$ and by Shangguan~\cite{23sh}*{Lemma 5} for $r\ge 4$, which enables us to exclude smaller ``denser'' structures when considering the limit.

\begin{lemma}[\cite{23sh}*{Lemma 5}]\label{lem:ub}
    For all fixed $r\ge 3$ and $k\ge 3$,
    $$\limsup_{n\to \infty}\frac{f^{(r)}(n;rk-2k+2,k)}{n^2}\le \limsup_{n\to \infty}\frac{\ex (n,\cG{r}{k})}{n^2}.$$
\end{lemma}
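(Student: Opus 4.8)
Since $\C F^{(r)}(rk-2k+2,k)\subseteq\cG{r}{k}$, every $\cG{r}{k}$-free $r$-graph is automatically free of $k$-configurations, so $\ex(n,\cG{r}{k})\le f^{(r)}(n;rk-2k+2,k)$ for all $n$, and the inequality $\limsup_n \ex(n,\cG{r}{k})/n^2\le\limsup_n f^{(r)}(n;rk-2k+2,k)/n^2$ is automatic; the content of the lemma is the reverse inequality. The plan is to take a near-extremal $k$-free host $r$-graph, delete a negligible proportion of its edges so as to destroy every ``denser'' configuration, and then compare the two limits.

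The first step is to record the elementary consequences of $k$-freeness. If $e_1,\dots,e_t$ are distinct edges all containing a fixed pair $xy$ of vertices, then $\big|\{x,y\}\cup e_1\cup\dots\cup e_t\big|\le 2+t(r-2)=rt-2t+2$, so these $t$ edges already form a $t$-configuration; hence in a $k$-free $r$-graph no pair of vertices lies in $k$ or more edges, and, iterating the same idea, the number of edges through any fixed set that already carries a dense sub-configuration is bounded by a constant $C=C(r,k)$. In particular $e(H)=O(n^2)$. Moreover, for each $\ell\in[2,k-1]$ one can order the edges of any $\ell^-$-configuration so that each edge (after the first) meets the union of the earlier ones in at least two vertices — an $\ell^-$-configuration is too vertex-efficient to allow a loose attachment — so, choosing the first edge ($O(n^2)$ ways) and then each subsequent edge through an already-present pair ($O(1)$ ways, by the codegree bound), the number of copies in $H$ of each fixed $\ell^-$-configuration is $O(n^2)$.

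Now let $H$ be a $k$-free $r$-graph on $n$ vertices with $e(H)=f^{(r)}(n;rk-2k+2,k)$, and let $H'$ be obtained from $H$ by deleting every edge that lies in some $\ell^-$-configuration with $2\le\ell\le k-1$. Then $H'$ is $\cG{r}{k}$-free: it is still $k$-free (a subgraph of a $k$-free graph) and it is $\ell^-$-free for every $\ell\in[2,k-1]$, since every edge of such a configuration has been deleted. The whole difficulty is to bound $|H|-|H'|$, the number of edges lying in a dense configuration, by $o(n^2)$: the count above only yields $O(n^2)$ copies of each fixed dense configuration, hence only $O(n^2)$ deleted edges, which is of the same order as $e(H)$ itself and therefore useless. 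The resolution must use the maximality of $e(H)$, in the form of a stability/supersaturation statement: a $k$-free $r$-graph $H$ on $n$ vertices with $e(H)$ within $o(n^2)$ of $f^{(r)}(n;rk-2k+2,k)$ has only $o(n^2)$ edges lying in a dense configuration — morally, a near-extremal $k$-free graph is close to a partial Steiner system, whereas dense configurations force local wastefulness. The route I would try is an exchange argument: if $\Omega(n^2)$ edges of $H$ belonged to dense configurations, then around suitably chosen offending spots one should be able to delete a bounded number of edges and, using the codegree bounds together with an averaging/defect argument, re-insert at least as many (and, somewhere, strictly more) edges without creating a $k$-configuration, contradicting maximality. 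Making this precise — in particular controlling overlapping dense configurations and iterating the exchange — is the technical heart of the lemma, and is exactly where the arguments of Delcourt--Postle and of Shangguan do the real work; I expect this to be the main obstacle.

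Granting this stability input, the conclusion is immediate: $H'$ is a $\cG{r}{k}$-free $r$-graph on $n$ vertices with $e(H')\ge f^{(r)}(n;rk-2k+2,k)-o(n^2)$, so for every $\delta>0$ and all sufficiently large $n$,
$$\frac{\ex(n,\cG{r}{k})}{n^2}\ge\frac{e(H')}{n^2}\ge\frac{f^{(r)}(n;rk-2k+2,k)}{n^2}-\delta,$$
and taking $\limsup_{n\to\infty}$ and then $\delta\to 0$ gives $\limsup_n \ex(n,\cG{r}{k})/n^2\ge\limsup_n f^{(r)}(n;rk-2k+2,k)/n^2$, as required.
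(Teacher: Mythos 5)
You have correctly identified that the non-trivial inequality is $\limsup_n f^{(r)}(n;rk-2k+2,k)/n^2\le \limsup_n \ex(n,\cG{r}{k})/n^2$ (the reverse is immediate from $\C F^{(r)}(rk-2k+2,k)\subseteq\cG{r}{k}$), and your first move — start from a near-extremal $k$-free $H$ and delete edges so as to kill every $\ell^-$-configuration — is a reasonable opening. However, you leave the decisive step entirely open and say so yourself: you need that only $o(n^2)$ edges are lost, you observe that the naive count of edges lying in $\ell^-$-configurations is $O(n^2)$ (the same order as $e(H)$), and you then merely gesture at a ``stability/supersaturation'' exchange argument without carrying it out. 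This is a genuine gap, and the proposed ingredient — that a near-extremal $k$-free $r$-graph has $o(n^2)$ edges in $\ell^-$-configurations — is essentially where all the content of Lemma~\ref{lem:ub} lives; positing it as an input relocates the difficulty rather than resolving it. There is also a smaller technical slip: your $O(n^2)$ count of copies of a fixed $\ell^-$-configuration implicitly assumes the configuration can be built up edge-by-edge through previously-covered pairs, which holds for \emph{minimal} $\ell^-$-configurations (those are $2$-intersection-connected) but not for all of them; you should pass to minimal ones, which in any case suffices since every $\ell^-$-configuration contains a minimal one.

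Bear in mind that the present paper does not prove this lemma at all; it is quoted from Shangguan (Lemma~5 of~\cite{23sh}), who follows Delcourt and Postle (Theorem~1.7 of~\cite{DelcourtPostle24}). The mechanism there is not a supersaturation/exchange argument on extremal graphs. It exploits local structure of a $k$-free $r$-graph directly: every $1$-cluster (maximal subgraph connected through pairwise $\ge 2$-vertex intersections) has fewer than $k$ edges, and distinct $1$-clusters have disjoint $2$-shadows, so one can account for edges against covered vertex-pairs cluster by cluster and control the cleanup deterministically. I recommend consulting that source rather than attempting to reconstruct the argument from scratch, since the route you sketch would need, as a black box, a statement that is no easier than the lemma itself.
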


Lemma~\ref{lem:ub} gives that, when proving an upper bound on $\pi(r,8)$, it suffices to consider only $\cG{r}{8}$-free $r$-graphs.

Let us now briefly outline our proof strategy, which was inspired by~\cite{GlockJoosKimKuhnLichevPikhurko24,GKLPS24}. Assume that $G$ is an $n$-vertex $\cG{r}{8}$-free $r$-graph. Starting with the trivial edge partition of $E(G)$ into single edges, we apply some merging rules and iteratively merge parts into larger clusters. The $\cG{r}{8}$-freeness allows us to describe the possible structure of each final cluster. We then assign weights to some vertex pairs within each final cluster (which is a subgraph of $G$). During this step, we ensure that every pair in $\binom{V(G)}{2}$ receives a total weight at most 1. This method enables us to derive an upper bound  on $|G|$ by upper bounding the ratio of the number of edges to the sum of weights for each possible cluster.

We split the proof of the upper bound in Theorem~\ref{thm:main} into two parts. First, we will introduce the merging operation and analyse the structure of the final edge partition. In the second part, we assign weights and finalise the proof.

\subsection{Merging and analysing}

\subsubsection{Some common definitions and results}\label{se:UpperCommon}
Recall that for an $r$-graph $F$ and a pair $uv$, the set $\CI_F(uv)$ 
consists of all integers $j\ge 0$ such that $F$ has $j$ edges that together with $uv$ include at most $rj-2j+2$ vertices. 
Note that, by definition, 0 always belongs to $\CI_{G}(uv)$, which is notationally convenient in the statement of the following easy but very useful observation.

\begin{lemma}[\cite{GKLPS24}*{Lemma 5.1}]\label{lm: 2} For any $\C F^{(r)}(rk-2k+2,k)$-free $r$-graph $G$, any $uv\in {V(G)\choose 2}$ and any edge-disjoint subgraphs $F_1,\dots,F_s\subseteq G$, the  sum-set $\sum_{i=1}^s \CI_{F_i}(uv)=\left\{\sum_{i=1}^s m_i\mid m_i\in \CI_{F_i}(uv)\right\}$ does not contain~$k$.
\end{lemma}

\hide{
For $r,k\geq 3$, let $g^{(r)}(n;rk-2k+2,k)$ denote the maximum number of edges in an $n$-vertex $r$-graph $G$ such that $G$ is $(rk-2k+2,k)$-free and also $(r\ell-2\ell +1,\ell)$-free for all $\ell\in \{2,\ldots,k-1\}$. 

\begin{lemma}{\rm{(}\cite[Lemma 5]{sh}\rm{)}}.\label{chong}
    For all fixed $r\geq 3$ and $k\geq 3$,
    $$\limsup_{n\rightarrow \infty}\frac{f^{(r)}(n;rk-2k+2,k)}{n^2}\leq \limsup_{n\rightarrow \infty}\frac{g^{(r)}(n;rk-2k+2,k)}{n^2}.$$
\end{lemma}
}

As we mentioned in the introduction, our proof strategy to bound the size of an $(rk-2k+2,k)$-free $r$-graph $G$ from above is to analyse possible isomorphism types of the parts of some partition of $E(G)$ which is obtained from the trivial partition into single edges by iteratively applying some merging rules. 
We build the final partition in stages (with each stage having a different merging rule) as the intermediate families are also needed in our analysis. 
Let us now develop some general notation and prove some basic results related to merging. 

Let $G$ be an arbitrary $r$-graph. When dealing with a partition $\C P$ of $E(G)$, we will view each element $F\in\C P$ as an $r$-graph whose vertex set is the union of the edges in~$F$.
Let $A,B\subseteq \I N$ be any (not necessarily disjoint) sets of positive integers. For two subgraphs $F,H\subseteq G$, if they are edge disjoint and there is a pair $uv$ such that $A\subseteq \CI_F(uv)$ and $B\subseteq \CI_H(uv)$, then we say that $F$ and $H$ are \emph{$\OMerge{A}{B}$-mergeable (via $uv$)}. Note that this relation is not symmetric in $F$ and $H$: the first (resp.\ second) $r$-graph $A$-claims (resp.\ $B$-claims) the pair~$uv$.
When the ordering of the two $r$-graphs does not matter, we use the shorthand \emph{$\Merge{A}{B}$-mergeable} to mean $\OMerge{A}{B}$-mergeable or $\OMerge{B}{A}$-mergeable.
For a partition $\C P$ of $E(G)$, its \emph{$\Merge{A}{B}$-merging} is the partition $\C M_{\Merge{A}{B}}(\C P)$ of $E(G)$ obtained from $\C P$ by iteratively and as long as possible taking a pair of distinct $\Merge{A}{B}$-mergeable parts in the current partition
and replacing them by their union.
Note that the final partition $\C M_{\Merge{A}{B}}(\C P)$ is a coarsening of $\C P$ and contains no $\Merge{A}{B}$-mergeable pairs of $r$-graphs. 
When $\C P$ is clear from the context, we may refer to the elements of $\C M_{\Merge{A}{B}}(\C P)$ as \emph{$\Merge{A}{B}$-\cluster{}s}. 
Likewise, a subgraph $F$ of $G$ that can appear as a part in some intermediate stage of the $\Merge{A}{B}$-merging process starting with $\C P$ is called a \emph{partial $\Merge{A}{B}$-\cluster{}} and we let $\C M'_{\Merge{A}{B}}(\C P)$ denote the set of all partial $\Merge{A}{B}$-\cluster{}s. In other words, $\C M'_{\Merge{A}{B}}(\C P)$ is the smallest family of $r$-graphs which contains $\C P$ as a subfamily and is closed under taking the union of $\Merge{A}{B}$-mergeable elements. The monotonicity of the merging rule implies that $\C M_{\Merge{A}{B}}(\C P)$ is exactly the set of maximal (by inclusion) elements of $\C M'_{\Merge{A}{B}}(\C P)$ and that the final partition $\C M_{\Merge{A}{B}}(\C P)$ does not depend on the order in which we merge parts. In other words, since the merging rule is monotone, each merge operation only enlarges parts without affecting the mergeability of others, so the process defines a closure under union, whose set of maximal elements is unique regardless of merge order.

In the frequently occurring case when $A=\{1\}$ and $B=\{j\}$, we abbreviate $\OMerge{\{1\}}{\{j\}}$ to $\OMerge{}{j}$ and $\Merge{\{1\}}{\{j\}}$ to $\Merge{}{j}$ in the above nomenclature.
Thus, $\OMerge{}{j}$-mergeable (resp.\ $\Merge{}{j}$-mergeable) means $\OMerge{\{1\}}{\{j\}}$-mergeable (resp.\ $\Merge{\{1\}}{\{j\}}$-mergeable). 

As an example, let us look at the following merging rule that is actually used as the first step in  our proof of the upper bound. Namely, given $G$, let 
$$
 \OneClusters:=\C M_{\Merge{\{1\}}{\{1\}}}(\C P_{\mathrm{trivial}})
$$ 
 be the $\Merge{}{1}$-merging  of the trivial partition $\C P_{\mathrm{trivial}}$ of $G$ into single edges. We call the elements of $\OneClusters$ \emph{\OneCluster{}s}. Here is an alternative description of~$\OneClusters$. 
Call a subgraph $F\subseteq G$ \emph{connected} if for any two edges $X,Y\in F$ there is a sequence of edges $X_1=X,X_2,\ldots,X_m=Y$ in $F$ such that, for every $i\in [m-1]$, we have $|X_i\cap X_{i+1}|\geq 2$. 
 Then, \OneCluster{}s are exactly maximal connected subgraphs of $G$  (and partial \OneCluster{}s are exactly connected subgraphs).

We will also use (often without explicit mention) the following result, which is a generalisation of the well-known fact that we can remove edges from any connected 2-graph one by one, down to any given connected subgraph, while keeping the edge set connected. 
The assumption of this result, roughly speaking, is that the merging process cannot create any new mergeable pairs.

\begin{lemma}[Trimming Lemma~\cite{GKLPS24}*{Lemma~5.2}]\label{lm:Trim} 
Fix an $r$-graph $G$, a partition $\C P$ of $E(G)$ and sets $A,B\subseteq \I N$.
Suppose that, for all $\OMerge{A}{B}$-mergeable (and thus edge-disjoint) $F,H\in\C M'_{\Merge{A}{B}}(\C P)$,
there exist $\OMerge{A}{B}$-mergeable  $F',H'\in\C P$ such that $F'\subseteq F$ and $H'\subseteq H$.

Then, for every partial $\Merge{A}{B}$-\cluster{}s
$F_0\subseteq F$, there is an ordering $F_1,\dots,F_s$ of the elements of $\C P$ that lie inside $F\setminus F_0$ such that, for every $i\in [s]$, $\bigcup_{j=0}^{i-1} F_j$ and $F_i$ are $\Merge{A}{B}$-mergeable (and, in particular, $\bigcup_{j=0}^{i} F_j$ is a partial $\Merge{A}{B}$-\cluster{} for every $i\in [s]$).
\end{lemma}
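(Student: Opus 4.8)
The plan is to reduce the statement to the connectivity of a single auxiliary graph on the parts of $\C P$. Let $\Gamma$ be the graph on vertex set $\C P$ in which two parts are adjacent exactly when they are $\Merge{A}{B}$-mergeable, and for an $r$-graph $F$ write $\C Q(F):=\{X\in\C P\mid X\subseteq F\}$. First I would record a trivial monotonicity observation: if $X,Y$ are $\Merge{A}{B}$-mergeable and $C$ is an $r$-graph with $Y\subseteq C$ that is edge-disjoint from $X$, then $X,C$ are $\Merge{A}{B}$-mergeable, because a witnessing pair $uv$ for $X,Y$ still works for $X,C$ since $\CI_Y(uv)\subseteq\CI_C(uv)$. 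Second, since $\C P$ partitions $E(G)$, a part $X\in\C P$ satisfies $E(X)\cap E(F)\in\{\varnothing,E(X)\}$ for every $F$ that is a union of parts; a one-line induction along the merging process then gives $F=\bigcup\C Q(F)$ for every $F\in\C M'_{\Merge{A}{B}}(\C P)$, and moreover $\C Q(F)=\C Q(F')\cup\C Q(H')$ (a disjoint union) whenever $F=F'\cup H'$ with $F',H'\in\C M'$ edge-disjoint.

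The heart of the argument is the claim that, under the hypothesis of the lemma, $\Gamma[\C Q(F)]$ is connected for every $F\in\C M'_{\Merge{A}{B}}(\C P)$. I would prove this by structural induction on the way $F$ is produced from $\C P$. If $F\in\C P$ then $\C Q(F)=\{F\}$ is a single vertex and there is nothing to prove. For the inductive step, write $F=F'\cup H'$ with $F',H'\in\C M'$ $\Merge{A}{B}$-mergeable, ordered so that they are $\OMerge{A}{B}$-mergeable; the hypothesis then produces $\OMerge{A}{B}$-mergeable parts $X\subseteq F'$ and $Y\subseteq H'$ in $\C P$, so $X\in\C Q(F')$, $Y\in\C Q(H')$, and $XY$ is an edge of $\Gamma$. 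Since $\Gamma[\C Q(F')]$ and $\Gamma[\C Q(H')]$ are connected by induction and $\C Q(F)=\C Q(F')\cup\C Q(H')$, the edge $XY$ glues the two pieces, so $\Gamma[\C Q(F)]$ is connected. This is precisely where the hypothesis ``merging creates no new mergeable pairs'' is used, and I expect it to be the only genuine obstacle: everything else is bookkeeping with the partition, whereas here one must be careful that the induction is run over the (non-unique) build history of $F$ rather than over $F$ itself.

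Finally I would deduce the lemma by a greedy growing argument. Given partial $\Merge{A}{B}$-\cluster{}s $F_0\subseteq F$, set $\C Q_0:=\C Q(F_0)\subseteq\C Q:=\C Q(F)$; if $F_0=F$ the empty ordering works, so assume $\C Q_0\subsetneq\C Q$ (both nonempty, since parts of a partition are nonempty). As $\Gamma[\C Q]$ is connected, I can enumerate the parts of $\C Q\setminus\C Q_0$ as $F_1,\dots,F_s$ so that each $F_i$ is $\Gamma$-adjacent to some member of $\C Q_0\cup\{F_1,\dots,F_{i-1}\}$. Because $F_i$ does not lie in $\C Q_0\cup\{F_1,\dots,F_{i-1}\}$ and $\C P$ is a partition, $F_i$ is edge-disjoint from $\bigcup\bigl(\C Q_0\cup\{F_1,\dots,F_{i-1}\}\bigr)=\bigcup_{j=0}^{i-1}F_j$, so the monotonicity observation upgrades the adjacency to $\Merge{A}{B}$-mergeability of $F_i$ with $\bigcup_{j=0}^{i-1}F_j$; closure under merging then shows each $\bigcup_{j=0}^{i}F_j$ is a partial $\Merge{A}{B}$-\cluster{}. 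Since, by the partition property, $\C Q\setminus\C Q_0$ is exactly the set of parts of $\C P$ lying inside $F\setminus F_0$, the sequence $F_1,\dots,F_s$ is the required ordering.
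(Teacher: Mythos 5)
The paper cites this lemma from~\cite{GKLPS24}*{Lemma 5.2} and does not reprove it, so there is no in-paper proof to compare your argument against; I will therefore just assess it. Your argument is correct. The monotonicity observation is right: a witnessing pair $uv$ for $X,Y$ remains a witnessing pair for $X,C$ once $Y\subseteq C$ since $\CI_Y(uv)\subseteq\CI_C(uv)$, and this works for both orderings in the symmetric $\Merge{A}{B}$-mergeability. The structural induction showing that $\Gamma[\C Q(F)]$ is connected for every $F\in\C M'_{\Merge{A}{B}}(\C P)$ correctly uses the hypothesis to produce a bridging edge of $\Gamma$ between $\C Q(F')$ and $\C Q(H')$ whenever $F=F'\cup H'$ with $F',H'\in\C M'$ $\Merge{A}{B}$-mergeable; the induction is well-founded, most cleanly run on $|\C Q(F)|$, since $F'$ and $H'$ are edge-disjoint nonempty members of $\C M'$ and hence each has strictly fewer parts. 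The bookkeeping identities $F=\bigcup\C Q(F)$ and $\C Q(F)=\C Q(F')\sqcup\C Q(H')$ follow exactly as you say from $\C P$ being a partition, as does the identification of $\C Q(F)\setminus\C Q(F_0)$ with the parts of $\C P$ inside $F\setminus F_0$. The final greedy enumeration from a connected auxiliary graph is standard, and the monotonicity observation correctly upgrades the $\Gamma$-adjacency of $F_i$ to a witnessing member of $\C Q_0\cup\{F_1,\dots,F_{i-1}\}$ into $\Merge{A}{B}$-mergeability of $F_i$ with $\bigcup_{j=0}^{i-1}F_j$, which in turn shows each $\bigcup_{j=0}^{i}F_j$ is a partial $\Merge{A}{B}$-\cluster{}. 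This reduction to connectivity of an auxiliary graph on $\C P$ is a clean and natural way to package the trimming argument.
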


In the special case $A=B=\{1\}$ (when partial \cluster{}s are just connected subgraphs), the assumption of Lemma~\ref{lm:Trim} is vacuously true. Since we are going to use its conclusion quite often,
we state it separately.

\begin{cor}\label{cr:Trim} For every pair $F_0\subseteq F$ of connected $r$-graphs, there is an ordering $X_1,\dots,X_s$ of the edges in $F\setminus F_0$ such that, for every $i\in [s]$, the $r$-graph $F_0\cup \{X_1,\dots,X_i\}$ is connected.\end{cor}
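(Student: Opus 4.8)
The final statement to establish is Corollary~\ref{cr:Trim}: for connected $r$-graphs $F_0\subseteq F$, the edges of $F\setminus F_0$ can be ordered $X_1,\dots,X_s$ so that $F_0\cup\{X_1,\dots,X_i\}$ stays connected at every step. As the excerpt notes, this is the special case $A=B=\{1\}$ of the Trimming Lemma (Lemma~\ref{lm:Trim}), whose hypothesis — that merging creates no new mergeable pairs — is vacuous here, since two edge-disjoint connected subgraphs sharing a pair of vertices are $(1\mid 1)$-mergeable witnessed already by any single edge of each containing that pair. So in principle one line ("apply Lemma~\ref{lm:Trim} with $A=B=\{1\}$") suffices; but since the result is used repeatedly and deserves a self-contained argument, I would give a short direct proof.

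\textbf{Direct argument.} The plan is to build the ordering greedily from the other end: start from $F$ and peel edges off down to $F_0$, maintaining connectivity throughout. Concretely, I claim that as long as the current connected subgraph $F'$ with $F_0\subsetneq F'\subseteq F$ is strictly larger than $F_0$, there exists an edge $X\in F'\setminus F_0$ such that $F'\setminus\{X\}$ is still connected. Granting the claim, we remove such edges one at a time, obtaining a chain $F=F'_0\supsetneq F'_1\supsetneq\dots\supsetneq F'_s=F_0$ of connected subgraphs with $|F'_{i-1}\setminus F'_i|=1$; reading off the removed edges in reverse order as $X_s,X_{s-1},\dots,X_1$ gives exactly the required ordering, since $F_0\cup\{X_1,\dots,X_i\}=F'_{s-i}$ is connected for every $i$.

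\textbf{Proving the claim.} This is the only substantive point, and it is the ``$r$-graph analogue of a spanning-tree peel''. First I would reduce to the case $F_0=\varnothing$ by noting that, since $F_0$ is connected, connectivity of any $F'\supseteq F_0$ is equivalent to connectivity after contracting all edges of $F_0$ — more cleanly, I would simply argue as follows. Consider the auxiliary $2$-graph (multigraph) whose vertices are the edges of $F'$ and whose adjacencies are pairs $X,Y\in F'$ with $|X\cap Y|\ge 2$; by definition $F'$ is connected iff this auxiliary graph is connected. Take any spanning tree $T$ of this auxiliary graph; since $F'\setminus F_0\ne\varnothing$ and $F_0$ induces a connected subgraph there, $T$ restricted to the vertex set $F_0$ need not be a subtree, but $T$ has at least one leaf; if every leaf of $T$ lay in $F_0$ then, removing all of $F_0$, there would be a problem — instead I would pick a leaf of $T$ that is not a cut vertex of the auxiliary graph and, if it lies in $F_0$, re-root. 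The cleanest route: since $F_0$ induces a connected subgraph of the auxiliary graph, contract $F_0$ to a single vertex $v_0$; the resulting multigraph is connected, has at least two vertices, hence has a spanning tree with a leaf $X\ne v_0$, i.e.\ $X\in F'\setminus F_0$; removing $X$ from the auxiliary graph keeps it connected (a leaf of a spanning tree is never a cut vertex), which is precisely the statement that $F'\setminus\{X\}$ is connected. That settles the claim and hence the corollary.

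\textbf{Main obstacle.} There is no real obstacle: the entire content is the spanning-tree/leaf argument for the auxiliary intersection graph, and the only care needed is to ensure the leaf we delete does not belong to $F_0$ — handled by the contraction trick above. I would therefore keep the written proof to a few lines, either citing Lemma~\ref{lm:Trim} directly or recording the short auxiliary-graph argument, whichever the surrounding exposition prefers.
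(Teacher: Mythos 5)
Your proposal is correct. The paper's own treatment is exactly the one-line route you mention first: it observes that Corollary~\ref{cr:Trim} is the special case $A=B=\{1\}$ of Lemma~\ref{lm:Trim}, whose hypothesis (``merging creates no new mergeable pairs'') is automatic for the rule ``share at least two vertices,'' and stops there. What you add is a genuinely different, self-contained argument that does not invoke the Trimming Lemma at all: pass to the auxiliary $2$-graph on vertex set $E(F')$ with adjacency $|X\cap Y|\ge 2$ (so connectivity of $F'$ is connectivity of this graph), contract the connected vertex subset $E(F_0)$ to a single vertex $v_0$, take a spanning tree of the resulting connected multigraph, and delete a leaf other than $v_0$ --- such a leaf exists because a tree on $\ge 2$ vertices has $\ge 2$ leaves, and deleting a leaf of a spanning tree never disconnects the graph. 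Iterating from $F$ down to $F_0$ and reversing the removals yields the required ordering. Both proofs are sound; yours is more elementary and arguably more illuminating for this special case, while the paper's is shorter because Lemma~\ref{lm:Trim} has already been established for general $(A\mid B)$. The only housekeeping worth making explicit in your version is the degenerate boundary: the peeling loop stops exactly when $F'=F_0$ (so $F'\setminus F_0\ne\varnothing$ in every application of the claim), and if $F_0=\varnothing$ and $F'$ has a single edge one simply removes it directly --- both harmless.
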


We say that an $r$-graph is a \emph{$1$-tree} if it contains only one edge. 
For $i\geq 2$, we recursively define an \emph{$i$-tree} as any $r$-graph that can be obtained from an $(i-1)$-tree $T$ by adding a new edge that consists of a pair $ab$ in the $2$-shadow $\p{1} T$ of $T$ and $r-2$ new vertices (not present in~$T$). Clearly, every $i$-tree is connected. Like the usual 2-graph trees, $i$-trees are the ``sparsest" connected $r$-graphs of given size. Any $i$-tree $T$ satisfies
\begin{equation}\label{eq:tree}
 |\p{1} T|=i{r\choose 2}-i+1\quad\mbox{and}\quad |\pp{1}{2} T|\geq (i-1) (r-2)^2.
\end{equation}
(Recall that $\pp{1}{2} T$ is the set of pairs which are $2$-claimed but not \OneClaim{}ed by~$T$.) Note that the second inequality in~\eqref{eq:tree} is equality if, for example, $T$ is an \emph{$i$-path}, meaning that we can order the edges of  the $i$-tree $T$ as $X_1,\dots,X_i$ so that, for each $j\in [i-1]$, the intersection $X_{j+1}\cap (\bigcup_{s=1}^j X_s)$ consists of exactly one pair of vertices and this pair belongs to $\p{1}{X_j}\setminus \p{1}{\bigcup_{s=1}^{j-1} X_s}$.

The following result shows that the \OneCluster{}s of any $\cG{r}{k}$-free graph have a very simple structure: namely, they are all small trees.

\begin{lemma}[\cite{GKLPS24}*{Lemma~5.4}]\label{lm:Trees} 
With the above notation, if $G$ is $\cG{r}{k}$-free, then every $F\in\OneClusters$ is an $m$-tree for some $m\in [k-1]$.
\end{lemma}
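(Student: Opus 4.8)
The plan is to prove, by induction on $m$, the following slightly reorganised statement: \emph{every non-empty connected subgraph $F\subseteq G$ with $|F|=m$ edges satisfies $m\le k-1$ and is an $m$-tree}. Since $\OneClusters$ is a coarsening of the trivial partition of $E(G)$, each of its members is a non-empty connected subgraph of $G$, so this is exactly what is needed. Throughout I will use the standard fact that a connected $r$-graph on $\ell$ edges has at most $r\ell-2\ell+2$ vertices: indeed, by Corollary~\ref{cr:Trim} applied with $F_0$ a single edge of $F$, we may order the edges of $F$ as $X_1,\dots,X_m$ so that each $F_\ell:=\{X_1,\dots,X_\ell\}$ is connected; for $\ell\ge 2$ the connected $r$-graph $F_\ell$ has at least two edges, so $X_\ell$ shares at least two vertices with some edge of $F_{\ell-1}$, giving $v(F_\ell)\le v(F_{\ell-1})+(r-2)$, and together with $v(F_1)=r$ this yields $v(F_\ell)\le r\ell-2\ell+2$ for all $\ell\in[m]$.

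First I would settle the size bound. If $m\ge k$, then $F_k=\{X_1,\dots,X_k\}$ is a connected subgraph with $k$ edges on at most $rk-2k+2$ vertices, i.e.\ a $k$-configuration, contradicting $\cG{r}{k}$-freeness. Hence $m\le k-1$. The base case $m=1$ of the induction is trivial, a single edge being a $1$-tree. For the inductive step assume $m\ge 2$ and keep the ordering $X_1,\dots,X_m$ from above. Then $F_{m-1}=F\setminus\{X_m\}$ is connected with $m-1$ edges, so by the induction hypothesis it is an $(m-1)$-tree, and in particular $v(F_{m-1})=r(m-1)-2(m-1)+2$. Also, as noted, $X_m$ shares at least two vertices with some edge $X_j\in F_{m-1}$; fix a pair $ab\subseteq X_m\cap X_j$, so that $ab\in\p{1}{X_j}\subseteq\p{1}{F_{m-1}}$.

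Finally I would pin down the vertex count. On one hand $v(F)=v(F_{m-1})+|X_m\setminus V(F_{m-1})|\le v(F_{m-1})+(r-2)=rm-2m+2$, since $X_m$ meets $V(F_{m-1})$ in at least the two vertices $a,b$. On the other hand, $F$ has $m$ edges with $2\le m\le k-1$, so it is not an $m^-$-configuration, whence $v(F)\ge rm-2m+2$. Therefore $v(F)=rm-2m+2$, which forces $|X_m\cap V(F_{m-1})|=2$, i.e.\ $X_m\cap V(F_{m-1})=\{a,b\}$. Thus $F$ is obtained from the $(m-1)$-tree $F_{m-1}$ by adding one edge consisting of the pair $ab\in\p{1}{F_{m-1}}$ and $r-2$ new vertices, which is precisely the recursive definition of an $m$-tree, completing the induction. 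The only mildly delicate part is the bookkeeping around the connectivity-preserving ordering (making sure the last edge attaches to a single earlier edge along an honest pair of its $2$-shadow) and combining the connectivity upper bound with the $m^-$-freeness lower bound to force exactly $r-2$ new vertices; once these are in hand, the tree structure falls out of the definitions.
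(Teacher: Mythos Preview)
Your proof is correct. Note that the paper does not actually supply its own proof of this lemma: it is quoted verbatim from \cite{GKLPS24}*{Lemma~5.4} without argument. Your approach---ordering the edges via Corollary~\ref{cr:Trim} so that every prefix is connected, bounding the size by $k$-freeness, and then using $m^-$-freeness to force exactly $r-2$ new vertices at each step---is the natural one and is essentially what the cited reference does.
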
 

\subsubsection{Structural lemma for $\C M_2$}

Given a $\cG{r}{8}$-free $r$-graph $G$, we consider another edge partition
\[
\C M_2:= \C M_{\Merge{\{1\}}{\{2\}}}(\C M_1),
\]
which is obtained from $\C M_1$ by iteratively merging $(2)$-mergeable pairs. Let $\C M'_2:=\C M'_{\Merge{\{1\}}{\{2\}}}(\C M_1)$ be the set of all partial $\Merge{\{1\}}{\{2\}}$-clusters. For simplicity, we call the elements of $\C M_2$ (resp.~$\C M'_2$) {\emph{2-clusters}} (resp.~{\emph{partial 2-clusters}}).

In Lemma~\ref{lem:str} below, we provide some combinatorial properties of 2-clusters, which will allow us to assign appropriate weights and then achieve correct upper bound in the next section. Before moving to the structural lemma, we first introduce some definitions. 

For $F\in \C M'_2$ which is made by merging 1-clusters $F_1,\dots ,F_m$ in this order as in Lemma~\ref{lm:Trim}, we call the sequence $(|F_1|,\dots ,|F_m|)$ of sizes a \emph{composition} of~$F$.  Its non-increasing reordering is called \emph{the (non-increasing) composition} of~$F$. 
Given an $r$-graph $F$, we say that an edge $e\in E(F)$ is {\emph {flexible}} in $F$ if there are $r-2$ vertices $\{v_1,\dots ,v_{r-2}\}\subseteq e$ such that, for each $i\in [r-2]$, $v_i$ does not belong to any other edge of~$F$. We also call the set of these $r-2$ vertices $\{v_1,\dots ,v_{r-2}\}$ a {\emph{flexible set}}. This means that if we delete a flexible set from $V(F)$, then $F$ will lose only one edge. In the following, we will refer to ``removing/deleting'' a flexible edge $e$ from $F$ as removing edge $e$ and all $r-2$ vertices in its flexible set. 
We denote by $Q(F)\subseteq E(F)$ the set of flexible edges of $F$, and let $q(F):=|Q(F)|$.

\begin{lemma}\label{lm:flex-edge}
    For every $m$-tree $F$ with $m\ge 2$, we have $q(F)\ge 2$.
\end{lemma}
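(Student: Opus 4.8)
The plan is to induct on $m$, the number of edges of the $m$-tree $F$. For the base case $m=2$, write $E(F)=\{X_1,X_2\}$ where $X_2$ is obtained by adding $r-2$ new vertices to a pair $ab\in\p{1}{X_1}$; then the $r-2$ new vertices form a flexible set for $X_2$, while $X_1$ has at least $r-2$ vertices not in $\{a,b\}$, none of which lie in $X_2$, so $X_1$ is flexible too. Hence $q(F)\ge 2$.

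For the inductive step, take an $m$-tree $F$ with $m\ge 3$ and let $X$ be the last edge added in a construction sequence, so $F':=F\setminus\{X\}$ is an $(m-1)$-tree and $X$ consists of a pair $ab\in\p{1}{F'}$ together with a set $W$ of $r-2$ vertices not present in $F'$. By the induction hypothesis $F'$ has at least two flexible edges, say $e,e'\in Q(F')$ with flexible sets $S_e,S_{e'}$. The key observation is that $X$ is automatically flexible in $F$ (with flexible set $W$), so it suffices to show that at least one of $e,e'$ remains flexible in $F$. Since $e\ne e'$, their flexible sets $S_e,S_{e'}$ are disjoint from each other (both are subsets of $e$ and $e'$ respectively, and $e\cap e'$ is a pair that cannot be contained in either flexible set once $m-1\ge 2$ — more carefully, a vertex in $S_e$ lies in no edge of $F'$ other than $e$, so in particular not in $e'$). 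The only way adding $X$ can destroy the flexibility of $e$ is if the pair $\{a,b\}$ meets $S_e$; similarly for $e'$. But $\{a,b\}$ has only two elements, and a vertex of $S_e$ lying in $\{a,b\}$ is distinct from any vertex of $S_{e'}$ lying in $\{a,b\}$. The mild subtlety is that $\{a,b\}$ could intersect $S_e$ in one vertex and $S_{e'}$ in one vertex simultaneously, so a crude count does not immediately finish; instead I would argue as follows. If $|S_e|=r-2\ge 2$, then $\{a,b\}$ cannot contain all of $S_e$, so after deleting the (at most one) vertex of $S_e\cap\{a,b\}$ from $S_e$, the remaining set still certifies that $e$ is flexible in $F$ provided $r-2\ge 2$, i.e. $r\ge 4$.

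The case $r=3$ needs separate care, since then flexible sets are singletons and the pair $\{a,b\}$ could kill both. Here I would use that $F'$ is an $(m-1)$-tree with $m-1\ge 2$ and argue that it has enough flexible edges, or rather enough flexible \emph{vertices}, to survive: an $(m-1)$-tree on $m-1\ge 2$ edges in fact has at least $m-1\ge 2$ leaves in the natural tree structure, and one can choose $e,e'$ whose flexible vertices avoid any prescribed pair $\{a,b\}$ unless $F'$ is very small; a short case analysis handles $m-1=2$ directly. I would also double-check the edge case where $e$ or $e'$ equals an edge containing $a$ or $b$ but whose flexible set avoids $\{a,b\}$ — then nothing is destroyed.

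\textbf{Main obstacle.} The genuine difficulty is the bookkeeping when $r=3$: flexible sets are single vertices, and adding the new edge on pair $\{a,b\}$ can simultaneously absorb the flexible vertex of $e$ and of $e'$. The resolution should be to track that an $m$-tree (for $r=3$) actually has \emph{many} flexible edges — more than two for $m\ge 3$ — or to exploit that in the tree-construction order the two most recently added edges are always flexible and their flexible vertices were ``new'' at the time of addition, hence cannot both coincide with a pair used to attach a still-later edge. Formalising ``the last two edges stay flexible'' cleanly, and checking it is robust to the choice of construction order, is where I expect to spend the most effort.
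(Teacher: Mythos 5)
Your base case and the observation that the new edge $X$ is automatically flexible are both correct, and so is the reduction of the inductive step to showing that at least one of $e,e'$ stays flexible after $X$ is added. But the key step of your argument for $r\ge 4$ is wrong, and the $r=3$ case is left unfinished.

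The problem with the $r\ge 4$ step: you propose that if $\{a,b\}$ meets $S_e$ in one vertex, then ``after deleting the (at most one) vertex of $S_e\cap\{a,b\}$ from $S_e$, the remaining set still certifies that $e$ is flexible in $F$.'' This is not true under the paper's definition of flexible, which requires a witnessing set of exactly $r-2$ vertices of $e$, each lying in no other edge. Removing one vertex from $S_e$ leaves only $r-3$ vertices, which is no certificate at all; and you have no control over whether the remaining two vertices of $e$ (outside $S_e$) lie in other edges, so you cannot simply swap in a replacement vertex. Thus your argument does not show that either $e$ or $e'$ survives.

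What actually closes the gap — and what the paper does — is to show that the ``bad'' configuration (where $\{a,b\}$ meets both $S_e$ and $S_{e'}$) is outright impossible, for every $r\ge 3$. Suppose $a\in S_e$ and $b\in S_{e'}$ with $e\ne e'$. Since $ab\in\p{1}{F'}$, some edge $e_3$ of $F'$ contains both $a$ and $b$. But $a\in S_e$ forces $e_3=e$ (vertices of a flexible set lie in no other edge of $F'$), and then $b\in e$ contradicts $b\in S_{e'}$ with $e\ne e'$. Hence $\{a,b\}$ avoids at least one of $S_e,S_{e'}$, that edge stays flexible in $F$, and together with $X$ this gives $q(F)\ge 2$ — with no split by $r$ and no case analysis on $m$. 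This single observation is what your proposal is missing; with it, the ``hard case $r=3$'' you flagged simply disappears.
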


\begin{proof}
    We use induction on $m$. For $m=2$, a $2$-tree is a pair of edges sharing exactly two vertices. Thus, both edges are flexible. Now consider an $m$-tree~$F$ with $m\ge 3$. Recall that $F$ can be obtained from an $(m-1)$-tree, say $F'$, by adding a new edge $e$ which consists of a pair $ab$ in the $P_1(F')$ and $r-2$ new vertices. Therefore, $e\in Q(F)$. On the other hand, by induction hypothesis, $F'$ contains two flexible edges $e_1,e_2$. Let $S_1\subseteq e_1,$ and $S_2\subseteq e_2$ be the flexible sets of $e_1$ and $e_2$ respectively. Then $S_1\cap e_2=S_2\cap e_1=\varnothing$, by the definition of flexible set. We claim that the new added edge $e$ cannot intersect both $S_1$ and $S_2$. If it intersects both, then one of the two intersection vertices $a,b$ lies in $S_1$ and the other in $S_2$. Since $ab\in P_1(F')$, the pair $ab$ is covered by some edge $e_3\in E(F')\setminus \{e_1,e_2\}$, which contradicts that $S_1,S_2$ are flexible sets of $F'$. Hence, without loss of generality, we can assume that $e\cap S_1=\varnothing$. As a consequence, $e_1$ is also a flexible edge of $F$ and $q(F)\ge 2$.    
\end{proof}

One big challenge in proving tight upper bounds for $k=8$ (also present  for smaller $k$) is that there may be \emph{exceptional} 2-clusters, that is, $2$-clusters that contain more than $k$ edges (while being $\C G_k^{(r)}$-free). We will now describe some families of 2-clusters that, as claimed by the first part of Lemma~\ref{lem:str},  include every exceptional one. Recall that if we do $\OMerge{A}{B}$-merging then $A$ refers to the $r$-graph mentioned first (and $B$ to the $r$-graph mentioned later).

Let $\mathcal{A}$ be the family of $9$-edge 2-clusters with a composition $(5,2,2)$ which can be obtained as follows. Take any $5$-tree $S_1$ in $\C M_1$ with exactly two flexible $(r-2)$-sets $A_1$ and $A_2$. (In fact, every such tree  $S_1$ can be shown to be a path but we will not need this.)
Then $\OMerge{1}{2}$-merge $S_1$ with diamonds $D_1, D_2\in \C M_1$ so that the pair $x_iy_i$ in $\p{1}{S_1}$ $2$-claimed by the diamond $D_i$ intersects $A_i$ for $i=1,2$. 


Let $\mathcal{B}$ be the family of $9$-edge 2-clusters $B$ with a composition $(1,2,4,2)$ that can be obtained as follows. We start with a single $1$-tree $S_1$, that is, with a single edge. Then we $\OMerge{1}{2}$-merge $S_1$ with a $2$-tree (diamond) $S_2$ and with a $4$-tree $S_3$ via any two distinct pairs in $\p{1}{S_1}$. Then we $\OMerge{1}{2}$-merge $S_3$ with a diamond $S_4$. 

Let us remark that, since a $9$-edge 2-cluster in a $\mathcal{G}_8^{(r)}$-free $r$-graph cannot have a flexible edge, we can describe the structure of $B\in\mathcal{B}$ more precisely. For example, it must hold that $q(S_3)=2$ and each of the two flexible $(r-2)$-sets of $S_3$ is made inflexible as the result of the two mergings involving $S_3$. However, here (and later) we prefer to give simple descriptions that, even if less precise, are nonetheless enough for our proof.


Let $\mathcal{C}_1$ be the family of $9$-edge 2-clusters with a composition $(3,2,2,2)$, which can be obtained from a $3$-tree $S$ by 3 times iteratively $\Merge{1}{2}$-merging the current $r$-graph and a new diamond.

Let $\mathcal{C}_2$ be the family of $11$-edge 2-clusters with a composition $(3,2,2,2,2)$ which can be obtained by taking a one element $F$ of $\mathcal{C}_1$ and $\OMerge{1}{2}$-merging it with a new diamond. 

Let $\mathcal{E}$ be the set of $9$-edge 2-clusters with a composition $(3,1,1,2,2)$ that can be obtained as follows. We start with a $1$-tree $S_1$ and $\OMerge{1}{2}$-merge $S_1$ via distinct pairs of $\p{1}{S_1}$ with a diamond $S_2$ and a $3$-tree $S_3$. Then we $\OMerge{2}{1}$-merge $S_1\cup S_2\cup S_3$  with another $1$-tree $S_4$. A last diamond $S_5$ is $\OMerge{2}{1}$-merged with $S_4$.


Let $\mathcal{F}$ be the set of $9$-edge 2-clusters with a composition $(1,2,2,2,2)$ which can be obtained as follows. We can start with a single $1$-tree $S_1$ and $\OMerge{1}{2}$-merge $S_1$ via distinct pairs of $\p{1}{S_1}$  with two diamonds $D_2, D_3$. Then $\OMerge{2}{1}$-merge $S_1\cup D_2\cup D_3$   with a new diamond $D_4$ so that the common pair is in $\pp{1}{2}{D_2}\cup \pp{1}{2}{D_3}$ and belongs to exactly one edge of $D_4$.
The last diamond $D_5$ is $\OMerge{2}{1}$-merged with $D_4$ via a pair in $\p{1}{e}$, where $e$ is the unique flexible edge of $S_1\cup D_2\cup D_3\cup D_4$ (which belongs to $D_4$).

Let $\mathcal{S}_i$ be the family of $(2i+1)$-edge 2-clusters obtained from a single $1$-tree $S_1$ by $i$ times iteratively $\OMerge{1}{2}$-merging the current $r$-graph with diamonds $D_1,\dots ,D_i$. Thus each element of $\mathcal{S}_i$ has a composition with one entry being $1$ and $i$ entries being $2$.

See Figure~\ref{fi:families} for  some illustrations of the above families.

\begin{figure}
\centering
\renewcommand{\arraystretch}{1.3}
    \begin{scriptsize}
  \begin{tabular}[b]{|>{\centering\arraybackslash}m{5cm}|>{\centering\arraybackslash}m{5cm}|}
\hline
    {
    \begin{tikzpicture}[scale=0.7,line cap=round,line join=round,x=1cm,y=1cm,baseline={(0,-1)}]
\foreach \i [count=\n from 1] in {0,...,4} {
    \filldraw[blue, fill opacity={0.3}, draw=black, line width=0.4pt]
        (\i,0.5) rectangle ++(1,-1);
}
\filldraw[fill=orange, fill opacity=0.3, draw=black, line width=0.5pt]
  (4,0.5) -- (4,1.5) -- (4.5,2) -- (4.5,1) -- cycle;
\filldraw[fill=orange, fill opacity=0.3, draw=black, line width=0.5pt]
(5,0.5) -- (5,1.5) -- (4.5,2) -- (4.5,1) -- cycle;
\filldraw[fill=cyan, fill opacity=0.3, draw=black, line width=0.5pt]
  (0,-0.5) -- (0,-1.5) -- (0.5,-2) -- (0.5,-1) -- cycle;
\filldraw[fill=cyan, fill opacity=0.3, draw=black, line width=0.5pt]
(1,-0.5) -- (1,-1.5) -- (0.5,-2) -- (0.5,-1) -- cycle;
    \draw[color=black] (2.5,-2.3) node {\large{$\C A$}};
    \end{tikzpicture}
    }  
    & 
    {\vspace{45pt}
    \begin{tikzpicture}[scale=0.7,line cap=round,line join=round,x=1cm,y=1cm,baseline={(0,1)}]
    \filldraw[fill=blue, fill opacity=0.3, draw=black, line width=0.5pt]
    (-1,0.5) -- (-1,1.5) -- (0,1.5) -- (0,0.5) -- cycle;
    \filldraw[fill=orange, fill opacity=0.3, draw=black, line width=0.5pt]
    (0,0.5) -- (1,0.5) -- (1.5,1) -- (0.5,1) -- cycle;
    \filldraw[fill=orange, fill opacity=0.3, draw=black, line width=0.5pt]
    (1.5,1) -- (0.5,1) -- (0,1.5) -- (1,1.5) -- cycle;
    \filldraw[fill=orange, fill opacity=0.3, draw=black, line width=0.5pt]
    (1,0.5) -- (2,0.5) -- (2.5,1) -- (1.5,1) -- cycle;
    \filldraw[fill=orange, fill opacity=0.3, draw=black, line width=0.5pt]
    (2,0.5) -- (3,0.5) -- (3.5,1) -- (2.5,1) -- cycle;
    \filldraw[fill=cyan, fill opacity=0.3, draw=black, line width=0.5pt]
    (-1,0.5) -- (-2,0.5) -- (-2.5,1) -- (-1.5,1) -- cycle;
    \filldraw[fill=cyan, fill opacity=0.3, draw=black, line width=0.5pt]
    (-1,1.5) -- (-2,1.5) -- (-2.5,1) -- (-1.5,1) -- cycle;
    \filldraw[fill=red, fill opacity=0.4, draw=black, line width=0.5pt]
    (2.5,1) -- (2.5,2) -- (3,2.5) -- (3,1.5) -- cycle;
    \filldraw[fill=red, fill opacity=0.4, draw=black, line width=0.5pt]
    (3.5,1) -- (3.5,2) -- (3,2.5) -- (3,1.5) -- cycle;
    \draw[color=black] (0.5,-1.3) node {\large{$\C B$}};
    \end{tikzpicture}
    }    
    
\\
\hline
    {
    \begin{tikzpicture}[scale=0.7,line cap=round,line join=round,x=1cm,y=1cm,baseline={(0,-1)}]
    \filldraw[fill=blue, fill opacity=0.3, draw=black, line width=0.5pt]
    (-1,0) -- (-1,1) -- (0,1) -- (0,0) -- cycle;
    \filldraw[fill=blue, fill opacity=0.3, draw=black, line width=0.5pt]
    (1,0) -- (1,1) -- (0,1) -- (0,0) -- cycle;
    \filldraw[fill=blue, fill opacity=0.3, draw=black, line width=0.5pt]
    (1,0) -- (1,1) -- (2,1) -- (2,0) -- cycle;
    \filldraw[fill=orange, fill opacity=0.3, draw=black, line width=0.5pt]
    (2.5,0.5) -- (3.5,0.5) -- (3,0) -- (2,0) -- cycle;
    \filldraw[fill=orange, fill opacity=0.3, draw=black, line width=0.5pt]
    (2.5,0.5) -- (2,1) -- (3,1) -- (3.5,0.5) -- cycle;
    \filldraw[fill=cyan, fill opacity=0.3, draw=black, line width=0.5pt]
    (-1,0) -- (-2,0) -- (-2.5,0.5) -- (-1.5,0.5) -- cycle;
    \filldraw[fill=cyan, fill opacity=0.3, draw=black, line width=0.5pt]
    (-2.5,0.5) -- (-1.5,0.5) -- (-1,1) -- (-2,1) -- cycle;
   \filldraw[fill=red, fill opacity=0.4, draw=black, line width=0.5pt]
    (1,1) -- (1,2) -- (1.5,2.5) -- (1.5,1.5) -- cycle;
    \filldraw[fill=red, fill opacity=0.4, draw=black, line width=0.5pt]
    (1.5,1.5) -- (1.5,2.5) -- (2,2) -- (2,1) -- cycle;
   \draw[color=black] (0.5,-2) node {\large{$\C C_1$}};
    \end{tikzpicture}
    }   
    & 
   {\vspace{25pt}
   \begin{tikzpicture}[scale=0.7,line cap=round,line join=round,x=1cm,y=1cm,baseline={(0,0)}]
    \filldraw[fill=blue, fill opacity=0.3, draw=black, line width=0.5pt]
    (-1,0) -- (-1,1) -- (0,1) -- (0,0) -- cycle;
    \filldraw[fill=blue, fill opacity=0.3, draw=black, line width=0.5pt]
    (1,0) -- (1,1) -- (0,1) -- (0,0) -- cycle;
    \filldraw[fill=blue, fill opacity=0.3, draw=black, line width=0.5pt]
    (1,0) -- (1,1) -- (2,1) -- (2,0) -- cycle;
    \filldraw[fill=orange, fill opacity=0.3, draw=black, line width=0.5pt]
    (2.5,0.5) -- (3.5,0.5) -- (3,0) -- (2,0) -- cycle;
    \filldraw[fill=orange, fill opacity=0.3, draw=black, line width=0.5pt]
    (2.5,0.5) -- (2,1) -- (3,1) -- (3.5,0.5) -- cycle;
    \filldraw[fill=cyan, fill opacity=0.3, draw=black, line width=0.5pt]
    (-1,0) -- (-2,0) -- (-2.5,0.5) -- (-1.5,0.5) -- cycle;
    \filldraw[fill=cyan, fill opacity=0.3, draw=black, line width=0.5pt]
    (-2.5,0.5) -- (-1.5,0.5) -- (-1,1) -- (-2,1) -- cycle;
   \filldraw[fill=red, fill opacity=0.4, draw=black, line width=0.5pt]
    (1,1) -- (1,2) -- (1.5,2.5) -- (1.5,1.5) -- cycle;
    \filldraw[fill=red, fill opacity=0.4, draw=black, line width=0.5pt]
    (1.5,1.5) -- (1.5,2.5) -- (2,2) -- (2,1) -- cycle;
    \filldraw[fill=violet, fill opacity=0.4, draw=black, line width=0.5pt]
    (-1,0) -- (-1,-1) -- (-0.5,-1.5) -- (-0.5,-0.5) -- cycle;
    \filldraw[fill=violet, fill opacity=0.4, draw=black, line width=0.5pt]
    (-0.5,-1.5) -- (-0.5,-0.5) -- (0,0) -- (0,-1) -- cycle;
   \draw[color=black] (0.5,-2.5) node {\large{$\C C_2$}};
    \end{tikzpicture}
    } 

\\
\hline
    {\vspace{15pt}
    \begin{tikzpicture}[scale=0.7,line cap=round,line join=round,x=1cm,y=1cm,baseline={(0,-1)}]
    \filldraw[fill=blue, fill opacity=0.3, draw=black, line width=0.5pt]
    (-1,0) -- (-1,1) -- (0,1) -- (0,0) -- cycle;
    \filldraw[fill=cyan, fill opacity=0.3, draw=black, line width=0.5pt]
    (-1,0) -- (-2,0) -- (-2.5,0.5) -- (-1.5,0.5) -- cycle;
    \filldraw[fill=cyan, fill opacity=0.3, draw=black, line width=0.5pt]
    (-2.5,0.5) -- (-1.5,0.5) -- (-1,1) -- (-2,1) -- cycle;
    \filldraw[fill=orange, fill opacity=0.3, draw=black, line width=0.5pt]
    (0,0) -- (1,0) -- (1.5,0.5) -- (0.5,0.5) -- cycle;
    \filldraw[fill=orange, fill opacity=0.3, draw=black, line width=0.5pt]
    (1.5,0.5) -- (0.5,0.5) -- (0,1) -- (1,1) -- cycle;
    \filldraw[fill=orange, fill opacity=0.3, draw=black, line width=0.5pt]
    (0,0) -- (1,0) -- (1.5,-0.5) -- (0.5,-0.5) -- cycle;
    \filldraw[fill=red, fill opacity=0.4, draw=black, line width=0.5pt]
    (1.5,-0.5) -- (1.5,0.5) -- (2.5,0.5) -- (2.5,-0.5) -- cycle;
    \filldraw[fill=violet, fill opacity=0.4, draw=black, line width=0.5pt]
    (2.5,-0.5) -- (3.5,-0.5) -- (4,0) -- (3,0) -- cycle;
    \filldraw[fill=violet, fill opacity=0.4, draw=black, line width=0.5pt]
    (4,0) -- (3,0) -- (2.5,0.5) -- (3.5,0.5) -- cycle;
   \draw[color=black] (0.6,-2.5) node {\large{$\C E$}};
    \end{tikzpicture}
    }   
&
   {\vspace{33pt}
   \begin{tikzpicture}[scale=0.7,line cap=round,line join=round,x=1cm,y=1cm,baseline={(0,-1)}]
    \filldraw[fill=blue, fill opacity=0.3, draw=black, line width=0.5pt]
    (-1,0) -- (-1,1) -- (0,1) -- (0,0) -- cycle;
    \filldraw[fill=cyan, fill opacity=0.3, draw=black, line width=0.5pt]
    (-1,0) -- (-2,0) -- (-2.5,0.5) -- (-1.5,0.5) -- cycle;
    \filldraw[fill=cyan, fill opacity=0.3, draw=black, line width=0.5pt]
    (-2.5,0.5) -- (-1.5,0.5) -- (-1,1) -- (-2,1) -- cycle;
    \filldraw[fill=orange, fill opacity=0.3, draw=black, line width=0.5pt]
    (0,0) -- (1.5,0) -- (1,0.5) -- (0.5,0.5) -- cycle;
    \filldraw[fill=orange, fill opacity=0.3, draw=black, line width=0.5pt]
    (1,0.5) -- (0.5,0.5) -- (0,1) -- (1.5,1) -- cycle;
    \filldraw[fill=red, fill opacity=0.4, draw=black, line width=0.5pt]
    (1.5,0) -- (1.5,1) -- (2.5,1) -- (2.5,0) -- cycle;
    \filldraw[fill=red, fill opacity=0.4, draw=black, line width=0.5pt]
    (2.5,0) -- (2.5,1) -- (3.5,1) -- (3.5,0) -- cycle;
    \filldraw[fill=violet, fill opacity=0.4, draw=black, line width=0.5pt]
    (2.5,1) -- (2.5,2) -- (3,2.5) -- (3,1.5) -- cycle;
    \filldraw[fill=violet, fill opacity=0.4, draw=black, line width=0.5pt]
    (3,2.5) -- (3,1.5) -- (3.5,1) -- (3.5,2) -- cycle;
   \draw[color=black] (0.5,-2) node {\large{$\C F$}};
    \end{tikzpicture}
    } 

\\
\hline
    {\vspace{10pt}
    \begin{tikzpicture}[scale=0.7,line cap=round,line join=round,x=1cm,y=1cm,baseline={(0,-1.7)}]
   \filldraw[fill=blue, fill opacity=0.3, draw=black, line width=0.5pt]
    (-1,0) -- (-1,1) -- (0,1) -- (0,0) -- cycle;
    \filldraw[fill=cyan, fill opacity=0.3, draw=black, line width=0.5pt]
    (-1,0) -- (-2,0) -- (-2.5,0.5) -- (-1.5,0.5) -- cycle;
    \filldraw[fill=cyan, fill opacity=0.3, draw=black, line width=0.5pt]
    (-2.5,0.5) -- (-1.5,0.5) -- (-1,1) -- (-2,1) -- cycle;
    \filldraw[fill=orange, fill opacity=0.3, draw=black, line width=0.5pt]
    (0,0) -- (1,0) -- (1.5,0.5) -- (0.5,0.5) -- cycle;
    \filldraw[fill=orange, fill opacity=0.3, draw=black, line width=0.5pt]
    (1.5,0.5) -- (0.5,0.5) -- (0,1) -- (1,1) -- cycle;
    \filldraw[fill=red, fill opacity=0.4, draw=black, line width=0.5pt]
    (0,1) -- (0,2) -- (0.5,2.5) -- (0.5,1.5) -- cycle;
    \filldraw[fill=red, fill opacity=0.4, draw=black, line width=0.5pt]
    (0.5,2.5) -- (0.5,1.5) -- (1,1) -- (1,2) -- cycle;
    \filldraw[fill=violet, fill opacity=0.4, draw=black, line width=0.5pt]
    (-1,0) -- (-1,-1) -- (-1.5,-1.5) -- (-1.5,-0.5) -- cycle;
    \filldraw[fill=violet, fill opacity=0.4, draw=black, line width=0.5pt]
    (-1.5,-1.5) -- (-1.5,-0.5) -- (-2,0) -- (-2,-1) -- cycle;
    \draw[color=black] (-0.5,-2.5) node {\large{$\C S_4$}};
    \end{tikzpicture}
    }   
    & 
   {\vspace{35pt}
   \begin{tikzpicture}[scale=0.7,line cap=round,line join=round,x=1cm,y=1cm,baseline={(0,-1.5)}]
     \filldraw[fill=blue, fill opacity=0.3, draw=black, line width=0.5pt]
    (-1,0) -- (-1,1) -- (0,1) -- (0,0) -- cycle;
    \filldraw[fill=cyan, fill opacity=0.3, draw=black, line width=0.5pt]
    (-1,0) -- (-2,0) -- (-2.5,0.5) -- (-1.5,0.5) -- cycle;
    \filldraw[fill=cyan, fill opacity=0.3, draw=black, line width=0.5pt]
    (-2.5,0.5) -- (-1.5,0.5) -- (-1,1) -- (-2,1) -- cycle;
    \filldraw[fill=orange, fill opacity=0.3, draw=black, line width=0.5pt]
    (0,0) -- (1,0) -- (1.5,0.5) -- (0.5,0.5) -- cycle;
    \filldraw[fill=orange, fill opacity=0.3, draw=black, line width=0.5pt]
    (1.5,0.5) -- (0.5,0.5) -- (0,1) -- (1,1) -- cycle;
    \filldraw[fill=red, fill opacity=0.4, draw=black, line width=0.5pt]
    (0,1) -- (0,2) -- (0.5,2.5) -- (0.5,1.5) -- cycle;
    \filldraw[fill=red, fill opacity=0.4, draw=black, line width=0.5pt]
    (0.5,2.5) -- (0.5,1.5) -- (1,1) -- (1,2) -- cycle;
    \filldraw[fill=violet, fill opacity=0.4, draw=black, line width=0.5pt]
    (-1,0) -- (-1,-1) -- (-1.5,-1.5) -- (-1.5,-0.5) -- cycle;
    \filldraw[fill=violet, fill opacity=0.4, draw=black, line width=0.5pt]
    (-1.5,-1.5) -- (-1.5,-0.5) -- (-2,0) -- (-2,-1) -- cycle;
    \filldraw[fill=magenta, fill opacity=0.4, draw=black, line width=0.5pt]
    (1,0) -- (1,-1) -- (0.5,-1.5) -- (0.5,-0.5) -- cycle;
    \filldraw[fill=magenta, fill opacity=0.4, draw=black, line width=0.5pt]
    (0.5,-1.5) -- (0.5,-0.5) -- (0,0) -- (0,-1) -- cycle;
    \filldraw[fill=yellow, fill opacity=0.4, draw=black, line width=0.5pt]
    (-2,1) -- (-2,2) -- (-1.5,2.5) -- (-1.5,1.5) -- cycle;
    \filldraw[fill=yellow, fill opacity=0.4, draw=black, line width=0.5pt]
    (-1.5,2.5) -- (-1.5,1.5) -- (-1,1) -- (-1,2) -- cycle;
     \draw[color=black] (-0.5,-2.7) node {\large{$\C S_6$}};
    \end{tikzpicture}
    } 

\\
\hline

\end{tabular}
\end{scriptsize}
\caption{\label{fi:families}
Examples of $2$-clusters in $\C A$, $\C B$, $\C C_1$, $\C C_2$, $\C E$, $\C F$, $\C S_4$ and $\C S_6$ for $r=4$. Here distinct colours represent different $1$-clusters.
}
\end{figure}

\begin{lemma}\label{lem:str}
The following two statements hold.
  \begin{itemize}
      \item For any $F\in \C M_2$ with $|F|\ge 9$, $F$ belongs to $\mathcal{A},\mathcal{B},\mathcal{C}_1,\mathcal{C}_2,\mathcal{E},\mathcal{F}$ or $\mathcal{S}_i$ with $i\in [4,3\binom{r}{2}]$.
      \item For any $F\in \C M_2$ with a composition $(e_1,\dots ,e_m)$, we have 
      \begin{equation}\label{eq:claim}
        |\p{1}{F}|=\sum _{i\in [m]}\bigg(e_i\binom{r}{2}-e_i+1\bigg)\quad\text{and}\quad |\pp{1}{2}{F}|\ge 1-m+\sum_{i=1}^m (e_i-1) (r-2)^2. 
      \end{equation}
  \end{itemize}
\end{lemma}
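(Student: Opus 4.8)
\textbf{Proof plan for Lemma~\ref{lem:str}.}

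The plan is to prove the second bullet first, since it is the cleaner statement and essentially a bookkeeping exercise, and then use the structural analysis for the first bullet. For the displayed identity and inequality in~\eqref{eq:claim}, I would fix $F\in\C M_2$ with composition $(e_1,\dots,e_m)$ and use the Trimming Lemma (Lemma~\ref{lm:Trim}) for the $\Merge{\{1\}}{\{2\}}$-merging, so that $F$ is built by successively adding $1$-clusters $F_1,\dots,F_m$ where each $F_i$ is $\OMerge{}{2}$- or $\OMerge{2}{}$-mergeable with $\bigcup_{j<i}F_j$. By Lemma~\ref{lm:Trees} each $F_i$ is an $e_i$-tree, so by~\eqref{eq:tree} it contributes $e_i\binom r2-e_i+1$ pairs to its own $2$-shadow. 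When we merge via a pair $uv$, the crucial point is that $uv$ is the \emph{only} pair shared between $V(F_i)$ and the union of the earlier blocks: because if two vertices of $V(F_i)\setminus\{u,v\}$ were also vertices of an earlier block, the $2$-shadows would overlap in more than one pair, but at the moment of merging a fresh $1$-cluster has only the merging pair in common with the rest (here one needs that the merging pair $uv$ lies in $\p1{F_i}$ and $F_i$ is a tree attached along exactly that pair — this is where $\cG r8$-freeness and the tree structure are used to rule out extra coincidences). Hence the $2$-shadows $\p1{F_i}$ pairwise intersect in exactly one pair each time a block is glued on, giving $|\p1F|=\sum_i(e_i\binom r2-e_i+1)-(m-1)\cdot 1$; wait — more carefully, each gluing identifies exactly one pair already counted, so $|\p1F|=\sum_i\bigl(e_i\binom r2-e_i+1\bigr)-(m-1)$. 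Re-examining the claimed formula, the stated right-hand side $\sum_i(e_i\binom r2-e_i+1)$ must therefore already account for this, so in fact what must be shown is that the merging pairs are \emph{all distinct} and each is newly created only in the sense that the total overcount is absorbed; I would verify this by induction on $m$, tracking that the $m-1$ merging pairs are pairwise distinct and the inclusion–exclusion collapses to the stated sum. For the second inequality in~\eqref{eq:claim}, each $e_i$-tree contributes at least $(e_i-1)(r-2)^2$ pairs to $\pp1{2}{}$ by~\eqref{eq:tree}, and merging along a single pair can destroy the ``$1$-escaping'' status of at most... — here I would argue that gluing block $F_i$ along $uv$ can turn at most one previously $\OutIn1{2}$-claimed pair into a $1$-claimed pair (namely any pair that becomes covered), losing the stated $m-1$ total; summing gives $|\pp1{2}{F}|\ge\sum_i(e_i-1)(r-2)^2-(m-1)$, matching $1-m+\sum(e_i-1)(r-2)^2$.

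For the first bullet, the strategy is case analysis on the non-increasing composition $(e_1,\dots,e_m)$ of $F$, using $|F|=\sum e_i\ge 9$ together with $\cG r8$-freeness. The key constraints are: by Lemma~\ref{lm:Trees} each $e_i\le 7$; by Lemma~\ref{lm:flex-edge} every tree-block of size $\ge2$ has at least two flexible edges, and an exceptional $2$-cluster (one with $\ge 9$ edges, hence containing an $8$-configuration would violate $\cG r8$-freeness if any flexible edge survived) \emph{cannot have a flexible edge} in $F$; each flexible $(r-2)$-set of a block must therefore be ``killed'' by being the site of a merge, and each merge kills at most one flexible set on each side. This severely limits how blocks can be combined: a block of size $e_i$ has at least two flexible sets, so it must participate in at least two merges (as the $A$-side or $B$-side), and since a diamond ($2$-tree) has both edges flexible, diamonds must be glued at both of their two flexible sets. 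Counting merges versus blocks via a handshake-type argument (the merge graph on blocks has $m-1$ edges, being a tree by the Trimming Lemma, yet each block needs degree $\ge$ number of its flexible sets that aren't otherwise killed) forces $m$ to be small and pins down the multiset of block sizes to exactly the ones appearing in $\mathcal A,\mathcal B,\mathcal C_1,\mathcal C_2,\mathcal E,\mathcal F,\mathcal S_i$. Once the composition is fixed, one checks the precise way the merges can be arranged, again using triangle-/$C_4$-type obstructions inside the $2$-shadow and the $\cG r8$-freeness to eliminate all but the listed configurations; the upper cutoff $i\le 3\binom r2$ for $\mathcal S_i$ comes from the fact that a single edge $S_1$ has only $\binom r2$ pairs in its shadow and each diamond merged onto the growing cluster via a pair of $\p1{S_1}$ can be charged so that no pair is used more than $3$ times before a $k$-configuration appears.

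The main obstacle I anticipate is the first bullet: the case analysis is genuinely delicate because one must simultaneously track (a) which $(r-2)$-flexible sets of each block get neutralised by which merge, (b) that no $8$-configuration or denser $\ell^-$-configuration is created, and (c) that the resulting object is \emph{not} further $\Merge{}2$-mergeable (so it is a genuine $2$-cluster, not a partial one). Keeping these three constraints consistent across all compositions with $\sum e_i\ge 9$ and $\max e_i\le 7$ is where the real work lies; the bookkeeping for~\eqref{eq:claim} in the second bullet, by contrast, is routine once the ``single shared pair per merge'' fact is established. I would organise the first bullet by first bounding $m\le 5$ (or the relevant small bound) via the flexible-set/merge-count handshake, then enumerating compositions of length $\le 5$ with the size constraints, and finally for each surviving composition doing the short geometric argument that it must be (isomorphic to) an element of the named families.
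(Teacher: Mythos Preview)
Your plan has two genuine gaps, one in each bullet.

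\textbf{For the formula $|\p{1}{F}|=\sum_i\bigl(e_i\binom r2-e_i+1\bigr)$.} Your back-and-forth about whether to subtract $m-1$ reveals a real misunderstanding. In a $\Merge{}{2}$-merge via $uv$, exactly one side $1$-claims $uv$ and the other side $\OutIn{1}{2}$-claims it: if both $1$-claimed $uv$ they would already lie in the same $\OneCluster$. Hence the sets $\p1{F_i}$ are genuinely \emph{pairwise disjoint} (once you know $V(F_i)\cap V(\bigcup_{j<i}F_j)=\{u_i,v_i\}$), and there is no $-(m-1)$ correction at all. Your attempt to make ``inclusion--exclusion collapse'' is chasing a phantom. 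The actual work is proving the two-vertex overlap: for a partial $2$-cluster with at most $7$ edges this is immediate from $\cG r8$-freeness, but for $|F|\ge 9$ the last block added could in principle overlap in more than two vertices, and the paper handles this only \emph{inside} the structural case analysis (see equation~\eqref{eq:uivi} and the case-by-case verification of~\eqref{eq:claim}). So you cannot cleanly prove~\eqref{eq:claim} first and the classification second; they are intertwined for large $F$.

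\textbf{For the classification.} Your plan to ``first bound $m\le 5$ via a flexible-set/merge handshake'' cannot work as stated: the family $\mathcal S_i$ has $m=i+1$, which ranges up to $3\binom r2+1$, so $m$ is not uniformly small. You acknowledge this when discussing $\mathcal S_i$ but then contradict yourself in the final paragraph. The paper's approach is genuinely different: rather than bounding $m$, it first locates inside any $F$ with $|F|\ge 9$ a partial $2$-cluster $F'$ with \emph{exactly} $9$ edges whose merging sequence can be reordered (via two applications of Lemma~\ref{lm:Trim}) so that the first and last blocks are diamonds and the middle blocks have exactly $5$ edges in total. This reduces the problem to a finite case analysis on the ordered compositions of $5$ sandwiched between two $2$'s, namely $(2,5,2)$, $(2,4,1,2)$, $(2,2,3,2)$, $(2,1,1,3,2)$, $(2,2,1,2,2)$, $(2,2,1,1,1,2)$, $(2,1,1,1,1,1,2)$ and their permutations. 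The flexible-edge argument you sketch does appear (as Claim~\ref{cl:overlap}), but it is used \emph{within} each case to pin down how the blocks attach, not to bound $m$ globally. The $\mathcal S_i$ family then emerges as the only case where $F'$ can be extended further, and the bound $i\le 3\binom r2$ is proved by a separate inductive argument on the added diamonds.
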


\begin{proof}
Consider $F\in \C M_2$ with $|F|\ge 9$ and assume that $F$ is obtained by merging $m$ 1-clusters $T_1,\dots ,T_m\in \C M_1$ in this order.

Let $s\in [m]$ be the first index such that $|\bigcup_{i=1}^{s} T_i|\geq 9$ and  let $H:=\bigcup_{i=1}^{s-1} T_i$. Then $|H|\leq 7$. As $F$ is $(8r-14,8)$-free, we have $|T_{s}|\geq 2$. 
If $T_{s}$ $1$-claims a pair which is \OutIn{1}{2}-claimed by $H$, that is, $T_s$ and $H$ are $\OMerge{}{2}$-mergeable, then by Corollary~\ref{cr:Trim} we can remove some edges from $T_{s}$ to get a $(8r-14,8)$-configuration inside $H\cup T_s$, a contradiction. Thus
$T_{s}$ must \OutIn{1}{2}-claim some pair $xy$ $1$-claimed by $H$. Let $D\subseteq T_s$ be the diamond \OutIn{1}{2}-claiming~$xy$. Note that $|H\cup D|\geq 9$, as otherwise using Corollary~\ref{cr:Trim}, we can remove some edges from $T_{s}\setminus D$ one by one to obtain an $(8r-14,8)$-configuration. Thus $|H|=7$. 

Let $T_1':=T_s$ and let $T_2'\in \{T_1,\dots,T_{s-1}\}$ be a $\Merge{}{1}$-cluster $2$-mergeable with $T_s$ via $xy$. 
Let $(T_2',\dots,T_{s}')$ be the ordering of $\{T_1,\dots,T_{s-1}\}$ returned by Lemma~\ref{lm:Trim} for the partial  $\Merge{}{2}$-clusters $T_2'\subseteq \bigcup_{i=1}^{s-1} T_i$. Therefore,  for each $i=2,\dots,s$ the \OneCluster\ $T_i'$ is $\Merge{}{2}$-mergeable with the partial \TwoCluster\ $\bigcup_{j=1}^{i-1}T_j'$. Let $t\in [s]$ be the first index such that $|\bigcup_{i=1}^{t} T_i'|\geq 9$. Set $H':=\bigcup_{i=1}^{t-1} T_i'$.
By the same argument as in the previous paragraph, we have that $|H'|=7$ and there is a diamond $D'\subseteq T_t'$ such that $H'$ and $D'$ are $\OMerge{}{2}$-mergeable.

Thus, we have a partial $\Merge{}{2}$-cluster $F':=\bigcup_{i=1}^{t} T_i'$  with at least $9$ edges built via the sequence $(T_1',T_2',\dots,T_t')$
so that the first $\Merge{}{1}$-cluster $T_1'=T_s$ (resp.\ the last $\Merge{}{1}$-cluster $T_t'$) is merged with the rest through a pair which is \OutIn{1}{2}-claimed by the diamond $D\subseteq T_s$ (resp.\ $D'\subseteq T_t'$). Here we have the freedom to trim one or both of these two clusters, leaving any number of edges in each except exactly 1 edge. It routinely follows that $|T_1'|=|T_t'|=2$ (that is, $T_1'=D$ and $T_t'=D'$) and  the \OneCluster{}s $T_i'$ with $2\leq i\leq t-1$ contain exactly 5 edges in total.

Furthermore, we note that $F'$, with $|F'|=9$, cannot contain flexible edges, as otherwise we can remove such an edge to get an $(8r-14,8)$-configuration. This allows us to prove the following claim about the 2-cluster $F'$.

\begin{claim}\label{cl:overlap}
   Let $(S_1,S_2,\dots,S_t)$ be an arbitrary sequence of $1$-clusters that can be merged in this order to give~$F'$. 
   For any $i\in [t]$ and any flexible edge $e$ of $\bigcup_{j=1}^i S_j$, there exists a tree $S_k$ with $k\in [i+1,t]$ such that $\bigcup_{j=1}^i S_j$ $\Merge{1}{2}$-merges $S_k$ via some pair $xy$ intersecting the flexible set of $e$.
\end{claim}

\begin{proof}
    For $i\in [t]$, let $u_iv_i$ be a pair such that $\bigcup_{j=1}^{i-1} S_j$ and $S_{i}$ are $\Merge{1}{2}$-mergeable via $u_iv_i$. Due to $\cG{r}{8}$-freeness, one can easily prove by induction that for each $i\in [t-1]$, the partial 2-cluster $\bigcup_{j=1}^i S_j$ (having $\sum\limits^i_{j=1}|S_j|$ edges by definition) has exactly $(r-2)\sum\limits^i_{j=1}|S_j|+2$ vertices. Therefore the following holds:
    \begin{equation}\label{eq:uivi}
    \mbox{
    for $i\in [t-1]$, we have $V(\bigcup_{j=1}^{i-1} S_j)\cap V(S_{i})=\{u_i,v_i\}$.}
    \end{equation}
    
    Given a flexible edge $e\in \bigcup_{j=1}^i S_j$, the flexible set $W$ of $e$ must intersect some edge of $\bigcup_{j=i+1}^tS_j$, as otherwise $e$ is a flexible edge of the whole $F'$ and we can remove it to get an $8$-configuration. Let $k\in [i+1,t]$ be the smallest integer such that $V(S_k)$ and $W$ intersect. Let $x$ be a vertex in $V(S_k)\cap W$. We prove that $S_k$ satisfies the property in Claim~\ref{cl:overlap}. First, if $k\le t-1$, then by~(\ref{eq:uivi}) we have that $V(\bigcup_{j=1}^{k-1} S_j)\cap V(S_{k})=\{u_k,v_k\}$ and then $x\in \{u_k,v_k\}$. Suppose to the contrary that $S_k$ $\Merge{1}{2}$-merges with some $S_\ell$ with $i+1<\ell\le k-1$ rather than with $\bigcup_{j=1}^i S_j$. Then it means that $x\in \{u_k,v_k\}\subseteq V(S_\ell)$ and $x\in V(S_\ell)\cap W$, which contradicts to the fact that $k$ is smallest integer such that $V(S_k)$ and $W$ intersect.%
    \hide{\op{I am confused why we do not stop the proof here: it looks that we already proved all what is stated in the claim.}\sm{Since~$(\ref{eq:uivi})$ only holds for $i\le t-1$, for the final 1-cluster $S_t$, $S_t$ might overlap $\ge 3$ vertices (including $u_tv_t$) with the current structure. So far, if $k=t$, then $S_t$ might intersect $W$ via some other vertex rather than $u_t,v_t$. This does not satisfy our statement that $S_t$ $\Merge{1}{2}$-merges $\bigcup_{j=1}^i S_j$ via some pair $xy$ intersecting $W$ (namely, the merging pair $u_tv_t$ should intersect $W$)} 
    Now it suffices to assume $k=t$. 
    }
    
    Assume now that $k=t$ as otherwise we are done by the above argument.
    If there exists $x\in V(S_t)\cap W$ such that $x$ belongs to $\{u_t,v_t\}$, then by a similar argument, $S_t$ is as desired. Otherwise $\{u_t,v_t\}\cap W=\varnothing$, and $F'=\bigcup_{j=1}^{t} S_j$ has exactly $9$ edges with at most
    \[
    (r-2)\sum\limits^{t-1}_{j=1}|S_j|+2+(r-2)|S_t|-|V(S_t)\cap W|=9(r-2)+2-|V(S_t)\cap W|
    \]
    vertices. Since $k=t$ is the smallest integer such that $V(S_k)$ and $W$ intersect, if we remove the edge $e$ and the vertex set $W\setminus V(S_t)$ from $F'$ then we obtain an $r$-graph with $8$ edges and at most 
    \[
    9(r-2)+2-|V(S_t)\cap W|-|W
    \setminus V(S_t)|=9(r-2)+2-|W|=8(r-2)+2
    \]
    vertices. This contradicts the $8$-freeness of~$F'$. 
\end{proof}


For $i\in [t]$, define  $e_i':=|T_i'|$. We know that $e_1'=e_t'=2$ and $\sum_{i=1}^te_i'=9$.
This leaves us with several possibilities for the sequence $(e_1',\dots,e_t')$.
    
\noindent\textbf{Case 1.} $(2,5,2)$

We know that the unique $5$-tree $T$ in $F'$ $\OMerge{1}{2}$-merges with two diamonds $D,D'$ via two merging pairs, say from $e,e'$, respectively. Then by $\cG{r}{8}$-freeness, we have $q(T)=2$, as otherwise we are able to remove one flexible edge not containing the merging pairs to obtain an $8$-configuration. Thus the partial 2-cluster $F'$ belongs to $\mathcal{A}$. Note that $|V(D)\cap V(D')|\le 1$, as otherwise we could trim one edge of $T$ and get an $8$-configuration. Also, this configuration $F'$ cannot $2$-merge with further 1-clusters. Indeed, if $F'$ $\OMerge{1}{2}$-merges with some 1-cluster $S_0$ via a merging pair from a diamond $D_0\subseteq S_0$, then we can trim either $D\cup \{e\}$ or $D'\cup \{e'\}$ from $F'\cup D_0$ to obtain an $8$-configuration. On the other hand, if $F'$ $\OMerge{2}{1}$-merges with some 1-cluster $S_0$ via a merging pair from an edge $e_0\in S_0$, then we can trim either $D$ or $D'$ from $F'\cup \{e_0\}$ to obtain an $8$-configuration. Therefore $F=F'$. Also,~(\ref{eq:claim}) follows easily from the above claims (combined with~\eqref{eq:uivi}).

\noindent\textbf{Case 2.} $(2,4,1,2)$ and its permutations

By Lemma~\ref{lm:Trim}, there is a merging  sequence starting with the $1$-cluster $S_1$ which is a single edge. 
Then the only edge $e$ of $S_1$ is a flexible edge, and by Claim~\ref{cl:overlap}, there exists a 1-cluster in the remaining part, say $S_2$, $\OMerge{2}{1}$-merging $S_1$. The 1-cluster $S_2$ is a $2$-tree or $4$-tree, and $e$ is still a flexible edge of $S_1\cup S_2$. Applying Lemma~\ref{lm:Trim} with the partial $\Merge{}{2}$-cluster $S_1\cup S_2$ and Claim~\ref{cl:overlap} with the flexible edge $e$ of $S_1\cup S_2$, we obtain another 1-cluster, say $S_3$, $\OMerge{2}{1}$-merging $S_1$. If $S_2$ and $S_3$ are both $2$-trees, then when $\Merge{1}{2}$-merging the remaining $4$-tree $S_4$, we can trim one edge from $S_4$ (using Corollary~\ref{cr:Trim}) to get an $8$-configuration. Thus $S_2$ and $S_3$ are a $2$-tree and a $4$-tree in some order, also they attach to $S_1$ via different pairs as otherwise $S_1\cup S_2\cup S_3$ has at least 2 flexible edges. 
Assume by symmetry that $S_3$ is a $4$-tree. Then the diamond $S_4$ must $\OMerge{2}{1}$-merge $S_3$ via a pair $u_4v_4$ intersecting a flexible set of~$S_3$ that is still flexible in $S_1\cup S_2\cup S_3$. This means that $F'\in \mathcal{B}$. 

Let us argue that no pair in $\p{12}{S_4}\setminus \{u_4v_4\}$ is $1$-claimed or $2$-claimed by $S_1\cup S_2\cup S_3$. Indeed, suppose on the contrary that there exists a pair $ab\in \p{12}{S_4}\setminus \{u_4v_4\}$ $1$-claimed or $2$-claimed by $S_1\cup S_2\cup S_3$. If $ab\in \p{12}{S_1\cup S_3}$, then $S_1\cup S_3\cup S_4$ forms a configuration of $\cG{r}{8}$. Otherwise we have $ab\in \p{12}{S_2}$ (since $\p{12}{S_1\cup S_2\cup S_3}=\p{12}{S_1\cup S_3}\cup \p{12}{S_2}$ in this case). Then we can trim $S_1$ and get an $8$-configuration $S_2\cup S_3\cup S_4$. 

Also, $F'$ cannot $2$-merge with further 1-clusters. Indeed, if $F'$ $\OMerge{1}{2}$-merges with some 1-cluster $S_0$ via a merging pair from a diamond $D_0\subseteq S_0$, then we can trim either $S_1\cup S_2$ or $S_4$ together with an edge from $S_3$ containing $u_4v_4$ from $F'\cup D_0$ to obtain an $8$-configuration. On the other hand, if $F'$ $\OMerge{2}{1}$-merges with some 1-cluster $S_0$ via a merging pair from an edge $e_0\in S_0$, then we can trim either $S_2$ or $S_4$ from $F'\cup \{e_0\}$ to obtain an $8$-configuration. 
It follows that $F=F'$ and~(\ref{eq:claim}) holds.

In the subsequent discussions, we always denote by $u_iv_i$ a pair via which $\bigcup_{j=1}^{i-1} S_j$ and $S_{i}$ are $\Merge{1}{2}$-mergeable; note that this pair is unique if $i\le t-1$ by~\eqref{eq:uivi}.




\noindent\textbf{Case 3.} $(2,2,3,2)$ and its permutations

Using Lemma~\ref{lm:Trim}, we consider the sequence starting from the $3$-tree $S_1$. If $S_1$ consists of three flexible edges $e_1,e_2,e_3$, i.e.\ $S_1$ is a $3$-star, then by Claim~\ref{cl:overlap}, there exists a $2$-tree $S_2$ $\Merge{1}{2}$-merging $S_1$ via some pair $xy$ where $x$ belongs to the flexible set of an edge of $S_1$, say,~$e_1$. If $S_2$ $\OMerge{1}{2}$-merges $S_1$ via $xy$, then $y$ belongs to the flexible set of $e_2$ or $e_3$. Without loss of generality, let us assume that $y\in e_2$ and $e\in S_2$ is an edge containing $xy$. Then $e_3$ and the edge of $S_2$ different from $e$ are two flexible edges of $S_1\cup S_2$. By using Claim~\ref{cl:overlap} twice, the remaining two diamonds $S_3, S_4$ will $\Merge{1}{2}$-merge with $S_1\cup S_2$ via pairs in $e_3$ and in the edge of $S_2$ different from $e$ respectively. We actually know that $S_3, S_4$ will $\OMerge{2}{1}$-merge with $S_1\cup S_2$, since otherwise we can trim one edge from $S_3$ or $S_4$ to obtain an $8$-configuration. By an argument similar to those in the previous two cases, no further $2$-merging is possible, as otherwise we can trim a diamond~(together with an edge if needed) to obtain an $8$-configuration. Also, no pair in $\p{12}{S_4}\setminus \{u_4v_4\}$ is $1$-claimed or $2$-claimed by $S_1\cup S_2\cup S_3$ by $\cG{r}{8}$-freeness. Thus $F=F'\in \C C_1$ and~(\ref{eq:claim}) holds. On the other hand, if $S_2$ $\OMerge{2}{1}$-merges $S_1$, then both $x$ and $y$ belong to $e_1$. 
We still have two flexible edges $e_2,e_3\in S_1\cup S_2$. By Claim~\ref{cl:overlap}, either $S_3$ $\OMerge{1}{2}$-merges $S_1\cup S_2$ via some pair intersecting both $e_2$ and $e_3$ (which is exactly as in the previous case), or $S_3$ $\OMerge{2}{1}$-merges $S_1\cup S_2$ via a pair in~$e_2$. In the latter case, $S_4$ will $\OMerge{2}{1}$-merge $S_1\cup S_2\cup S_3$  via a pair of $e_3$. 
By similar arguments, $F=F'\in \C C_1$ and~(\ref{eq:claim}) holds.

Now let us assume that $S_1$ has exactly two flexible edges $e_1,e_2$. Note that in this case $e_1,e_2$ cannot form a diamond. By Claim~\ref{cl:overlap}, there exist two diamonds $S_2,S_3$ $\Merge{1}{2}$-merging $S_1$ via two pairs intersecting the flexible sets of $e_1,e_2$ separately. If one of $S_2,S_3$ $\OMerge{1}{2}$-merges with $S_1$, say $S_2$, then $S_1\cup S_2\cup S_3$ would have a flexible edge  from $S_2$. Then $S_4$ will $\OMerge{2}{1}$-merge with $S_1\cup S_2\cup S_3$ (specifically, via the flexible edge of $S_2$). Let us argue that no pair in $\p{12}{S_4}\setminus \{u_4v_4\}$ is $1$-claimed or $2$-claimed by $S_1\cup S_2\cup S_3$. Indeed, suppose on the contrary that there exists a pair $ab\in \p{12}{S_4}\setminus \{u_4v_4\}$ $1$-claimed or $2$-claimed by $S_1\cup S_2\cup S_3$. If $ab\in \p{12}{S_1\cup S_2}$, then $S_1\cup S_2\cup S_4$ forms a configuration of $\cG{r}{8}$. Otherwise we have $ab\in \p{12}{S_3}$, then we can trim an edge of $S_2$ containing $u_4v_4$ from $S_1\cup S_2\cup S_3\cup S_4$ and get an $8$-configuration. Similarly to the previous cases, no further $2$-merging is possible. Again $F=F'\in \C C_1$ and~(\ref{eq:claim}) is satisfied. Suppose that both $S_2,S_3$ $\OMerge{2}{1}$-merge with $S_1$ via distinct pairs from $e_1,e_2$ respectively. Then $S_4$ also $\OMerge{2}{1}$-merges with $S_1\cup S_2\cup S_3$, as otherwise we are able to trim an edge of $S_4$ using Corollary~\ref{cr:Trim} and get an $8$-configuration. Also due to $\cG{r}{8}$-freeness,~(\ref{eq:claim}) holds. For further mergings, $F'$ cannot $\OMerge{2}{1}$-merge other \OneCluster{}s, as we could always trim $S_2$, $S_3$ or $S_4$ to find an $8$-configuration. However it is possible for $F'$ to $\OMerge{1}{2}$-merge a \OneCluster{} $S_5$. Then $S_4$ and $S_5$ must $\OMerge{2}{1}$-merge with $S_2\cup \{e_1\}$ and $S_3\cup \{e_2\}$ respectively. Indeed, if not, one can trim $S_2\cup \{e_1\}$ or $S_3\cup \{e_2\}$, obtaining an $8$-configuration. Moreover $S_5$ is a diamond, as otherwise we can trim $S_4$ and $S_2$. Similarly, by $\cG{r}{8}$-freeness, no pair in $\p{2}{S_5}\setminus \{u_5v_5\}$ is $1$-claimed or $2$-claimed by $F'$ and no further $2$-merging is possible. Hence $F=F'\cup S_5\in \C C_2$ and~(\ref{eq:claim}) holds for~$F$.

\noindent\textbf{Case 4.} $(2,1,1,3,2)$ and its permutations

By Lemma~\ref{lm:Trim}, let us re-order the sequence starting from a $1$-tree $S_1=\{e_1\}$. Notice that neither a $3$-tree nor a $1$-tree can be the last \OneCluster{} in the sequence, as otherwise we can trim one edge from $F'$ and the remaining $r$-graph is an $8$-configuration. By Claim~\ref{cl:overlap}, there exists a tree $S_2$ $\OMerge{2}{1}$-merging $S_1$. Since $e_1$ is still a flexible edge of $S_1\cup S_2$, we use Claim~\ref{cl:overlap} again and obtain a \OneCluster{} $S_3$ $\OMerge{2}{1}$-merging $S_1\cup S_2$ via a pair in $e_1$. The next \OneCluster{} $S_4$ in the sequence must be the remaining $1$-tree $\OMerge{1}{2}$-merging $S_1\cup S_2\cup S_3$. Since the $3$-tree is not the last \OneCluster{} in the sequence, one of $S_2$ and $S_3$ is the $3$-tree and the other is a diamond. Assuming $S_2$ is a diamond and $S_3$ is a $3$-tree, then there is an edge $e$ of $S_3$ such that $e$ is a flexible edge of $S_1\cup S_2\cup S_3$. Since a $1$-tree cannot be the last cluster, we derive that $S_4$ $\OMerge{1}{2}$-merges $S_1\cup S_2\cup S_3$ via a pair intersecting $e$, and the subsequent $S_5$ would $\OMerge{2}{1}$-merge with $S_4$. By $\cG{r}{8}$-freeness, no pair in $\p{2}{S_5}\setminus \{u_5v_5\}$ is $1$-claimed or $2$-claimed by $S_1\cup S_2\cup S_3\cup S_4$ and $F'$ cannot $\Merge{1}{2}$-merge more trees. Therefore $F'=F\in \mathcal{E}$ and (\ref{eq:claim}) holds.



\noindent\textbf{Case 5.} $(2,2,1,2,2)$ and its permutations

Let $S_1$ be the unique $1$-tree in the sequence, and let $S_2,S_3,S_4,S_5$ be the remaining 4 diamonds. As before, by Claim~\ref{cl:overlap}, some two diamonds, say $S_2$ and $S_3$, $\OMerge{2}{1}$-merge $S_1$ via two distinct pairs of $\p{1}{S_1}$. 

If $S_4$ $\OMerge{1}{2}$-merges $S_1\cup S_2\cup S_3$ via a pair of $e\in S_4$, then the edge of $S_4$ different from $e$ is a flexible edge of $S_1\cup S_2\cup S_3\cup S_4$. The last \OneCluster{} $S_5$ would $\OMerge{2}{1}$-merge $S_4$, since a $\OMerge{1}{2}$-merging of $S_5$ and $S_4$ would enable us to trim one edge and this contradicts $8$-freeness. Similarly, due to $\cG{r}{8}$-freeness, no pair in $\p{2}{S_5}\setminus \{u_5v_5\}$ is $1$-claimed or $2$-claimed by $S_1\cup S_2\cup S_3\cup S_4$ and $F'$ admits no further $\Merge{1}{2}$-mergings. This case implies $F=F'\in \mathcal{F}$ and (\ref{eq:claim}).

The other case is that $S_4$ $\OMerge{2}{1}$-merges $S_1\cup S_2\cup S_3$. Then the next merging is that $S_5$ $\OMerge{2}{1}$-merges $S_1\cup S_2\cup S_3\cup S_4$ (as otherwise $F'$ would contain a flexible edge coming from $S_5$). Thus $F'\in \mathcal{S}_4$. It remains to prove that $F\supseteq F'$ is in $ \mathcal{S}_i$ for some $i\in [4,3\binom{r}{2}]$ and~(\ref{eq:claim}) holds. 

Let $F$ be made by starting with $F'$ and consecutively merging 1-clusters $S_6,S_7,\dots,S_m$ of $F\setminus F'$ in this order. 
Call a 1-cluster $S_i$ for $i\geq 2$ of \emph{type $ab$} if, in the merging chain from $S_1$ to the minimal partial $2$-cluster containing $S_1\cup S_i$, the first merging step is via a pair $ab\in \p{1}{S_1}$. (Note that the vertices $a,b$ are not necessarily in $S_i$, for example, $S_i$ can merge with a diamond $2$-claiming $ab$.) By convention, assume that $S_1$ is of all ${r\choose 2}$ types. 
Observe that at least two of the initial diamonds $S_2,S_3, S_4, S_5$ must be of different types (as otherwise $S_2\cup S_3\cup S_4\cup S_5$ would be an $8$-configuration).

Denote $H_i:=S_1\cup \dots\cup S_{i-1}$. To prove  that $F\in \mathcal{S}_i$ for some $i\in [4,3\binom{r}{2}]$ and~(\ref{eq:claim}) holds, it suffices to prove the following.

\begin{claim}
    For every $i\in [2,m]$, $S_i$ is a diamond that \OutIn{1}{2}-claims some previously \OutIn{}{1}-claimed pair $x_iy_i\in \p{1}{H_i}$, and no pair in $\pp{1}{2}{S_i}\setminus \{x_i,y_i\}$ is \OutIn{}{1}-claimed or \OutIn{}{2}-claimed by $H_i$. Also, $m\le 1+3\binom{r}{2}$.
\end{claim}

\begin{proof}
We prove the first part by induction on $i\in[2,m]$. It is easy to check in the base case $i\in [5]$. Let $i\geq 6$ and let the 1-cluster $S_i$ be of type~$ab$. Note that we have at most 3 diamonds of each type (otherwise the first four diamonds of any fixed type would form a forbidden $8$-configuration). If some edge $e'\in S_i$ \OutIn{}{1}-claims a pair \OutIn{1}{2}-claimed by $H_i$, then by keeping only this edge $e'$ and removing one by one diamonds of types different from $ab$, we can reach a sub-structure with exactly 8 edges, a contradiction. So let the diamond $D_i\subseteq S_i$ \OutIn{1}{2}-claim a pair $x_iy_i\in\p{1}{H_i}$. There are at most two previous diamonds $S_j, S_k$ of the same type as $S_i$ (otherwise $D_i$ with three such diamonds would give an $8$-configuration). It follows that $D_i=S_i$ as otherwise a forbidden 8-edge configuration would be formed by $S_1$, $D_i$, some suitable edge of $S_i\setminus D_i$ plus diamonds $S_j,S_k$ of type $ab$ (if exist) or one or two diamonds \OutIn{1}{2}-claiming a pair in~$\p{1}{S_1}\setminus \{ab\}$ (such diamonds exist among $S_2,S_3,S_4$). If $S_i$ contains some other vertex $z_i\not\in\{x_i,y_i\}$ from an earlier 1-cluster of the same type~$ab$, then some edge $e'$ of $S_i$ shares at least two vertices with $H_i$, again leading to a forbidden $8$-configuration in $H_i\cup \{e'\}$. Thus we are done (with proving the inductive statement and thus $F\in \mathcal{S}_i$ where $i\in [4,3\binom{r}{2}]$ and~(\ref{eq:claim}) holds for this $F$) unless $\pp{1}{2}{S_i}$ contains a pair $uv$ with both vertices in a \OneCluster{} $S_\ell$ of some different type~$a'b'$ (where $\{u,v\}$ may possibly intersect $\{x_i,y_i\}$). If the number of earlier diamonds of types $ab$ and $a'b'$ is at least 2 in total, then by trimming some diamonds if necessary, we can obtain exactly two diamonds that, together with $S_i, S_\ell$, form an $8$-configuration; otherwise we have at most 3 diamonds of type $ab$ or $a'b'$ (including $S_i, S_k$) and these diamonds together with $S_1$ have $i\leq 7$ edges and form an $(i(r-2)+1,i)$-configuration, a contradiction to $\cG{r}{8}$-freeness. 

The final inequality $m\le 1+3\binom{r}{2}$ follows from the fact that there are at most $3$ diamonds of each type $ab\in \p{1}{S_1}$.
\end{proof}

\noindent\textbf{Case 6.} $(2,2,1,1,1,2)$ and its permutations

This case is actually impossible. Indeed, let us consider the sequence starting from a $1$-tree~$S_1$. Using Claim~\ref{cl:overlap} twice, note that some two diamonds, say $S_2$ and $S_3$, would $\OMerge{2}{1}$-merge $S_1$ via two distinct pairs of $\p{1}{S_1}$. Observe that a $1$-tree cannot be the last \OneCluster{} in the sequence, as otherwise we can trim it from $F'$ and obtain  a forbidden $8$-configuration. Therefore, the next two \OneCluster{}s $S_4,S_5$ are $1$-trees $\OMerge{1}{2}$-merging with the previous partial $2$-clusters (and they cannot form a diamond by the merging rule). So $S_4,S_5$ are two flexible edges of $\cup_{i=1}^5S_i$. However, the subsequent merging of the diamond $S_6$ can eliminate only one flexible edge, say $S_4$, from $\cup_{i=1}^5S_i$, and we get a forbidden $8$-configuration by removing $S_5$ from $F'$.


\noindent\textbf{Case 7.} $(2,1,1,1,1,1,2)$ and its permutations

This case is also impossible. Again consider the sequence starting from a $1$-tree $S_1$. Using Claim~\ref{cl:overlap} twice, the two diamonds, say $S_2,S_3$, would $\OMerge{2}{1}$-merge $S_1$ via two distinct pairs of~$\p{1}{S_1}$. This contradicts to the fact that a $1$-tree cannot be the last \OneCluster{} in the sequence.

It remains to prove~(\ref{eq:claim}) for a 2-cluster $F$ with $|F|<9$. Here, by $8$-freeness, we have $|F|\leq 7$. When we construct $F$ by merging 1-clusters one by one, by $\cG{r}{8}$-freeness, each new 1-cluster shares exactly 2 vertices with the current configuration (namely, the pair of vertices through which the merging occurs). Thus (\ref{eq:claim}) follows. 

This finishes the proof of the Lemma~\ref{lem:str}.
\end{proof}

\subsection{Assigning weights}



For every 2-cluster $F$ and every pair $xy\in \binom{V(F)}{2}$, we will define some real $w_F(xy)\in [0,1]$ which we will call the \emph{weight} of $xy$ given by $F$. Then we define 
$$
w(F):=\sum_{xy\in \binom{V(F)}{2}}w_F(xy),\quad 
\mbox{for $F\in \C M_2$},$$
and 
 $$
 w(uv):=\sum_{F\in \C M_2\atop uv\in {V(F)\choose 2}}w_F(xy),\quad\mbox{for $xy\in {V(G)\choose 2}$.}
 $$
If the following inequalities hold for every $xy\in \binom{V(G)}{2}$ and $F\in \C M_2$:
\begin{eqnarray}\label{eq:one}
    w(uv)&\le& 1,
    \\
\label{eq:weight}
    w(F)&\ge& \binom{r}{2}|F|,
\end{eqnarray}
then we would have that
\begin{equation}\label{eq:final_UB}
|G|=\sum _{F\in \C M_2}|F|\leq \sum_{F\in \C M_2}{\binom{r}{2}}^{-1}w(F)\leq {\binom{r}{2}}^{-1}\binom{n}{2}
\end{equation}
giving the desired upper bound. 

We shall adapt two distinct weight assignment strategies for the case when $r\ge 5$ and $r=4$.

\subsubsection{Weight functions for $r\ge 5$}\label{sec:weight-5}

For a $2$-cluster $F$ and a pair $uv\in \binom{V(F)}{2}$, we define the weight
$$w_F(uv) :=\left\{\begin{array}{ll}
1 & \text{ if } 1\in \CI_{F}(uv), \\
1/3 & \text{ if } 2\in \CI_{F}(uv) \text{ and } 1\notin \CI_{F}(uv),
\\
0 & \text{ otherwise.}
\end{array}\right.$$
The following claim shows that our weights satisfy~(\ref{eq:one}).

\begin{claim}\label{cl:w<=1}
    For every $uv\in \binom{V(G)}{2}$, it holds that $w(uv)\le 1$.
\end{claim}

\begin{proof}
Given $uv\in \binom{V(G)}{2}$, let $F_1,\dots, F_s\in \C M_2$ be all $2$-clusters assigning positive weight to $uv$, ordered so that $w_{F_1}(uv)\ge w_{F_2}(uv)\ge \dots \ge w_{F_s} (uv)$.
 By the definition of weights, each $F_i$ satisfies either that $1\in \CI_{F}(uv)$ or that $ 2\in \CI_{F}(uv)$ and $1\notin \CI_{F}(uv)$. 

    If $w_{F_1}(uv)=1$, namely $1\in \CI_{F_1}(uv)$, then $2\notin \CI_F(uv)$ for any other $2$-cluster $F\in \C M_2\setminus \{F_1\}$, as otherwise we would merge them together by the merging rule of $\C M_2$. Therefore, $s=1$ and $w(uv)=w_{F_1}(uv)=1\le 1$.

    If $w_{F_1}(uv)=1/3$ (which implies $2\in \CI_{F_1}(uv)$), then by Lemma~\ref{lm: 2}, the number of $2$-clusters $F\in \C M_2\setminus \{F_1\}$ with $2\in \CI_{F}(uv)$ is at most $2$. Thus $s\le 3$ and $w(uv)\le 3\cdot w_{F_1}(uv)\le 1$.
\end{proof}

Now, it is sufficient to verify~(\ref{eq:weight}) for every $F\in \C M_2$.



\begin{claim}\label{cl:ubr5}
   Let $r\ge 5$. For all $F\in \C M_2$, we have $w(F)\ge \binom{r}{2}|F|$.
\end{claim}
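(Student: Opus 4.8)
The plan is to verify the inequality $w(F)\ge \binom{r}{2}|F|$ separately for each isomorphism type of $2$-cluster identified in Lemma~\ref{lem:str}. Recall that $w(F)=|\p{1}{F}| + \tfrac13|\pp{1}{2}{F}|$ by our weight definition (every $1$-claimed pair contributes $1$, every pair that is $2$-claimed but not $1$-claimed contributes $1/3$, and there is nothing else). So the task reduces to showing
\[
|\p{1}{F}| + \tfrac13\,|\pp{1}{2}{F}| \;\ge\; \binom{r}{2}\,|F|
\]
for every $F\in\C M_2$, and the second part of Lemma~\ref{lem:str} gives us exactly the tools we need: for $F$ with composition $(e_1,\dots,e_m)$ we have $|\p{1}{F}| = \sum_i\big(e_i\binom r2 - e_i + 1\big)$ and $|\pp{1}{2}{F}| \ge 1 - m + \sum_i (e_i-1)(r-2)^2$.

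First I would handle the ``typical'' case, namely $2$-clusters $F$ with $|F|\le 8$, which by $8$-freeness share exactly two vertices between each successively merged $1$-cluster, so in particular $m$ edges really means $|F|=\sum_i e_i$ and the two displayed formulas apply. Writing $k:=|F|=\sum e_i$ and using $\sum_i 1 = m$, the left side becomes at least
\[
k\binom r2 - k + m \;+\; \tfrac13\big(1 - m + (k-m)(r-2)^2\big).
\]
We must show this is at least $k\binom r2$, i.e. $-k + m + \tfrac13\big(1-m+(k-m)(r-2)^2\big)\ge 0$. Treating this as a function of $m$ for fixed $k$, the worst case is $m$ as large as possible, which for a $k$-edge $2$-cluster is $m=k$ (all parts are single edges won't merge — actually $m\le k$ with equality impossible for $k\ge 2$ since a $2$-cluster needs at least one merge; but taking the crude bound $m\le k$ is safe). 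With $m=k$ the expression is $\tfrac13$, which is $\ge 0$; and since the coefficient of $m$ in the expression, namely $1 - \tfrac13 - \tfrac13(r-2)^2 = \tfrac23 - \tfrac13(r-2)^2$, is negative for $r\ge 4$, decreasing $m$ only helps. This settles all $F$ with $|F|\le 8$ (and in fact with $m\le |F|$, composition-formula in force).

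Next I would go through the finitely many exceptional families $\mathcal A,\mathcal B,\mathcal C_1,\mathcal C_2,\mathcal E,\mathcal F$ and $\mathcal S_i$ with $i\in[4,3\binom r2]$. For each, the composition is explicitly known, so $|\p1F|$ is computed exactly and $|\pp12F|$ is bounded below by the second formula in~\eqref{eq:claim}; one then just plugs in and checks the inequality, using $r\ge 5$ where needed. For instance $\mathcal S_i$ has composition with one entry $1$ and $i$ entries $2$, so $m=i+1$, $|F|=2i+1$, $|\p1F| = (1\cdot\binom r2) + i(2\binom r2 - 1) = (2i+1)\binom r2 - i$, and $|\pp12F|\ge 1 - (i+1) + i(r-2)^2 = i(r-2)^2 - i$, giving $w(F)\ge (2i+1)\binom r2 - i + \tfrac{i}{3}\big((r-2)^2-1\big)$, which exceeds $(2i+1)\binom r2$ precisely when $(r-2)^2 - 1 \ge 3$, i.e. $r\ge 4$. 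The families $\mathcal A$ (composition $(5,2,2)$, $|F|=9$), $\mathcal B$ ($(1,2,4,2)$), $\mathcal C_1$ ($(3,2,2,2)$), $\mathcal C_2$ ($(3,2,2,2,2)$, $|F|=11$), $\mathcal E$ ($(3,1,1,2,2)$) and $\mathcal F$ ($(1,2,2,2,2)$) are each a single arithmetic check of the same shape.

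The main obstacle I anticipate is not conceptual but bookkeeping: one must be sure the lower bound for $|\pp12F|$ in~\eqref{eq:claim} is actually strong enough in the tightest exceptional case, and in particular that the ``slack'' term $(r-2)^2$ beats $3$ with room to spare. Since $(r-2)^2\ge 9$ for $r\ge 5$, every inequality above is comfortably true for $r\ge 5$ — indeed it is true already for $r=4$ in the generic count, which is why the genuinely delicate $r=4$ analysis is deferred to the next subsection. So for this claim the key point to get right is simply that, in every case, the deficit $|F|\binom r2 - |\p1F|$ (which equals $\sum_i(e_i-1)$ by the first formula, i.e. $|F|-m$) is compensated by $\tfrac13|\pp12F|\ge \tfrac13\big(1-m+(|F|-m)(r-2)^2\big)\ge |F|-m$ whenever $(r-2)^2\ge 3$ and $m\le |F|$ — and the latter holds for every $2$-cluster. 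I would present the uniform argument first and then remark that it covers the exceptional families verbatim since each has $m\le|F|$ and satisfies the composition formulas.
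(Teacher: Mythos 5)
Your overall approach is the same as the paper's: use the two formulas from the second part of Lemma~\ref{lem:str} to express $w(F)=|\p{1}{F}|+\tfrac13|\pp{1}{2}{F}|$, reduce the target inequality to a constraint relating $|F|$ and $m$, handle small clusters by a uniform bound on $m$, and dispose of the exceptional $|F|\ge 9$ cases by appealing to the first part of Lemma~\ref{lem:str}. (The paper phrases the reduced inequality as $|F|\ge m+\tfrac{m-1}{r^2-4r+1}$, which is equivalent to your $(k-m)\big((r-2)^2-3\big)\ge m-1$.) However, there is a concrete arithmetic error at the pivotal step, and it causes your ``uniform'' argument to break.

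You evaluate the reduced expression
\[
-k+m+\tfrac13\bigl(1-m+(k-m)(r-2)^2\bigr)
\]
at $m=k$ and report the value $\tfrac13$. Substituting $m=k$ actually kills the $(r-2)^2$ term and gives $\tfrac{1-k}{3}$, which is negative for every $k\ge 2$. Combined with your (correct) observation that the expression is decreasing in $m$, this means the crude bound $m\le k$ is \emph{not} safe: at $m=k$ the inequality genuinely fails, so you cannot use monotonicity in $m$ to conclude. You need the actual constraint $m\le |F|-1$, valid because a $2$-cluster must contain at least one $\Merge{}{1}$-cluster with $\ge 2$ edges; with $m=k-1$ the expression is $\tfrac{(r-2)^2-k-1}{3}$, which is nonnegative precisely when $(r-2)^2\ge k+1$ --- that is, $r\ge 5$ and $|F|\le 8$ (noting $|F|\le 7$ by $8$-freeness). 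Your concluding paragraph then makes the matters worse by asserting that the compensation inequality holds ``whenever $(r-2)^2\ge 3$ and $m\le |F|$''; substituting, say, $r=4$, $k=5$, $m=4$ refutes this. The hypothesis needed is the sharper $(k-m)\bigl((r-2)^2-3\bigr)\ge m-1$, which is exactly what the paper isolates and verifies via the bound $m\le 7$ for small $F$ and the structural classification for $|F|\ge 9$. Your checks of the exceptional families (e.g.\ the $\mathcal S_i$ computation) are fine; it is only the blanket handling of $|F|\le 8$ that needs the correction from $m\le k$ to $m\le k-1$ together with the correct resulting threshold.
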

\begin{proof}
Suppose that $F$ is obtained by merging $m$ 1-clusters $T_1,\dots ,T_m\in \C M_1$ with $|T_i|=e_i$. Then by Lemma~\ref{lem:str}, we have 
$$|F|=\sum_{i\in [m]}e_i,\ |\p{1}{F}|=\sum _{i\in [m]}\bigg(e_i\binom{r}{2}-e_i+1\bigg)\ {\ \text{and}\ }|\pp{1}{2}{F}|\ge 1-m+\sum_{i\in [m]}(e_i-1)(r-2)^2.$$
Recall that
$$w(F):=\sum_{xy\in \binom{V(F)}{2}}w_F(xy)=|\p{1} {F}|+\frac{1}{3}\,|\pp{1}{2}{F}|.$$
By routine calculations, we have from the above that
\begin{equation}\label{eq:cal}
\begin{aligned}
2w(F)-(r^2-r)|F|&\ge (r^2-r-2)|F|+\frac{4m}{3}+\frac{2}{3}+\frac{2(r^2-4r+4)}{3}\cdot (|F|-m)-(r^2-r)|F| \\
&= \frac{2(r^2-4r+1)}{3}\,|F|+\frac{4m}{3}+\frac{2}{3}- \frac{2(r^2-4r+4)}{3}\,m\\
&= \frac{2}{3}(r^2-4r+1)\cdot(|F|-m)-\frac{2}{3}\,m+\frac{2}{3}.
\end{aligned}
\end{equation}
To see that the right hand side of (\ref{eq:cal}) is at least $0$, it suffices to verify the following:
\begin{equation}\label{eq:cal2}
    |F|\ge m+\frac{m-1}{r^2-4r+1}.
\end{equation}


Note that $|F|\geq m+1$, since there must exist an $i$-tree in the sequence with $i\ge 2$ according to the merging rule. Hence given that $r\ge 5$, if $m\le 7$ then we derive that
$$|F|\ge m+1\ge m+\frac{m-1}{6}\ge m+\frac{m-1}{r^2-4r+1}.$$
On the other hand, if $m\geq 8$, then $|F|\ge 9$. By Lemma~\ref{lem:str}, we conclude that $F\in \mathcal{S}_{m-1}$  (as any other family has $m<8$).
Thus,
$$|F|=2(m-1)+1\geq m+\frac{m-1}{r^2-4r+1}.$$

This finishes the proof of the upper bound for $r\geq 5$.
\end{proof}

\subsubsection{Weight functions for $r=4$}\label{sec:weight-4}

For a $2$-cluster $F$ and $uv\in \binom{V(G)}{2}$, we define the following functions $h_i^F$:
\begin{enumerate}[label={}]
  \item $h_1^F(uv) :=\left\{\begin{array}{ll}
1 & \text{ if } 1\in \CI_{F}(uv), \\
0 & \text{ otherwise,}
\end{array}\right.$ \hspace{17pt} $h_2^F(uv) :=\left\{\begin{array}{ll}
1/3 & \text{ if } 2\in \CI_{F}(uv), \\
0 & \text{ otherwise, }
\end{array}\right.$ 
  \item $h_3^F(uv) :=\left\{\begin{array}{ll}
1/2 & \text{ if } 2,4\in \CI_{F}(uv), \\
0 & \text{ otherwise,}
\end{array}\right.$\ \  $h_4^F(uv) :=\left\{\begin{array}{ll}
1/2 & \text{ if } 3,4,5\in \CI_{F}(uv) \text{ and } uv\notin \p{1}{G} , \\
0 & \text{ otherwise,}
\end{array}\right.$ 
  \item $h_5^F(uv) :=\left\{\begin{array}{ll}
1 & \text{ if } 3,5,6\in \CI_{F}(uv) \text{ and } uv\notin \p{1}{G}, \\
0 & \text{ otherwise.}
\end{array}\right.$
\end{enumerate}
We assign the weight 
\[
w_{F}(uv):=\max_{1\le i\le 5}\ h_i^F(uv).
\]

\begin{claim}\label{cl:weight5}
    For every $uv\in \binom{V(G)}{2}$, it holds that $w(uv)\le 1$.
\end{claim}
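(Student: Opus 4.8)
The plan is to fix a pair $uv\in\binom{V(G)}{2}$ and bound the total weight $w(uv)=\sum_{F\ni uv} w_F(uv)$ by splitting the contributing 2-clusters according to which of the five functions $h_i^F$ attains the maximum. First I would dispose of the easy case: if some 2-cluster $F$ has $uv\in\p{1}{F}$ (so $h_1^F(uv)=1$ and $w_F(uv)=1$), then by the merging rule defining $\C M_2$ no other 2-cluster can $2$-claim $uv$ (they would have been merged with $F$ via $uv$), and moreover no other 2-cluster can $1$-claim $uv$ either; hence $h_3,h_4,h_5$ also vanish for every other $F'$ since each of those requires $2\in\CI_{F'}(uv)$ or $3\in\CI_{F'}(uv)$, and a $3$-claim would again force a merge (as $3$ edges on $\le 3r-4$ vertices together with a diamond $2$-claiming $uv$... — more carefully, one argues directly that a pair $1$-claimed by $F$ and $3$-claimed by $F'$ gives an $8$-configuration once $F$ has enough edges, using $\cG{r}{8}$-freeness and Lemma~\ref{lm: 2}). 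So in this case $w(uv)=1$ and we are done.

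The main case is when no 2-cluster $1$-claims $uv$, so every contributing $F$ has $w_F(uv)\in\{1/3,1/2,1\}$ coming from $h_2,h_3,h_4$ or $h_5$. The key structural input is Lemma~\ref{lm: 2}: since the 2-clusters are pairwise edge-disjoint subgraphs of the $\C F^{(4)}(8r-14,8)$-free graph $G$, the sumset $\sum_i \CI_{F_i}(uv)$ omits $8$. I would use this to bound how many clusters can simultaneously have $2\in\CI_F(uv)$, how many can have $3\in\CI_F(uv)$, etc. For instance, at most three clusters can have $2\in\CI_F(uv)$ (four would give $2+2+2+2=8$); combined with the $h$-values this already caps the contribution from "light" clusters. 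The delicate part is handling clusters where $h_4$ or $h_5$ fires, i.e. where $uv$ is $3$-claimed (and $4,5$- or $5,6$-claimed) but \emph{not} in $\p{1}{G}$ — here the weight is $1/2$ or even $1$. One shows that a cluster $F$ with $h_5^F(uv)=1$ forces, via $3,5,6\in\CI_F(uv)$ and Lemma~\ref{lm: 2}, that no other cluster contributes anything except possibly one more with a small index (since $6+2=8$, $5+3=8$, $3+5=8$ are all forbidden in the sumset), so that the total is still $\le 1$; and a configuration of clusters each contributing $1/2$ can have at most two such clusters because the relevant claim-values sum past $8$ if there were more. Throughout, one also needs the structural description of exceptional 2-clusters from Lemma~\ref{lem:str} (the families $\mathcal A,\mathcal B,\mathcal C_1,\mathcal C_2,\mathcal E,\mathcal F,\mathcal S_i$) to know exactly which sets $\CI_F(uv)$ can occur — in particular that $\{2,4\}$, $\{3,4,5\}$ and $\{3,5,6\}$ arise only in specific clusters, limiting the overlap.

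I expect the main obstacle to be the bookkeeping in the mixed case: ruling out the few numerically-borderline combinations such as one cluster with $h_3=1/2$ plus clusters contributing $1/3$ each, or two clusters with $h_4=1/2$ plus an extra $1/3$, where a naive count gives exactly $1$ or slightly more and one must invoke either Lemma~\ref{lm: 2} more carefully (noting that a cluster $2$-claiming $uv$ that is \emph{not} merged with a cluster $3$-claiming $uv$ already imposes extra constraints, since $\OMerge{}{j}$-mergeability was only tested for $j\in\{1,2\}$ when building $\C M_2$, so a $3$-claim and a $2$-claim in two different clusters is genuinely possible and must be bounded by arithmetic alone) or the precise cluster structure from Lemma~\ref{lem:str}. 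Once every case is checked, the bound $w(uv)\le 1$ follows, completing the proof of the claim.
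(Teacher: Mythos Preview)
Your plan has the right skeleton but contains a genuine gap and an unnecessary complication.

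The gap is in your treatment of the case $uv\in\p{1}{F}$ for some cluster $F$. You try to argue that $h_4^{F'},h_5^{F'}$ vanish for other clusters $F'$ because a $3$-claim by $F'$ would ``force a merge'' or produce an $8$-configuration. Neither holds in general: the $\C M_2$ merging rule only tests $1$- and $2$-claims, and a single edge $F$ together with a $3$-claiming configuration in $F'$ gives only $1+3=4$ in the sumset, nowhere near~$8$. The correct (and trivial) reason is already built into the definitions: both $h_4^{F'}$ and $h_5^{F'}$ carry the \emph{global} side-condition $uv\notin\p{1}{G}$, which fails the moment \emph{any} cluster $1$-claims~$uv$. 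Symmetrically, when $h_5^{F_1}(uv)=1$ fires, that same side-condition kills $h_1^{F'}$ for every $F'$, and then $6+2=8$ and $5+3=8$ in Lemma~\ref{lm: 2} eliminate $h_2,h_3,h_4,h_5$ for all other clusters, so the total is exactly~$1$ with no leftover terms.

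The unnecessary complication is your appeal to Lemma~\ref{lem:str}. The structural classification of exceptional $2$-clusters is nowhere needed for this claim; the entire argument runs on Lemma~\ref{lm: 2} and the merging rules alone. In particular, your anticipated ``mixed-case bookkeeping'' dissolves once you notice that both $h_3$ and $h_4$ require $4\in\CI_F(uv)$. Since $4+4=8$, at most one cluster can contribute weight $1/2$, and then $4+2+2=8$ leaves room for at most one further cluster of weight~$1/3$, giving $1/2+1/3<1$. The scenario ``two clusters with $h_4=1/2$'' that you worry about is therefore impossible outright. With these two observations the three cases $w_{F_1}(uv)\in\{1,1/2,1/3\}$ each close in a couple of lines.
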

\begin{proof}
Given $uv\in \binom{V(G)}{2}$, let $F_1,\dots, F_s\in \C M_2$ be all $2$-clusters assigning positive weight to $uv$, ordered so that $w_{F_1}(uv)\ge w_{F_2}(uv)\ge \dots \ge w_{F_s} (uv)$. In the following proof, let $h_i$ mean $h_i^{F_1}$, the function coming from the 2-cluster $F_1$ (that assigns the maximum weight to $uv$).

\noindent\textbf{Case 1. $w_{F_1}(uv)=1.$}

There exists some $i\in \{1,5\}$ such that $w_{F_1}(uv)=h_i(uv)=1$. 

If $w_{F_1}(uv)=h_1(uv)=1$, then $1\in \CI_{F_1}(uv)$, which means that $uv\in \p{1}{G}$. Thus, given an arbitrary $2$-cluster $F\in \C M_2\setminus \{F_1\}$, it holds that $h^F_4(uv)=h^F_5(uv)=0$.  Note that $2\notin \CI_F(uv)$, since otherwise $F_1$ would merge with $F$ by our merging rule for $\C M_2$. Thus $h^F_2(uv)=h^F_3(uv)=0$. Analogously, by the merging rule for $\C M_1$, we have $1\notin \CI_{F}(uv)$ and $h^F_1(uv)=0$. Hence, $w_{F}(uv)=0$ and $w(uv)=w_{F_1}(uv)=1$.

If $w_{F_1}(uv)=h_5(uv)=1$, then $3,5,6\in \CI_{F_1}(uv)$ and $uv\notin \p{1}{G}$. For every other $2$-cluster $F\in \C M_2\setminus \{F_1\}$,  we know by Lemma~\ref{lm: 2} that $2,3\notin \CI_{F}(uv)$ and thus $h^F_2(uv)=h^F_3(uv)=h^F_4(uv)=h^F_5(uv)=0$. Since $uv\notin \p{1}{G}$, $h^F_1(uv)=0$ as well. In total, we have $w_F(uv)=0$ and $w(uv)=w_{F_1}(uv)=1$.

\noindent\textbf{Case 2. $w_{F_1}(uv)=1/2.$}

In this case, we know $w_{F_1}(uv)\in \{h_3(uv), h_4(uv)\}$. This means $4\in \CI_{F_1}(uv)$. For every other $2$-cluster $F\in \C M_2\setminus \{F_1\}$, we derive  by Lemma~\ref{lm: 2} that $4\notin \CI_{F}(uv)$, and thus $w_{F}(uv)\le h^F_2(uv)=1/3$. Again by Lemma~\ref{lm: 2}, there is at most one $2$-cluster of $\C M_2\setminus \{F_1\}$, say $F_2$, such that $2\in \CI_{F_2}(uv)$ and $w_{F_2}(uv)=h^{F_2}_2(uv)=1/3$. Therefore, $w(uv)\le w_{F_1}(uv)+w_{F_2}(uv)\le 1/2+1/3\le 1$.

\noindent\textbf{Case 3. $w_{F_1}(uv)=1/3.$}

In this case, we know $w_{F_1}(uv)=h_2(uv)$ which implies $2\in \CI_{F_1}(uv)$. By Lemma~\ref{lm: 2}, there are at most two $2$-clusters in $\C M_2\setminus \{F_1\}$ that $2$-claim the pair $uv$. Each of these clusters (if exists) contributes at most $1/3$ while all other clusters do not contribute anything to  the weight of $uv$. Thus $w(uv)\le w_{F_1}(uv)+2/3\le 1$.
\end{proof}


\begin{claim}\label{cl:ubr4}
   Let $r=4$. For all $F\in \C M_2$, we have $w(F)\ge 6\,|F|$.
\end{claim}

\begin{proof}
Let $h_i$ refer to $h_i^F$.
Assume that $F$ is obtained by merging $m$ 1-clusters $T_1,\dots ,T_m\in \C M_1$ with $|T_i|=e_i$. Let us first focus on $1$-claimed pairs $\p{1}{F}$ and \OutIn{1}{2}-claimed pairs $\pp{1}{2}{F}$. Each pair of $\p{1}{F}$ will contribute the weight $1$ (by the definition of $h_1$), and each pair of $\pp{1}{2}{F}$ contribute at least weight $1/3$ (by the definition of $h_2$). We start with determining those $2$-clusters $F$ such that the weights contributed by $\p{1}{F}\cup \pp{1}{2}{F}$ is sufficient, i.e.\ $w(F)\ge |\p{1}{F}|+|\pp{1}{2}{F}|/3\ge 6\,|F|$.

By Lemma~\ref{lem:str}, we have 
$$|F|=\sum_{i\in [m]}e_i,\ |\p{1}{F}|=\sum _{i\in [m]}(5e_i+1)\ {\ \text{and}\ }|\pp{1}{2}{F}|\ge 1-m+4\sum_{i\in [m]}(e_i-1).$$
Then,
\begin{equation}\label{eq:exception}
\begin{aligned}
w(F)=\sum_{xy\in \binom{V(F)}{2}}w_F(xy)&\ge |\p{1} {F}|+\frac{1}{3}\,|\pp{1}{2}{F}|\\
&\ge 5\,|F|+m+\frac{1}{3}\left(1-m+4|F|-4m\right) \\
&= \frac{19}{3}\,|F|-\frac{2}{3}\,m+\frac{1}{3}.
\end{aligned}
\end{equation}
Thus if $|F|\ge 2m-1$, then~(\ref{eq:exception}) is at least $6\,|F|$.

Now it is sufficient to consider the case where $|F|<2m-1$. We split the remaining discussion depending on the value of $m$.


~\\
\noindent\textbf{Case 1.} $m\le 2.$

This case is impossible. Indeed, by the definition of $2$-merging, we have at least one $1$-cluster of size at least $2$ in the sequence of forming $F$. However, then
$|F|\geq m+1\geq 2m-1$.

\noindent\textbf{Case 2.} $m=3.$

We know $4=m+1\le |F|<2m-1=5$. Thus $|F|=4$ and $F$ has a composition $(2,1,1)$ where a diamond $D\subseteq F$ \OutIn{1}{2}-claims two  pairs $x_1y_1,x_2y_2$ through which two $1$-trees merge to $D$. These pairs are distinct as otherwise the two 1-trees would be in the same 1-cluster.
Also, note that $2,4\in \CI_{F}(x_1y_1)\cap \CI_{F}(x_2y_2)$. Thus $w_F(x_iy_i)\ge h_3(x_iy_i)=1/2$, and

$$w(F)\ge \sum_{xy\in \p{1}{F}}w_F(xy)+w_F(x_1y_1)+w_F(x_2y_2)\ge 23+\frac12+\frac12=24=6\cdot 4.$$

\noindent\textbf{Case 3.} $m=4.$

In this case, we have $5=m+1\le |F|< 2m-1=7$. We split the proof into two cases depending on the size of $F$.

First, suppose that $|F|=5$. Then $F$ possesses a composition $(2,1,1,1)$. Consider an arbitrary $1$-tree $T$ and the diamond $D$ (\OutIn{1}{2}-claiming a pair in $T$) in the sequence. It is easy to see that $|\pp{12}{3}{F}|\ge|\pp{12}{3}{D\cup T}|=8$.  By $8$-freeness, at most 2 pairs of $\pp{12}{3}{F}$ lie in $\p{1}{G}$. Therefore, the remaining pairs $uv\in \pp{12}{3}{D\cup T}\setminus \p{1}{G}$ satisfy 
$3,4,5\in \CI_{F}(uv)$ and $w_{F}(uv)\ge h_4(uv)=1/2$. Hence, by Lemma~\ref{lem:str}, we get 
$$w(F)\geq \sum_{xy\in \p{1}{F}}w_F(xy)+\sum_{xy\in \pp{12}{3}{D\cup T}\setminus \p{1}{G}}w_F(xy)\geq 29+(8-2)\times\frac12=32>6\cdot 5,$$
as desired.

Thus it remains to consider the case $|F|=6$. Then the non-increasing composition of $F$ is $(2,2,1,1)$ or $(3,1,1,1)$.

Let us show that there is at least one flexible $1$-tree $T_0$ in~$F$. Indeed, for $(2,2,1,1)$, re-order the sequence starting from a $1$-tree $e$; if $e$ is not flexible edge of $F$, then the two diamonds $\OMerge{2}{1}$-merge with $e$ and the other $1$-tree is flexible. For $(3,1,1,1)$, each $1$-tree is flexible. So a flexible 1-tree $T_0$ exists.

Let $T\neq T_0$ be another $1$-tree in $F$ (which could be flexible), and let $D\subseteq F$ be a diamond $2$-claiming a pair of $\p{1}{T}$. As before, $|\pp{12}{3}{D\cup T}|=8$. At most one pair of $|\pp{12}{3}{D\cup T}|$ lies in $\p{1}{G}$ by $\cG{4}{8}$-freeness. 
Moreover, since $F$ has a flexible edge (namely, the one in $T_0$),  each pair $uv$ of $\pp{12}{3}{D\cup T}\setminus \p{1}{G}$ satisfies $5,6\in \CI_{F}(uv)$ and, hence, $w_{F}(uv)\ge h_5(uv)=1$. Together with Lemma~\ref{lem:str}, this implies that

$$w(F)\geq \sum_{xy\in \p{1}{F}}w_F(xy)+\sum_{xy\in \pp{12}{3}{D\cup T}\setminus \p{1}{G}}w_F(xy)\geq 34+7\times 1=41>6\cdot 6.$$

\noindent\textbf{Case 4.} $m=5.$

In this case, we have $6=m+1\le |F|< 2m-1=9$. By $8$-freeness, $|F|\neq 8$.

First suppose that $|F|=6$. Its non-increasing composition must be $(2,1,1,1,1)$. Here every $1$-tree of $F$ is flexible. Consider the diamond $D$ and a $1$-tree $T$ with $D$ $2$-claiming a pair in $T$. As before, $|\pp{12}{3}{D\cup T}|=8$ and, by $\cG{4}{8}$-freeness, there is at most one pair in $\pp{12}{3}{D\cup T}$ that lies in $\p{1}{G}$. 
By trimming one flexible edge from $F$ if needed, we know that each pair $uv\in \pp{12}{3}{D\cup T}\setminus \p{1}{G}$ satisfies $5,6\in \CI_{F}(uv)$. Hence, $w_{F}(uv)\ge h_5(uv)=1$ and by Lemma~\ref{lem:str},

$$w(F)\geq \sum_{xy\in \p{1}{F}}w_F(xy)+\sum_{xy\in \pp{12}{3}{D\cup T}\setminus \p{1}{G}}w_F(xy)\geq 35+7\times 1=42>6\cdot 6.$$

If $|F|=7$ with a composition $(2,2,1,1,1)$ or $(3,1,1,1,1)$, then we obtain at least two flexible $1$-trees $T_0,T'_0$. Indeed, for $(3,1,1,1,1)$, every $1$-tree is flexible. For $(2,2,1,1,1)$, every non-flexible $1$-tree would be $2$-claimed by both diamonds. By $\cG{4}{8}$-freeness, the number of non-flexible $1$-trees is at most $1$. Consider another $1$-tree $T\neq T_0,T'_0$ (could be flexible as well) and a diamond $D$ $2$-claiming a pair in~$T$. None of the pairs in $\pp{12}{3}{D\cup T}$ lie in $\p{1}{G}$ due to  $\cG{4}{8}$-freeness. 
By removing one or two flexible edges from $F$ if needed, we know that each pair $uv\in \pp{12}{3}{D\cup T}$ satisfies $5,6\in \CI_{F}(uv)$. Thus we have $w_F(uv)\ge h_5(uv)=1$ and 

$$w(F)\geq \sum_{xy\in \p{1}{F}}w_F(xy)+\sum_{xy\in \pp{12}{3}{D\cup T}}w_F(xy)\geq 40+8\times 1=48>6\cdot 7.$$


\noindent\textbf{Case 5.} $m\geq 6.$

In this case, we have $|F|\geq m+1\geq 7$. However when $|F|= 7$, the unique composition $(2,1,1,1,1,1)$ is impossible. Recall that each \OutIn{1}{2}-claimed pair in a $4$-uniform diamond can be used for merging a $1$-tree only once, as otherwise those $1$-trees would already have been merged when constructing $\C M_1$.
Nevertheless, the number of \OutIn{1}{2}-claimed pairs in a $4$-uniform diamond is at most $4$. So a diamond cannot be merged with five $1$-trees. 

Hence $|F|\geq 9$. By Lemma~\ref{lem:str}, we know that $F\in \mathcal{S}_i$ for some $i\in [4,18]$ and $m=i+1$, as any other configuration stated in Lemma~\ref{lem:str} with more than $9$ edges satisfies $m\leq 6$. However, in this case, we have
$$|F|=2i+1=2m-1,$$
which is a contradiction to our assumption that $|F|<2m-1$.

This finishes the proof, since we considered every possible 2-cluster $F\in \C M_2$.  
\end{proof}

\subsection{Putting all together}
\begin{proof}[Proof of the upper bound in Theorem~\ref{thm:main}] 
By Lemma~\ref{lem:ub}, it is enough to prove that the size of an arbitrary $\cG{r}{8}$-free $n$-vertex $r$-graph $G$ is at most ${\binom{r}{2}}^{-1}\binom{n}{2}$. As before, let $\C M_1$ (resp. $\C M_2$) denote the partition of $E(G)$ into $1$-clusters (resp. $2$-clusters).

For each $2$-cluster $F$ and every pair $xy\in\binom{V(F)}{2}$, we define weight functions $w_F(xy)$ as in Section~\ref{sec:weight-5} (for $r\ge 5$) or Section~\ref{sec:weight-4} (for $r=4$). By Claims~\ref{cl:w<=1}--\ref{cl:ubr4}, the inequality in~(\ref{eq:final_UB}) proves the desired upper bound.
\end{proof}


\hide{
\section{Concluding remarks}

In this paper, we made progress on the Brown--Erd\H{o}s--S\'os Problem with $k=8$ and $r\ge 3$. We provided the value $\frac{1}{r^2-r}$ for $\pi(r,8)$ when $r\ge 4$ and a lower bound $3/16$ for $\pi(3,8)$. We conjecture that this lower bound is exactly the right answer.

\begin{conj}
    $\pi(3,8)=\frac{3}{16}$.
\end{conj}
}

\section*{Acknowledgements}

Both authors were supported by ERC Advanced Grant 101020255. 

The authors are grateful to the anonymous referees for many useful comments.

~\


\bibliographystyle{abbrv}
\bibliography{ref}

@article{25LS,
    AUTHOR = {Letzter, Shoham and Sgueglia, Amedeo},
     TITLE = {On a problem of {B}rown, {E}rd{\H o}s and {S}\'{o}s},
   JOURNAL = {Proc. Amer. Math. Soc.},
  FJOURNAL = {Proceedings of the American Mathematical Society},
    VOLUME = {153},
      YEAR = {2025},
    NUMBER = {7},
     PAGES = {2729--2743}
}

@incollection{71BES,
    AUTHOR = {Brown, W. G. and Erd\H{o}s, P. and S\'{o}s, V. T.},
     TITLE = {Some extremal problems on {$r$}-graphs},
 BOOKTITLE = {New directions in the theory of graphs ({P}roc. {T}hird {A}nn
              {A}rbor {C}onf., {U}niv. {M}ichigan, {A}nn {A}rbor, {M}ich.,
              1971)},
     PAGES = {53--63},
 PUBLISHER = {Academic Press, New York--London},
      YEAR = {1973}
}

@article {19G,
    AUTHOR = {Glock, Stefan},
     TITLE = {Triple systems with no three triples spanning at most five
              points},
   JOURNAL = {Bull. Lond. Math. Soc.},
  FJOURNAL = {Bulletin of the London Mathematical Society},
    VOLUME = {51},
      YEAR = {2019},
    NUMBER = {2},
     PAGES = {230--236},
      ISSN = {0024-6093,1469-2120},
   MRCLASS = {05C65 (05C35)},
MRREVIEWER = {Luc\ Teirlinck},
       DOI = {10.1112/blms.12224},
       URL = {https://doi.org/10.1112/blms.12224}      ,
}

@article{GlockJoosKimKuhnLichevPikhurko24,
 title="On the {$(6,4)$}-problem of {Brown}, {Erd\H os} and {S\'os}",
 author={Glock, S. and Joos, F. and Kim, J. and {K\"uhn}, M. and Lichev, L. and Pikhurko, O.},
 year=2024,
 journal={Proc.\ Amer.\ Math.\ Soc. Series B},
 volume=11,
  pages="173--186"
}

@article{DelcourtPostle24,
    AUTHOR = {Delcourt, M. and Postle, L.},
     TITLE = {The limit in the {$(k+2,k)$}-problem of {B}rown, {Erd\H{o}s} and
              {S\'{o}s} exists for all {$k\geq 2$}},
   JOURNAL = {Proc. Amer. Math. Soc.},
    VOLUME = {152},
      YEAR = {2024},
     PAGES = {1881--1891}
     }

@article{23sh,
      title = {Degenerate {T}ur\'an densities of sparse hypergraphs {II}: a solution to the {B}rown-{E}rd{\H{o}}s-{S}\'{o}s problem for every uniformity}, 
      author={Chong Shangguan},
      Journal={SIAM J. Discr.\ Math.},
 year=2023,
volume=37,
pages="1920--1929"
}

@article{Rodl85,
 author="V. R{\"o}dl",
 title="On a Packing and Covering Problem",
 year=1985,
 journal="Eur.\ J.\ Comb",
 volume=5,
 pages="69--78"}

@article{GKLPS24,
      title={On the $(k+2,k)$-problem of {B}rown, {E}rd{\H{o}}s and {S}\'{o}s for $k=5,6,7$}, 
      author={Stefan Glock and Jaehoon Kim and Lyuben Lichev and Oleg Pikhurko and Shumin Sun},
      year={2024},
      JOURNAL = {arXiv:2403.04474, accepted by Canadian J Math}
}

@article{BennettCushmanDudek25,
    AUTHOR = {Bennett, P. and Cushman, R. and Dudek, A.},
     TITLE = {Generalized {R}amsey numbers at the linear and quadratic
              thresholds},
   JOURNAL = {Electron. J. Combin.},
    VOLUME = {32},
      YEAR = {2025},
     PAGES = {Paper No. 1.16, 18pp}
     }

@inproceedings{ES75,
  title={Problems and results on finite and infinite graphs},
  author={Erd{\H{o}}s, P.},
  booktitle={Recent advances in graph theory ({Proc.\ Second Czechoslovak Sympos., Prague}, 1974)},
  pages={183--192},
  year={1975},
}

@article{EG97,
  title={A variant of the classical {R}amsey problem},
  author={Erd{\H{o}}s, P. and Gy{\'a}rf{\'a}s, A.},
  journal={Combinatorica},
  volume={17},
  pages={459--467},
  year={1997},
  publisher={Springer}
}

@article{24GJKKL,
  title={Conflict-free hypergraph matchings},
  author={Glock, Stefan and Joos, Felix and Kim, Jaehoon and K{\"u}hn, Marcus and Lichev, Lyuben},
  journal={Journal of the London Mathematical Society},
  volume={109},
  number={5},
  pages={e12899},
  year={2024},
  publisher={Wiley Online Library}
}

@incollection {70ES,
    AUTHOR = {Erd\H os, P. and Simonovits, M.},
     TITLE = {Some extremal problems in graph theory},
 BOOKTITLE = {Combinatorial theory and its applications, {I}-{III} ({P}roc.
              {C}olloq., {B}alatonf\"ured, 1969)},
    SERIES = {Colloq. Math. Soc. J\'anos Bolyai},
    VOLUME = {4},
     PAGES = {377--390},
 PUBLISHER = {North-Holland, Amsterdam-London},
      YEAR = {1970},
   MRCLASS = {05C15},
  MRNUMBER = {300924},
MRREVIEWER = {W.\ G.\ Brown},
}

@article {21CJL,
    AUTHOR = {Conlon, David and Janzer, Oliver and Lee, Joonkyung},
     TITLE = {More on the extremal number of subdivisions},
   JOURNAL = {Combinatorica},
  FJOURNAL = {Combinatorica. An International Journal on Combinatorics and
              the Theory of Computing},
    VOLUME = {41},
      YEAR = {2021},
    NUMBER = {4},
     PAGES = {465--494},
      ISSN = {0209-9683,1439-6912},
   MRCLASS = {05C35},
  MRNUMBER = {4328719},
MRREVIEWER = {Nikolaos\ Fountoulakis},
       DOI = {10.1007/s00493-020-4202-1},
       URL = {https://doi.org/10.1007/s00493-020-4202-1},
}

@book {16AS,
    AUTHOR = {Alon, Noga and Spencer, Joel H.},
     TITLE = {The probabilistic method},
    SERIES = {Wiley Series in Discrete Mathematics and Optimization},
   EDITION = {Fourth},
 PUBLISHER = {John Wiley \& Sons, Inc., Hoboken, NJ},
      YEAR = {2016},
     PAGES = {xiv+375},
      ISBN = {978-1-119-06195-3},
   MRCLASS = {60-02 (05C80 05D40 60C05 60F10 60G42)},
  MRNUMBER = {3524748},
}

\end{document}